\documentclass[12pt,reqno]{amsart}
\pagestyle{myheadings}

\usepackage{amsmath,amssymb,amscd,amsthm,amscd,amsthm,amsfonts,mathrsfs,array}
\usepackage{graphicx,color}
\usepackage[all]{xy}
\usepackage{tikz}
\usetikzlibrary{intersections,calc,arrows.meta}
\usepackage[hidelinks]{hyperref}

\oddsidemargin -.12cm
\evensidemargin -.12cm
\textwidth 16cm

\newcommand{\R}{{\mathbb{R}}}
\newcommand{\Z}{{\mathbb{Z}}}
\newcommand{\C}{{\mathbb{C}}}

\newcommand{\F}{{\mathbb{F}}}
\newcommand{\Bl}{{\mathrm{Bl}_2(\C P^2)}}
\newcommand{\Bll}{{\mathrm{Bl}_3(\C P^2)}}
\newcommand{\ii}{{\mathbf{i}}}
\newcommand{\ee}{{\mathbf{e}}}
\newcommand{\fm}{{\mathfrak{m}}}
\newcommand{\cO}{{\mathcal{O}}}
\newcommand{\cE}{{\mathcal{E}}}

\newcommand{\cI}{{\mathcal{I}}}

\newcommand{\cV}{{\mathcal{V}}}

\newcommand{\del}{{\partial}}
\newcommand{\pr}{{\mathrm{pr}}}
\newcommand{\grad}{{\mathrm{grad}}}
\newcommand{\Int}{{\mathrm{Int}}}

 \newtheorem{thm}{Theorem}[section]
 \newtheorem{lem}[thm]{Lemma}
 \newtheorem{cor}[thm]{Corollary}

\theoremstyle{definition}


\begin{document}

\title{Homological mirror symmetry of toric Fano surfaces via Morse homotopy}
\author{Hayato Naknishi}
\address{Department of Mathematics and Informatics, Graduate School of Science and Engineering, Chiba University,
1-33 Yayoicho, Inage, Chiba, 263-8522 Japan.}
\email{hayato\underline{ }nakanishi@chiba-u.jp}
\date{}

\maketitle

\begin{abstract}
Strominger-Yau-Zaslow (SYZ) proposed a way of constructing mirror pairs as pairs of torus fibrations. We apply this SYZ construction to toric Fano surfaces as complex manifolds, and discuss the homological mirror symmetry, where we consider Morse homotopy of the moment polytope instead of the Fukaya category.
\end{abstract}

\tableofcontents

\section{Introduction.}
Mirror symmetry has fascinated many physicists and mathematicians. Mirror symmetry was first observed in Calabi-Yau manifolds. It has been extended to cases of non Calabi-Yau, e.g., Fano manifolds. Homological mirror symmetry conjectured by Kontsevich \cite{kon94} is a categorical formulation of mirror symmetry. This conjecture is a statement about the equivalence of two categories: the Fukaya category $Fuk(X)$ of a Calabi-Yau manifold $X$ and the derived category $D^b(Coh(\check{X}))$ of the coherent sheaves on a mirror Calabi-Yau manifold $\check{X}$. More precisely, the triangulated category induced by the Fukaya category is equivalent to $D^b(Coh(\check{X}))$. In general, we can construct a triangulated category from a given $A_\infty$ category \cite{BK, kon94}. When two triangulated categories are induced by two $A_\infty$-quasi-isomorphic $A_\infty$ categories, then the triangulated categories are equivalent to each other \cite{seidel}. Therefore, when $D^b(Coh(\check{X}))$ is obtained from an $A_\infty$ category, we can discuss the homological mirror symmetry conjecture by considering whether underlying $A_\infty$ categories are $A_\infty$-quasi-isomorphic to each other.\par
Strominger-Yau-Zaslow proposed a construction of mirror pair in \cite{SYZ}. When a given Calabi-Yau manifold is equipped with a torus fibration, we can obtain a mirror Calabi-Yau manifold as a dual torus fibration. Kontsevich-Soibelman \cite{KoSo:torus} proposed a framework to systematically prove the homological mirror symmetry via Morse homotopy for dual torus fibrations over a closed manifold without singular fibers.
 Fukaya-Oh \cite{FO} proved that the category $Mo(B)$ of Morse homotopy on $B$ is equivalent to the Fukaya category $Fuk(T^*B)$ of the cotangent bundle $T^*B$, and Kontsevich-Soibelman's approach is based on this result.\par
Fano surfaces are also called del Pezzo surfaces, and the homological mirror symmetry of toric del Pezzo surfaces is discussed by Auroux-Katszarkov-Orlov \cite{AKO} and Ueda \cite{U}. In these papers, they consider a Fukaya-Seidel category \cite{seidel} corresponding to a Landau-Ginzburg potential of toric del Pezzo surfaces as the symplectic side. \par
In this paper, we apply SYZ construction to toric Fano surfaces as complex manifolds and discuss the homological mirror symmetry. In this situation, we consider Morse homotopy of the moment polytope instead of the Fukaya category. SYZ picture is useful to express the mirror functor explicitly, which is why we use SYZ construction instead of the Landau-Ginzburg potential.  Futaki-Kajiura \cite{fut-kaj1,fut-kaj2} proved the homological mirror symmetry for $\C P^2,\ \C P^1\times\C P^1$ and $\F_1$. They defined the category $Mo(P)$ of weighted Morse homotopy as a generalization of the weighted Fukaya-Oh category given in \cite{KoSo:torus} to the case that the base manifold has boundaries and critical points may be degenerate. However, for other toric Fano surfaces, the cleanness of the Lagrangian intersections and the conditions of the grading are not satisfied. So we need to improve the notion of $Mo(P)$ so that it can be applied well to other toric Fano surfaces. In this paper, we give a modification of the category $Mo(P)$ and prove the homological mirror symmetry for the blow-ups of $\C P^2$ at two and three points. We expect that we can discuss the homological mirror symmetry for many other toric Fano manifolds by using this modified category $Mo(P)$.\par
This paper is organized as follows. In section 2, we recall the SYZ fibration set-up and explain a plan for proving the homological mirror symmetry of toric Fano surfaces. We explicitly prove the homological mirror symmetry of the blow-up of $\C P^2$ at two points in section 3 and of the blow-up of $\C P^2$ at three points in section 4. In Appendix, we give lists of compositions of morphisms in $Mo(P)$ and non-trivial gradient trees.
\par
\noindent
{\bf Acknowledgments.}
I would like to thank my supervisor Professor Hiroshige Kajiura for various advices in writing this paper. I am also grateful to Professor Masahiro Futaki for helpful discussions. This work was supported by JST SPRING, Grant Number JPMJSP2109.

\section{Preliminaries.}
In this section, we briefly review the SYZ torus fibration set-up \cite{SYZ,LYZ,leung05} and the category $Mo(P)$ of weighted Morse homotopy \cite{KoSo:torus,fut-kaj1,fut-kaj2}. We follow the convention of \cite{hk:fukayadeform,fut-kaj1,fut-kaj2}. In subsection \ref{MM}, we construct a torus bundle and its dual torus bundle. We explain Lagrangian sections in subsection \ref{glag} and holomorphic line bundles in subsection \ref{line}. In subsection \ref{lineDG}, we define a DG categories consisting of holomorphic line bundles given in subsection \ref{line}. In subsection \ref{Mo(P)}, we modify the category $Mo(P)$ of weighted Morse homotopy given in \cite{fut-kaj1}. In subsection \ref{plan}, we explain a plan of proving the homological mirror symmetry of toric Fano surfaces.

\subsection{Torus bundle $M$ and dual torus bundle $\check{M}$.}\label{MM}
Let $B$ be an $n$-dimensional smooth manifold. A smooth manifold $B$ is called {\em affine} if $B$ has an open covering $\{U_{\lambda}\}_{\lambda \in \Lambda}$ such that the coordinate transformation is affine. This means that, for any $U_{\lambda}$ and $U_{\mu}$ such that $U_{\lambda} \cap U_{\mu} \neq \emptyset$, the coordinate systems $x_{(\lambda)} := (x_{(\lambda)}^1,\dots,x_{(\lambda)}^n)^t$ and $x_{(\mu)} := (x_{(\mu)}^1,\dots,x_{(\mu)}^n)^t$ are related to each other by
\begin{equation*}
x_{(\mu)} = \varphi_{\lambda\mu}x_{(\lambda)} + \psi_{\lambda\mu}
\end{equation*}
with some $\varphi_{\lambda\mu}\in GL(n;\R)$ and $\psi_{\lambda\mu}\in\R^n$. If in particular $\varphi_{\lambda\mu}\in GL(n;\Z)$ for any $U_{\lambda} \cap U_{\mu}$ then $B$ is called {\em tropical}. We omit the index $(\lambda)$ when no confusion may occur. For simplicity, we assume that all nonempty intersections of $U_\lambda$'s are contractible. We call $B$ {\em Hessian} when it is equipped with metric $g$ locally expressed as
\begin{equation*}
g_{ij} = \frac{\del^2 \phi}{\del x^i \del x^j}
\end{equation*}
for some smooth function $\phi$. Hereafter we assume that $(B,g)$ is a tropical and Hessian manifold.\par
Using the metric $g$ on $B$, we define the dual affine coordinates as follows: 
since  $\sum_{j=1}^ng_{ij}dx^j$ is a closed form, there exists a function $x_i:=\phi_i$ of $x$ for each $i=1,\dots,n$ such that
\begin{equation*}
dx_i=\sum_{j=1}^ng_{ij}dx^j.
\end{equation*}
We thus obtain the dual affine coordinates $\check{x}^{(\lambda)}=(x_1^{(\lambda)},\dots,x_n^{(\lambda)})^t$.\par
For each open set $U=U_\lambda$, let us denote by $(y^1,\dots,y^n)^t$ the fiber coordinates of $TB|_{U}$ and $(y_1,\dots,y_n)$ the fiber coordinates of $T^*B|_U$. The tangent bundle $TB$ is a complex manifold, where $z^i := x^i + \ii y^i$'s form the complex coordinates. The cotangent bundle $T^*B$ has a canonical symplectic form $\omega_{T^*B}=\sum_{i=1}^ndx^i\wedge dy_i$. Furthermore, we can equip $TB$ with the symplectic form $\omega_{TB}:=\sum_{i,j=1}^ng_{ij}dx^i\wedge dy^j$ and $T^*B$ with the complex structure given by $z_i:=x_i + \ii y_i$'s. These structures give the K\"ahler structures on both $TB$ and $T^*B$. Also, the symplectic form $\omega_{TB}$ coincides with the pullback of the symplectic form $\omega_{T^*B}$ by the diffeomorphism $TB\simeq T^*B$.

For an $n$-dimensional tropical Hessian manifold $B$, we consider $\Z^n$-action on $TB$ and $T^*B$. This action of $(i_1,\dots,i_n)\in\Z^n$ is defined by $y^j\mapsto y^i + 2\pi i_j$ and $y_j\mapsto y_j +2\pi i_j$, respectively. This action is well-defined because the transition functions of $TB$ and $T^*B$ are the elements of $GL(n;\Z)$. Therefore we can consider the quotients of this action to get a pair of K\"ahler manifolds $M:=TB/\Z^n$ and $\check{M}:=T^*B/\Z^n$ and dual torus fibrations ( see Figure \ref{tfdtf}).
\begin{figure}[h]
\begin{equation*}
  \xymatrix{
   M \ar[rd]_\pi& & \check{M} \ar[ld]^{\check{\pi}}  \\
   & B &  \\   
  } 
\end{equation*}
\caption{The torus fibration and the dual torus fibration}
\label{tfdtf}
\end{figure}

\subsection{Lagrangian sections of $M$.}\label{glag}
We fix a tropical open covering $\{U_\lambda\}_{\lambda\in\Lambda}$. Let $\underline{s}:B\to M$ be a section of $M \to B$. Locally, we may regard $\underline{s}$ as a section of $TB \simeq T^*B$. Then, $\underline{s}$ is locally described by a collection of functions as 
\begin{equation*}
y^i_{(\lambda)} = s^i_{(\lambda)}(x)
\end{equation*}
on each $U_\lambda$. On $U_\lambda\cap U_\mu$, these local expressions are related to each other by
\begin{equation}\label{s}
s^i_{(\mu)}(x) = s^i_{(\lambda)}(x) + 2\pi I_{\lambda\mu}
\end{equation}
for some $I_{\lambda\mu}\in\Z^n$. Here, $x$ may be identified with either $x_{(\lambda)}$ or $x_{(\mu)}$. Also, $s^i_{(\lambda)}(x)$ and $s^i_{(\mu)}(x)$ are expressed by the common coordinates $y_{(\lambda)}$ or $y_{(\mu)}$. This transformation rule automatically satisfies the cocycle condition
\begin{equation}\label{I}
I_{\lambda\mu} + I_{\mu\nu} + I_{\nu\lambda} =0
\end{equation}
for $U_\lambda\cap U_\mu\cap U_\nu\neq\emptyset$. We denote by $s$ such a collection $\{s_{(\lambda)}:U_\lambda\to TB|_{U_\lambda}\}_{\lambda\in\Lambda}$ which satisfies the transformation rule (\ref{s}) and the cocycle condition (\ref{I}). \par
Now, under our assumption $TB\simeq T^*B$, we can check whether the graph of a section $\underline{s}:B\to M$ is Lagrangian or not in $T^*B$. The section $\underline{s}:B\to M$ is regarded as a section of $T^*B$ by setting $y_i=\sum_{j=1}^ng_{ij}y^j=\sum_{j=1}^ng_{ij}s^j$, from which one has
\begin{equation*}
\sum_{i=1}^n y_idx^i = \sum_{i=1}^n\left(\sum_{j=1}^ng_{ij}s^j\right)dx^i = \sum_{j=1}^ns^jdx_j.
\end{equation*}
Thus, the graph of the section $\underline{s}:B\to M$ is Lagrangian if and only if there exists a function $f$ such that $df=\sum_{j=1}^ns^jdx_j$ locally. The gradient vector field of such function $f$ is of the form:
\begin{equation}\label{gra}
\grad(f) = \sum_{i,j}\frac{\del f}{\del x^j}g^{ji}\frac{\del}{\del x^i} = \sum_i\frac{\del f}{\del x_i}\frac{\del}{\del x^i}.
\end{equation}

\subsection{Holomorphic line bundles over $\check{M}$.}\label{line}
Consider a section $\underline{s}:B\to M$ and express it as a collection $\{s_{(\lambda)}:U_\lambda\to TB|_{U_\lambda}\}_{\lambda\in\Lambda}$ of local functions. We define a line bundle $V$ with a $U(1)$-connection on the mirror manifold $\check{M}$ associated to $\underline{s}$. We set the covariant derivative locally as
\begin{equation*}
D := d - \frac{\ii}{2\pi}\sum_{i=1}^ns^i(x)dy_i.
\end{equation*}
The curvature of this covariant derivative is 
\begin{equation*}
D^2 = \frac{\ii}{2\pi}\sum_{i,j=1}^n \frac{\del s^i}{\del x_j}dx_j\wedge dy_i.
\end{equation*}
The $(0,2)$-part vanishes if and only if the matrix $\{\frac{\del s^i}{\del x_j}\}$ is symmetric, which is the case when there exists a locally function $f$ such that $df=\sum_{i=1}s^idx_i$. Thus, the condition that $D$ defines a holomorphic line bundle on $\check{M}$ is equivalent to that the graph of $\underline{s}$ is Lagrangian in $M$. We define the transition functions of $V$ by
\begin{equation*}
\psi_{(\mu)} = e^{\ii I_{\lambda\mu}\cdot\check{y}}\psi_{(\lambda)}
\end{equation*}
for local expressions $\psi_{(\lambda)},\psi_{(\mu)}$ of a smooth section of $V$ on $U_\lambda\cap U_\mu\neq\emptyset$, where $I_{\lambda\mu}\cdot\check{y}:=\sum_i^ni_jy_j$ for $I_{\lambda\mu}=(i_1,\dots,i_n)^t$. Then the covariant derivative is defined globally.

\subsection{DG categories $\cV$ and $DG(\check{X})$ of holomorphic line bundles.}\label{lineDG}
Now, let $\check{X}$ be a smooth compact toric manifold as the complex side. Consider the complement of toric divisors $\check{M} := \check{X}\backslash\mu^{-1}(\del P)$ for the moment map $\mu:\check{X}\to P\subset\R^n$. By fixing an appropriate structure of the Hessian manifold on $B:=\Int(P)$, we get an affine torus fibration on $\check{M}\to B$ whose K\"ahler structure is induced by that of $\check{X}$. Applying the construction in the previous section, we get the dual torus fibration $M\to B$ with a K\"ahler structure on the total space.\par
First, we define the DG category $\cV$ associated with $\check{M}$. The objects are holomorphic line bundles $V$ with $U(1)$-connections
\begin{equation}\label{connection}
D := d - \frac{\ii}{2\pi}\sum_{i=1}^ny^i(x)dy_i.
\end{equation}
These line bundles $(V,D)$ are associated to the Lagrangian section $\underline{y}:B\to M$. For two objects $(V_a,D_a),(V_b,D_b)\in\cV$, the space $\cV((V_a,D_a),(V_b,D_b))$ of morphisms is defined by
\begin{equation*}
\cV((V_a,D_a),(V_b,D_b)) := \Gamma(V_a,V_b)\otimes_{C^\infty(\check{M})}\Omega^{0,*}(\check{M}),
\end{equation*}
where $\Gamma(V_a,V_b)$ is the space of homomorphisms from $V_a$ to $V_b$ and $\Omega^{0,*}(\check{M})$ is the space of anti-holomorphic differential forms on $\check{M}$. The space $\cV((V_a,D_a),(V_b,D_b))$ is a $\Z$-graded vector space whose $\Z$-grading is defined by the degree of the anti-holomorphic differential forms. Let  $\cV^r((V_a,D_a),(V_b,D_b))$ denote the degree $r$ part. We define a linear map $d_{ab}:\cV^r((V_a,D_a),(V_b,D_b))\to\cV^{r+1}((V_a,D_a),(V_b.D_b))$ as follows. We decompose $D_a$ into its holomorphic part and anti-holomorphic part $D_a= D_a^{(1,0)} + D_a^{(0,1)}$, and set a $d_a:=2D_a^{(0,1)}$. Then, for $\psi_{ab}\in\cV^r((V_a,D_a),(V_b,D_b))$, we set
\begin{equation*}
d_{ab}(\psi_{ab}) := d_b\psi_{ab} -(-1)^r\psi_{ab}d_a.
\end{equation*}
This linear map $d_{ab}:\cV^r((V_a,D_a),(V_b.D_b))\to\cV^{r+1}((V_a,D_a),(V_b,D_b))$ satisfies $d_{ab}^2=0$ since $(V_a,D_a)$ and $(V_b,D_b)$ are holomorphic line bundles. The product structure $m:\cV((V_a,D_a),(V_b,D_b))\otimes\cV((V_b,D_b),(V_c,D_c))\to\cV((V_a,D_a),(V_b,D_c))$ is defined by the composition of homomorphisms of line bundles together with the wedge product for the anti-holomorphic differential forms.\par
Next, we define the DG category $DG(\check{X})$ consisting of holomorphic line bundles on the toric manifold $\check{X}$. For line bundle $V$ on $\check{X}$, we take a holomorphic connection $D$ whose restriction to $\check{M}$ is isomorphic to a line bundle on $\check{M}$ with connection of the form
\begin{equation*}
d-\frac{\ii}{2\pi}\sum_{i=1}^n y^i(x)dy_i.
\end{equation*}
We set the objects of $DG(\check{X})$ as such pairs $(V,D)$. The space $DG(\check{X})((V_a,D_a),(V_b,D_b))$ of morphisms is defined as the $\Z$-graded vector space whose degree $r$ part is given by
\begin{equation*}
DG^r(\check{X})((V_a,D_a),(V_b,D_b)) := \Gamma(V_a,V_b)\otimes_{C^\infty(\check{X})}\Omega^{0,r}(\check{X}),
\end{equation*}
where $\Gamma(V_a,V_b)$ is the space of smooth bundle maps from $V_a$ to $V_b$. The composition of morphisms and the DG structure is defined in a similar way as that in $\cV$ above.\par
We then have a faithful embedding $\cI:DG(\check{X})\to\cV$ by restring line bundle on $\check{X}$ to $\check{M}$. We define $\cV'$ to be the image $\cI(DG(\check{X}))$ of $DG(\check{X})$ under $\cI$.

\subsection{The category $Mo(P)$ of weighted Morse homotopy.}\label{Mo(P)}
Next, we consider the symplectic side. From the homological mirror symmetry view point, what we should discuss is not $Fuk(M)$, but a kind of Fukaya category $Fuk(\bar{M})$ of a torus fibration over $P=\bar{B}$. As an intermediate step, we consider the category $Mo(P)$ of weighted Morse homotopy for the moment polytope $P$. This $Mo(P)$ is the generalization of the weighted Fukaya-Oh category given in \cite{KoSo:torus} to the case where the base manifold has boundaries and critical points may be degenerate.\par
Before we give a modification of the category $Mo(P)$ defined in \cite{fut-kaj1}, we define a slightly weakened notion of the clean intersection. Let $L$ and $L'$ be Lagrangian submanifolds. We say that the intersection $L\cap L'$ is {\em generically clean} if $L$ and $L'$ intersect with each other cleanly at any point in an open dense subset of $L\cap L'$. Next, we define the category $Mo(P)$.
\begin{itemize}
\item The objects : The objects are the Lagrangian sections $L$ of $\pi:M\to B$ which corresponds to objects of $\cI(DG(\check{X}))\subset\cV$ described in subsection \ref{lineDG}\footnote{We can also interpret this condition as a condition of potential functions $f_L$ called the {\em growth condition} defined in \cite[Definition 3.1]{Chan}.}. We can extend $L$ to a section on an open subset $P\subset\tilde{B}\subset\R^n$ smoothly. Note that there exists a function $f_L$, at least in $B$, such that $L$ is the graph of $df_L$, where the Lagrangian section $L$ is regarded as the gradient vector field $\sum\frac{\del f}{\del x_i}\frac{\del}{\del x^i}=\grad(f_L)\in\Gamma(TB)$ of $f_L$ as we see in (\ref{gra}).
\item The space of morphisms : For a given two objects $L,L'\in Mo(P)$, we assume that the intersection $L\cap L'$ forms a topological manifold and is generically clean. The space $Mo(P)(L,L')$ is the $\Z$-grading vector space spanned by the connected components $V$ of $\pi(L\cap L')\subset P$ which satisfies the following conditions:
\begin{description}
\item[(M1)] For each connected component $V\subseteq\pi(L\cap L')$, the dimension of the stable manifold $S_v\subset\tilde{B}$ of the gradient vector field $-\grad(f_L-f_{L'})$ with a generic point $v\in V$ is constant. Then we define the degree of $V$ by $|V|:=\dim(S_v)$
\item[(M2)] There exists a point $v\in V$ which is generic and is an interior point of $S_v\cap P\subset S_v$.
\end{description}

\item $A_\infty$-structure : We only explain $\fm_2$ because of the following reasons: firstly, the Morse homotopy for the toric Fano surfaces is minimal. Secondly, the set of objects $\cE$ we compute later forms strongly exceptional collection in $Tr(Mo_\cE(P))$ and therefore we do not need to compute $\fm_k$ with $k\geq3$ to compute $Tr(Mo_\cE(P))$. For more details see \cite{fut-kaj1,hk:hpt-hms}. Take a triple $(L_1,L_2,L_3)$, connected components of the intersections $V_{12}\subseteq \pi(L_1\cap L_2), V_{23}\subseteq\pi(L_2\cap L_3), V_{13}\subseteq\pi(L_1\cap L_3)$. Let $\mathcal{GT}(v_{12},v_{23};v_{13})$ be the set of trivalent gradient trees starting at $v_{12}\in V_{12}, v_{23}\in V_{23}$ and ending at $v_{13}\in V_{13}$. Define $\mathcal{GT}(V_{12},V_{23};V_{13}) := \cup_{v_{12}\in V_{12},v_{23}\in V_{23},v_{13}\in V_{13}}\mathcal{GT}(v_{12},v_{23};v_{13})$ and $\mathcal{HGT}(V_{12},V_{23};V_{13}):=\mathcal{GT}(V_{12},V_{23};V_{13})/\sim$, where $\sim$ is smooth homotopy. This set become a finite set when $|V_{13}|=|V_{12}|+|V_{23}|$ and therefore we define the composition $\fm_2$ of morphisms by
\begin{equation*}
\begin{split}
\fm_2:&Mo(P)(L_1,L_2)\otimes Mo(P)(L_2,L_3)\longrightarrow Mo(P)(L_1,L_3)\\
&(V_{12},V_{23}) \longmapsto \sum_{\substack{V_{13}\in Mo(P)(L_1,L_3)\\|V_{13}|=|V_{12}|+|V_{23}|}}\sum_{[\gamma]\in \mathcal{HGT}(V_{12},V_{23};V_{13})}e^{-A(\Gamma)}V_{13},
\end{split}
\end{equation*}
where $A(\Gamma)\in[0,\infty]$ is the symplectic area of the piecewise smooth disc in $\pi^{-1}(\Gamma(T))$ \cite{KoSo:torus}.
\end{itemize}

\subsection{Homological mirror symmetry of toric Fano surfaces.}\label{plan}
For a toric Fano surface $\check{X}$, the derived category $D^b(Coh(\check{X}))$ of the coherent sheaves has a full strongly exceptional collection $\cE$ of line bundles\cite{hille-perling11,EL}. This means that $\cE$ generates $D^b(Coh(\check{X}))$ in the sense that
\begin{equation*}
D^b(Coh(\check{X})) \simeq Tr(DG_\cE(\check{X})),
\end{equation*}
where $DG_\cE(\check{X})$ is the full DG subcategory of $DG(\check{X})$ consisting of $\cE$ and $Tr$ is the twisted complexes construction by Bondal-Kapranov \cite{BK} and Kontsevich \cite{kon94}. Also, we have a DG-quasi-isomorphism
\begin{equation*}
DG_\cE(\check{X})\overset{\sim}\to\cV_\cE'=\cI(DG_\cE(\check{X})),
\end{equation*}
where $\cI:DG(\check{X})\to\cV$ is the faithful functor which preserves the smoothness of the bundle maps on toric divisor. We denote the collection of Lagrangian sections corresponding to the exceptional collection $\cE$ by the same symbol $\cE$. We denote by $Mo_\cE(P)$ the full subcategory of $Mo(P)$ consisting of $\cE$.\par
Then the main result of this paper is the following:
\begin{thm}\label{main}
Let $\check{X}$ be a toric Fano surface, $P$ be the moment polytope of $\check{X}$ and $\cE$ be the full strongly exceptional collection of the derived category $D^b(Coh(\check{X}))$ of coherent sheaves on $\check{X}$.
Then we have a DG-quasi-isomorphisms
\begin{equation*}
Mo_\cE(P) \simeq \cV_\cE' \simeq DG_\cE(\check{X}),
\end{equation*}
where $Mo_\cE(P)$ is the full subcategory of $Mo(P)$ consisting of the collection  of Lagrangian sections mirror to $\cE$.
\end{thm}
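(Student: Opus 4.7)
The plan is to reduce the theorem to the single DG-quasi-isomorphism $Mo_\cE(P)\simeq \cV_\cE'$, since the other equivalence $\cV_\cE'\simeq DG_\cE(\check{X})$ is already provided by the faithful embedding $\cI$ of subsection \ref{lineDG}. Up to isomorphism the toric Fano surfaces are $\C P^2$, $\C P^1\times\C P^1$, $\F_1$, $\Bl$ and $\Bll$; the first three are already settled in \cite{fut-kaj1,fut-kaj2}, so the only remaining work is an explicit case-by-case verification for $\Bl$ and $\Bll$, which will be carried out in sections 3 and 4.

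For each of these two surfaces I would proceed in four steps. First, I fix a full strongly exceptional collection $\cE$ of line bundles on $\check{X}$ following \cite{hille-perling11} and realize every element as a Lagrangian section $L_a$ of $\pi:M\to B$ through the dictionary of subsections \ref{glag}--\ref{line}; concretely, the potential $f_{L_a}$ is read off from the connection form (\ref{connection}). Second, for every ordered pair $(L_a,L_b)$ I compute the image $\pi(L_a\cap L_b)\subset P$ explicitly in the moment polytope, identify its connected components $V$, verify generic cleanness, and determine the degree $|V|$ from the dimension of the stable manifold of $-\grad(f_{L_a}-f_{L_b})$ at a generic $v\in V$ subject to conditions (M1) and (M2). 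This yields an explicit basis of $Mo_\cE(P)(L_a,L_b)$ which is then matched, degree by degree, with a basis of $\cV_\cE'((V_a,D_a),(V_b,D_b))$ obtained by Dolbeault cohomology on the complex side; for toric Fano surfaces the relevant cohomology consists only of $H^0$ contributions, matching the expected minimality of $Mo_\cE(P)$.

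The main computational content lies in $\fm_2$. Because $\cE$ is strongly exceptional, the higher products $\fm_k$ with $k\geq 3$ are forced to vanish on cohomology, so the DG-quasi-isomorphism is determined by $\fm_2$ together with the degree-matching just described. On the Morse side one enumerates trivalent gradient trees joining the chosen connected components and weights each homotopy class by $e^{-A(\Gamma)}$, while on the complex side one multiplies the corresponding global holomorphic sections. The candidate functor $Mo_\cE(P)\to \cV_\cE'$ is then constructed component-by-component, and the two sides are compared by direct calculation of structure constants.

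The hard part is this $\fm_2$ calculation for $\Bl$ and $\Bll$. In these cases the Lagrangian sections no longer intersect only along isolated points, and some critical points of $f_{L_a}-f_{L_b}$ are degenerate; this is precisely why the weakened notion of generic cleanness and the conditions (M1), (M2) were introduced in subsection \ref{Mo(P)}. Consequently most of the effort goes into (a) classifying gradient trees modulo smooth homotopy and evaluating their symplectic areas $A(\Gamma)$ directly from the potentials, and (b) recognising that the resulting weighted sums on the Morse side reproduce the structure constants given by products of theta-type global holomorphic sections on the complex side. The Appendix's enumeration of compositions and non-trivial gradient trees is the detailed bookkeeping of this verification.
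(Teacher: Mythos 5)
Your proposal is correct and follows essentially the same route as the paper: reduce to the single quasi-isomorphism $Mo_\cE(P)\simeq\cV_\cE'$, invoke \cite{fut-kaj1,fut-kaj2} for $\C P^2$, $\C P^1\times\C P^1$ and $\F_1$, and settle the two remaining toric Fano surfaces $\Bl$ and $\Bll$ by the explicit case-by-case computation of morphism spaces and of $\fm_2$ via gradient trees, matching structure constants against products of the rescaled sections $\ee_{(a,b,c);I}=e^{-f_I}e^{\ii Iy}$ (which are twisted monomials rather than theta functions, a minor point of terminology). This is exactly the content of Sections \ref{2} and \ref{3} and Lemmas \ref{lem2-2}--\ref{lem2-3} and \ref{lem3-2}--\ref{lem3-3}.
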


Futaki and Kajiura prove this theorem for $\C P^n,\C P^m\times\C P^n$ and $\F_1$\cite{fut-kaj1,fut-kaj2}. In this paper, we will prove this theorem for the blow-up of $\C P^2$ at two points and three points.\par
The next corollary immediately follows from the main theorem (Theorem \ref{main}) because the triangulated categories induced from DG-quasi-isomorphic categories are isomorphic as triangulated categories.

\begin{cor}\label{main cor}
For the toric Fano surfaces $\check{X}$, we have an equivalence of triangulated categories
\begin{equation*}
Tr(Mo_\cE(P)) \simeq D^b(Coh(\check{X})),
\end{equation*}
where $P$ is the moment polytope of $\check{X}$ and $\cE$ is the collection of Lagrangian sections mirror to the full strongly exceptional collection of holomorphic line bundles on $\check{X}$.
\end{cor}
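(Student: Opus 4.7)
The plan is to derive Corollary \ref{main cor} formally from Theorem \ref{main}. Two classical inputs are needed: first, the identification of $D^b(Coh(\check{X}))$ with the triangulated hull of a DG-subcategory of line bundles on $\check{X}$, and second, the homotopy invariance of the twisted complex construction $Tr$.

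First I would recall that, for any toric Fano surface $\check{X}$, the full strongly exceptional collection $\cE$ of line bundles \cite{hille-perling11,EL} generates $D^b(Coh(\check{X}))$ in the strongest possible sense: by the Bondal-Kapranov \cite{BK} (equivalently Kontsevich \cite{kon94}) presentation of the derived category through twisted complexes one has
\begin{equation*}
D^b(Coh(\check{X})) \simeq Tr(DG_\cE(\check{X})).
\end{equation*}
This reduces the corollary to proving an equivalence $Tr(Mo_\cE(P)) \simeq Tr(DG_\cE(\check{X}))$ of triangulated categories.

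Next I would apply Theorem \ref{main}, which supplies the chain of DG-quasi-isomorphisms $Mo_\cE(P) \simeq \cV_\cE' \simeq DG_\cE(\check{X})$. Because $Tr$ is functorial from DG (or $A_\infty$) categories to triangulated categories, and because DG-quasi-isomorphic source categories yield equivalent twisted complex categories (see \cite{seidel}), these quasi-isomorphisms descend to equivalences
\begin{equation*}
Tr(Mo_\cE(P)) \simeq Tr(\cV_\cE') \simeq Tr(DG_\cE(\check{X})).
\end{equation*}
Composing with the identification from the first step yields the desired equivalence $Tr(Mo_\cE(P)) \simeq D^b(Coh(\check{X}))$.

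There is no genuine obstacle at this stage: the corollary is a purely categorical consequence of Theorem \ref{main} together with exceptional collection theory on toric Fano surfaces. All the substantive work has already been absorbed into establishing Theorem \ref{main} by the explicit comparison of morphism spaces, gradient-tree compositions, and holomorphic bundle maps over $\check{X}$; the only task here is to package that DG-level statement in triangulated form, which the invariance of $Tr$ under DG-quasi-isomorphism accomplishes immediately.
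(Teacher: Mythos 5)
Your argument is exactly the paper's: it invokes the generation statement $D^b(Coh(\check{X})) \simeq Tr(DG_\cE(\check{X}))$ from the exceptional-collection theory already recorded in subsection 2.6, and then passes the DG-quasi-isomorphisms of Theorem \ref{main} through $Tr$, using that quasi-isomorphic DG categories yield equivalent triangulated hulls. This matches the paper's one-line deduction of the corollary, so the proposal is correct and takes essentially the same route.
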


For a compact manifold $B$ without boundaries, the Fukaya category $Fuk(T^*B)$ consisting of the Lagrangian sections of $T^*B$ is $A_\infty$-equivalent to the category $Mo(B)$ of Morse homotopy on $B$ \cite{FO}. The category $Mo(P)$ is a generalization of the category $Mo(B)$ to the situations of $B$ with boundaries. Therefore, the corollary \ref{main cor} can be thought of as a  homological mirror symmetry.

\section{Blow-up of $\C P^2$ at two points.}\label{2}
In this section, we consider the homological mirror symmetry for the blow-up of $\C P^2$ at two points $\Bl$. In subsection \ref{Bl2}, we explain how to apply the SYZ construction to $\Bl$. In subsection \ref{bdl2}, we discuss line bundles on $\Bl$ constructed from the toric divisors. In subsection \ref{DGCP22}, we define the DG category $DG(\Bl)$ consisting of these line bundles and consider a full strongly exceptional collection of $D^b(Coh(\Bl))$. In subsection \ref{lagsec}, we construct Lagrangian sections which are SYZ mirror dual to the line bundles in $DG(\Bl)$. These Lagrangian sections will be the objects of $Mo(P)$. In subsection \ref{HU'}, we translate the cohomology $H(DG(\Bl))$ to the cohomology $H(\cV')$. In subsection \ref{morse homotopy}, we compute the space of morphisms in $Mo_\cE(P)$. In subsection \ref{composite}, we prove the main theorem (Theorem \ref{main}) for $\Bl$.

\subsection{Blow-up of $\C P^2$ at two points $\Bl$.}\label{Bl2}
The blow-up of $\C P^2$ at two points $\Bl$ is defined by
\begin{equation*}
\Bl:=\left\{\left([r_0:r_1],[s_0:s_1],[t_0:t_1:t_2]\right) \in \C P^1\times \C P^1\times \C P^2\ \middle|
\begin{array}{l} s_0t_1 = s_1t_0 \\ r_0t_2 = r_1t_0
\end{array}\right\}.
\end{equation*}
We take the following open covering $\{U_i\}_i$: 
\begin{align*}
U_0  =  \left\{([r_0:r_1],[s_0:s_1],[t_0:t_1:t_2]) \in \Bl\ \middle|\ t_0 \neq 0\ ,\ r_0 \neq \ 0\ ,\ s_0 \neq 0\right\},\\
U_1  =  \left\{([r_0:r_1],[s_0:s_1],[t_0:t_1:t_2]) \in \Bl\ \middle|\ t_1 \neq 0\ ,\ r_0 \neq \ 0\ ,\ s_1 \neq 0\right\},\\
U_2  =  \left\{([r_0:r_1],[s_0:s_1],[t_0:t_1:t_2]) \in \Bl\ \middle|\ t_1 \neq 0\ ,\ r_1 \neq \ 0\ ,\ s_1 \neq 0\right\},\\
U_3  =  \left\{([r_0:r_1],[s_0:s_1],[t_0:t_1:t_2]) \in \Bl\ \middle|\ t_2 \neq 0\ ,\ r_1 \neq \ 0\ ,\ s_1 \neq 0\right\},\\
U_4  =  \left\{([r_0:r_1],[s_0:s_1],[t_0:t_1:t_2]) \in \Bl\ \middle|\ t_2 \neq 0\ ,\ r_1 \neq \ 0\ ,\ s_0 \neq 0\right\}.
\end{align*}
The local coordinates in $U_i$ are
\begin{equation*}
\begin{split}
&(u_0,v_0) := \left(\frac{t_1}{t_0},\frac{t_2}{t_0}\right),\ 
(u_1,v_1) := \left(\frac{t_0}{t_1},\frac{r_1}{r_0}\right),\ 
(u_2,v_2) := \left(\frac{t_2}{t_1},\frac{r_0}{r_1}\right),\\
&(u_3,v_3) := \left(\frac{s_0}{s_1},\frac{t_1}{t_2}\right),\ 
(u_4,v_4) := \left(\frac{s_1}{s_0},\frac{t_0}{t_2}\right),
\end{split}
\end{equation*}
and the coordinate transformations turn out to be
\begin{equation}\label{cd2}
\begin{array}{cc}
(u_1,v_1) = \left(\frac{1}{u_0},v_0\right), & (u_2,v_2) = \left(\frac{v_0}{u_0},\frac{1}{v_0}\right)\\
(u_3,v_3) = \left(\frac{1}{u_0},\frac{u_0}{v_0}\right), & (u_4,v_4) = \left(u_0,\frac{1}{v_0}\right).
\end{array}
\end{equation}
There exists natural projections $\ \pr_1,\pr_2:\Bl\to\C P^1$ and $\pr_3:\Bl\to\C P^2$.
Using these projections, we define the K\"ahler form of $\Bl$ by
\begin{equation*}
\check{\omega}:= C_1\pr_1^*\left(\omega_{\C P^1}\right) + C_2\pr_2^*\left(\omega_{\C P^1}\right) + C_3\pr_3^*\left(\omega_{\C P^2}\right),
\end{equation*}
where each $C_i$ is real positive constant and $\omega_{\C P^n}$ is the Fubini-Study form on $\C P^n$. Correspondingly, the moment map $\mu:\Bl\to \R^2$ is given by
\begin{equation*}
\begin{split}
\mu([r_0:r_1],&[s_0:s_1],[t_0:t_1:t_2])\\
&:=\left(\frac{2C_2|s_1|^2}{|s_0|^2+|s_1|^2}+\frac{2C_3|t_1|^2}{|t_0|^2+|t_1|^2+|t_2|^2} , \frac{2C_1|r_1|^2}{|r_0|^2+|r_1|^2}+\frac{2C_3|t_2|^2}{|t_0|^2+|t_1|^2+|t_2|^2}\right).
\end{split}
\end{equation*}
The image $\mu(\Bl)$ is called the moment polytope associated to $\check{\omega}$, which is a pentagon. Namely, the moment polytope is given by
\begin{equation*}
P:=\left\{(x^1,x^2)\in\R^2\ \middle|\ \begin{array}{l}0\leq x^1 \leq 2(C_2+C_3)\\0\leq x^2 \leq 2(C_1+C_3)\\x^1 + x^2 \leq 2(C_1+C_2+C_3)\end{array}\right\}.
\end{equation*}
We denote each edge of $P$ by $E_i$  (see Figure \ref{mp2}).
\begin{figure}[h]
\center
\begin{tikzpicture}
\draw(0,0)-- node[auto=right]{$E_2$} (4,0)-- node[auto=right]{$E_3$} (4,2)-- node[auto=right]{$E_4$} (2,4)-- node[auto=right]{$E_5$} (0,4)-- node[auto=right]{$E_1$} cycle;
\end{tikzpicture}
\caption{The moment polytope of $\Bl.$}\label{mp2}
\end{figure}
Now, we set
\begin{equation*}
\check{M}:=U_0\cap U_1\cap U_2\cap U_3\cap U_4,\ \ \ B:=\mathrm{Int}P,
\end{equation*}
and we treat $\check{M}$ as a torus fibration $\mu|_{\check{M}}:\check{M}\to B$. Hereafter, we fix $U:=U_0,\ u:=u_0,\ v:=v_0$. Then, $\check{M}$ is equipped with an affine structure by $u=e^{x_1+\ii y_1}$ and $v=e^{x_2+\ii y_2}$, where $y_1$ and $y_2$ are the fiber coordinates of $\check{M}$. The K\"ahler form $\omega$ is expressed as
\begin{equation*}
\begin{split}
\omega &= 2\ii C_3\frac{(1+v\overline{v})^2du\wedge d\overline{u} - \overline{u}vdu\wedge d\overline{v} - u\overline{v}dv\wedge d\overline{u} +(1+u\overline{u})dv\wedge d\overline{v}}{(1+u\overline{u}+v\overline{v})^2} \\
&\ \ \ +2\ii C_2\frac{du\wedge d\overline{u}}{(1+u\overline{u})^2} +2\ii C_1\frac{dv\wedge d\overline{v}}{(1+v\overline{v})^2} \\
 &= 4 C_3\frac{(1+t)sdx_1\wedge dy_1 -stdx_1\wedge dy_2 - stdx_2\wedge dy_1 +(1+s)tdx_2\wedge dy_2}{(1+s+t)^2}\\
 &\ \ \ +4 C_2\frac{sdx_1\wedge dy_1}{(1+s)^2} +4 C_1\frac{tdx_2\wedge dy_2}{(1+t)^2},
\end{split}
\end{equation*} 
 on $U$, where $s:=|u|^2 = e^{2x_1}$ and $t:=|v^2|= e^{2x_2}$. By this expression, the inverse matrix $\{g^{ij}\}$ of the metric $\{g_{ij}\}$ on $B$ is given by
\begin{equation*}
\begin{pmatrix}
g^{11} & g^{12}\\
g^{21} & g^{22}
\end{pmatrix}
=
\begin{pmatrix}
C_2\frac{4 s}{(1+s)^2} + C_3\frac{4(1+t)s}{(1+s+t)^2} & C_3\frac{-4 st}{(1+s+t)^2} \\
C_3\frac{-4 st}{(1+s+t)^2} & C_1\frac{4 t}{(1+t)^2} + C_3\frac{4(1+s)t}{(1+s+t)^2}
\end{pmatrix}.
\end{equation*}
Now, we put $\psi:=C_3\log(1+e^{2 x_1}+e^{2 x_2}) + C_2\log(1+e^{2 x_1}) + C_1\log(1+e^{2 x_2})$ and then $\frac{\del^2 \psi}{\del x_i\del x_j}=g^{ij}$. Thus, the dual coordinate $(x^1,x^2)$ is obtained by
\begin{equation*}
\begin{split}
(x^1,x^2)
&:=
\left(\frac{\del\psi}{\del x_1} , \frac{\del\psi}{\del x_2}\right)\\
&=
\left(C_2\frac{2e^{2 x_1}}{1+e^{2 x_1}} + C_3\frac{2e^{2 x_1}}{1+e^{2 x_1}+e^{2 x_2}}, C_1\frac{2e^{2 x_2}}{1+e^{2 x_2}} + C_3\frac{2e^{2 x_2}}{1+e^{2 x_1}+e^{2 x_2}}\right)\\
&=
\mu\left([1:e^{x_1+\ii y_1}],[1:e^{x_2+\ii y_2}],[1:e^{x_1+\ii y_1}:e^{x_2+\ii y_2}]\right).
\end{split}
\end{equation*}
For simplicity, we fix $C_1=C_2=C_3=1$ since the structure of the category $Mo(P)$ we shall construct is independent of these constants.
Hereafter, we regard $M$ as the dual torus fibration of $\mu|_{\check{M}}:\check{M}\to B$.

\subsection{Holomorphic line bundles $\cO(a,b,c)$ over $\Bl$.}\label{bdl2}
Any line bundle over $\Bl$ is constructed from a toric divisor, which is a linear combination of the following divisors:
\begin{align*}
 D_1&=(t_1=s_1=0) & D_2&=(t_2=r_1=0) & D_3&=(t_0=t_2=0)\\
 D_4&=(t_0=s_0=r_0=0) & D_5&=(t_0=t_1=0). & 
\end{align*}
The corresponding Cartier divisors are as follows:
\begin{align*}
D_1 :& \{(U_0,u_0),(U_1,1),(U_2,1),(U_3,1),(U_4,u_4)\}\\
D_2 :& \{(U_0,v_0),(U_1,v_1),(U_2,1),(U_3,1),(U_4,1)\}\\
D_3 :& \{(U_0,1),(U_1,u_1),(U_2,u_2),(U_3,1),(U_4,1)\}\\
D_4 :& \{(U_0,1),(U_1,1),(U_2,v_2),(U_3,u_3),(U_4,1)\}\\
D_5 :& \{(U_0,1),(U_1,1),(U_2,1),(U_3,v_3),(U_4,v_4)\}.
\end{align*}
Using the coordinate transformations (\ref{cd2}), the transition functions of the line bundle $\cO(D_i)$ corresponding to Cartier divisor $D_i$ are as follows:
\begin{align*}
D_1:& \phi_{01}=u_0,  &  &\phi_{02}=u_0,  &  &\phi_{03}=u_0,  &  &\phi_{04}=\frac{u_0}{u_4}=1\\
D_2:& \phi_{01}=\frac{v_0}{v_1}=1,  &  &\phi_{02}=v_0,  &  &\phi_{03}=v_0,  &  &\phi_{04}=v_0\\
D_3:& \phi_{01}=\frac{1}{u_1}=u_0,  &  &\phi_{02}=\frac{1}{u_2}=\frac{u_0}{v_0},  &  &\phi_{03}=1,  &  &\phi_{04}=1\\
D_4:& \phi_{01}=1,  &  &\phi_{02}=\frac{1}{v_2}=v_0,  &  &\phi_{03}=\frac{1}{u_3}=u_0,  &  &\phi_{04}=1\\
D_5:& \phi_{01}=1,  &  &\phi_{02}=1,  &  &\phi_{03}=\frac{1}{v_3}=\frac{v_0}{u_0},  &  &\phi_{04}=\frac{1}{v_4}=v_0.
\end{align*}
Thus, we see that $\cO(D_1)=\cO(D_3+D_4)$ and $\cO(D_2)=\cO(D_4+D_5)$, then any line bundle over $\Bl$ is generated by $(D_1,D_2,D_3+D_4+D_5)$. For more details, see \cite{fulton93toric,coxtor}. \par
On the other hand, we have line bundles $\pr_1^*(\cO_{\C P^1}(1)),\ \pr_2^*(\cO_{\C P^1}(1))$ and $\pr_3^*(\cO_{\C P^2}(1))$. By comparing the transition functions of these line bundles with those of $\cO(D_i)$, we can identify $\pr_1^*(\cO_{\C P^1}(1))=\cO(D_2),\ \pr_2^*(\cO_{\C P^1}(1))=\cO(D_1)$ and $\pr_3^*(\cO_{\C P^2}(1))=\cO(D_3+D_4+D_5)$. Furthermore, the connection one-forms of $\pr_1^*(\cO_{\C P^1}(1)),\ \pr_2^*(\cO_{\C P^1}(1))$ and $\pr_3^*(\cO_{\C P^2}(1))$ are expressed as
\begin{equation*}
\begin{split}
\pr_1^*(A_{\C P^1}) &= -\frac{\overline{v}dv}{1+v\overline{v}}=-\frac{t(dx_2+\ii dy_2)}{1+t}\\
\pr_2^*(A_{\C P^1}) &= -\frac{\overline{u}du}{1+u\overline{u}}=-\frac{s(dx_1+\ii dy_1)}{1+s}\\
\pr_3^*(A_{\C P^2}) &= -\frac{\overline{u}du+\overline{v}dv}{1+u\overline{u}+v\overline{v}}=-\frac{s(dx_1+\ii dy_1) + t(dx_2+\ii dy_2)}{1+s+t}
\end{split}
\end{equation*}
on $U$. Thus, for $a,b,c\in\Z$, the connection one-form $A_{(a,b,c)}$ of $\cO(a,b,c):=\cO(aD_1 + bD_2 + c(D_3+D_4+D_5))$ is given by
\begin{equation}\label{Aabc}
A_{(a,b,c)}:=-a\frac{s(dx_1+\ii dy_1)}{1+s} - b\frac{t(dx_2+\ii dy_2)}{1+t} - c\frac{s(dx_1+\ii dy_1) + t(dx_2+\ii dy_2)}{1+s+t}.
\end{equation}

\subsection{DG category $DG(\Bl)$ and full strongly exceptional collection $\cE$.}\label{DGCP22}
We consider $DG(\Bl)$ as defined in subsection 2.4, where the objects are the line bundles $\cO(a,b,c)$ with the connection one-form $A_{(a,b,c)}$. The DG structure of $DG(\Bl)$ is given by the way described in subsection \ref{lineDG}. Since each $\cO(a,b,c)$ is a line bundle, we have
\begin{equation*}
\begin{split}
DG(\Bl)&\left(\cO(a_1,b_1,c_1),\cO(a_2,b_2,c_2)\right) \\
&\simeq DG(\Bl)\left(\cO,\cO(a_2-a_1,b_2-b_1,c_2-c_1)\right).
 \end{split}
\end{equation*}
In particular,  the zero-th cohomology of $DG(\Bl)(\cO(0,0,0),\cO(a,b,c))$ is the space $\Gamma(\Bl,\cO(a,b,c))$ of holomorphic global sections. 
\begin{equation*}
\begin{split}
H^0(DG(\Bl)(&\cO(a_1,b_1,c_1),\cO(a_2,b_2,c_2))) 
\\&\simeq H^0(DG(\Bl)(\cO(0,0,0),\cO(a_2-a_1,b_2-b_1,c_2-c_1))\\
&\simeq \Gamma(\Bl,\cO(a_2-a_1,b_2-b_1,c_2-c_1)) .
\end{split}
\end{equation*}
Using the coordinates $(u,v)$ for $U$, the generators of $\Gamma(\Bl,\cO(a,b,c))$ are expressed explicitly as
\begin{equation*}
\psi_{(i_1,i_2)}:=u^{i_1}v^{i_2},
\end{equation*}
where $0\leq i_1\leq a+c,\ 0\leq i_2\leq b+c$ and $i_1+i_2\leq a+b+c$.\par
Next, we consider a full strongly exceptional collection of the derived category of coherent sheaves $D^b(Coh(\Bl))\simeq Tr(DG(\Bl))$. For Fano surfaces, Hille and Perling proposed the construction of such collection \cite{hille-perling11}, where the collection is obtained by the generator of $\mathrm{Pic}(\C P^2)$ and the invertible sheaves corresponding to toric divisors\footnote{Elagin-Lunts proved that any exceptional collection of line bundles on Fano surfaces is given by the Hille-Perling's construction in \cite{EL}.}.
 Here, the exceptional divisors of blow-ups are $D_3$ and $D_5$. The corresponding line bundles are
\begin{equation*}
\begin{split}
\cO(D_3) &= \cO(-D_2 + D_3 + D_4 + D_5) = \cO(0,-1,1)\\
\cO(D_5) &= \cO(-D_1 + D_3 + D_4 + D_5) = \cO(-1,0,1),
\end{split}
\end{equation*}
respectively. Also, the generator of $\mathrm{Pic}(\C P^2)$ is $\cO(0,0,1)$.
We put
\begin{equation*}
\cE:=(\cO,\cO(0,-1,1),\cO(-1,0,1),\cO(0,0,1),\cO(0,0,2)).
\end{equation*}
We can identify this collection $\cE$ with a full strongly exceptional collection of Hille-Perling's construction in \cite[Theorem 5.7]{hille-perling11}.
We denote by $DG_\cE(\Bl)$ be the full subcategory of $DG(\Bl)$ consisting of $\cE$.

\subsection{Lagrangian sections $L(a,b,c)$.}\label{lagsec}
We discuss Lagrangian sections $L(a,b,c)$ of the dual torus fibration $M\to B$ corresponding to the line bundle $\cO(a,b,c)$. Twisting the fibers of $\cO(a,b,c)$ by the isomorphisms $\Psi_{(a,b,c)}:=(1+s)^{\frac{a}{2}}(1+t)^{\frac{b}{2}}(1+s+t)^{\frac{c}{2}}$, we can remove the $dx$ term from $A_{(a,b,c)}$ as follows:
\begin{equation*}
\Psi_{(a,b,c)}^{-1}(d+A_{(a,b,c)})\Psi_{(a,b,c)} = d - \ii\left( a\frac{s}{1+s} + c\frac{s}{1+s+t}\right)dy_1 - \ii\left( b\frac{t}{1+t} + c\frac{t}{1+s+t}\right)dy_2,
\end{equation*}
where $s=e^{2x_1}$ and $t=e^{2x_2}$.
Thus, the Lagrangians section $L(a,b,c)$ corresponding to $\cO(a,b,c)$ is obtained as the coefficients of $dy$ terms:
\begin{equation}\label{lag}
\begin{pmatrix}
y^1 \\ y^2
\end{pmatrix}
=2\pi
\begin{pmatrix}
 a\frac{s}{1+s} +  c\frac{s}{1+s+t} \\  b\frac{t}{1+t} +  c\frac{t}{1+s+t}
\end{pmatrix}.
\end{equation}
By this expression, we can extend $L(a,b,c)$ on $B$ to that on $P$ smoothly.
The potential function of this Lagrangian section is given by
\begin{equation}\label{fabc}
f =  \pi a\log(1+s) +  \pi b\log(1+t) +  \pi c\log(1+s+t).
\end{equation}
The collection of the Lagrangian sections corresponding to the full strongly exceptional collection $\cE$ given in subsection \ref{DGCP22} is denoted by the same symbol $\cE$, i.e.,
\begin{equation*}
\cE=\left(L(0,0,0),L(0,-1,1),L(-1,0,1),L(0,0,1),L(0,0,2)\right).
\end{equation*}

\subsection{Cohomologies $H(\cV')$.}\label{HU'}
Let $\cV$ be a DG category of holomorphic line bundles over $\check{M}$. We consider the faithful functor $\cI:DG(\Bl)\to\cV$. Let $\widetilde{\cO}(a,b,c)$ denote the line bundle $\cO(a,b,c)$ twisted by $\Psi_{(a,b,c)}$. This functor assigns $\cO(a,b,c)$ to $\widetilde{\cO}(a,b,c)$. Also, each generator $\psi_{(i_1,i_2)}\in DG(\Bl)(\cO,\cO(a,b,c))$ is sent to be $\Psi^{-1}_{(a,b,c)}\psi_{(i_1,i_2)}\in\cV(\widetilde{\cO},\widetilde{\cO}(a,b,c))$, i.e.,
\begin{equation}\label{pre-e}
\begin{split}
\Psi^{-1}_{(a,b,c)}\psi_{(i_1,i_2)} &= (1+s)^{\frac{-a}{2}}(1+t)^{\frac{-b}{2}}(1+s+t)^{\frac{-c}{2}}u^{i_1}v^{i_2}\\
 &=(1+s)^\frac{-a}{2}(1 + t)^\frac{-b}{2}(1 + s + t)^\frac{-c}{2}s^\frac{i_1}{2}t^\frac{i_2}{2}e^{\ii(i_1y_1 + i_2y_2)}.
\end{split}
\end{equation}
The images are denoted by $\cV':=\cI(DG(\Bl))$ and $\cV_\cE':=\cI(DG_\cE(\Bl))$, respectively. Then, the basis of $H^0(\cV(\widetilde{\cO},\widetilde{\cO}(a,b,c))$ are given by (\ref{pre-e}). If the functions $\Psi^{-1}_{(a,b,c)}\psi_{(i_1,i_2)}$ on $B$ extend to that on $P$ smoothly, then we rescale each basis $\Psi^{-1}_{(a,b,c)}\psi_{(i_1,i_2)}$ by multiplying a positive number and denote it by $\ee_{(a,b,c);(i_1,i_2)}$ so that
\begin{equation*}
\max_{x\in P}|\ee_{(a,b,c);(i_1,i_2)}(x)|=1.
\end{equation*}

\subsection{Weighted Morse homotopy $Mo_{\cE}(P)$.}\label{morse homotopy}
For the moment polytope $P$ of $\Bl$, we construct the full subcategory $Mo_\cE(P)\subset Mo(P)$ consisting of $\cE$, where $Mo(P)$ is defined in subsection \ref{Mo(P)}. The objects of $Mo(P)$ are Lagrangian sections $L(a,b,c)$ obtained in subsection \ref{lagsec}.  Since we have
\begin{equation*}
Mo(P)(L(a_1,b_1,c_1),L(a_2,b_2,c_2)) \simeq Mo(P)(L(0,0,0),L(a_2-a_1,b_2-b_1,c_2-c_1)),
\end{equation*}
we concentrate to computing the space $Mo_\cE(P)(L(0,0,0),L(a,b,c))$.
As considered in subsection \ref{glag}, the intersections of $L(0,0,0)$ and $L(a,b,c)$ are expressed as
\begin{equation}\label{eq}
\begin{split}
2\pi
\begin{pmatrix}
i_1 \\ i_2
\end{pmatrix}
=
2\pi\begin{pmatrix}
 a\frac{s}{1+s} +  c\frac{s}{1+s+t} \\  b\frac{t}{1+t} +  c\frac{t}{1+s+t}
\end{pmatrix}
\end{split}
\end{equation} 
in the covering space of $\pi:\bar{M}\to P$, where $(i_1,i_2)\in\Z^2$, $s=e^{2x_1}$ and $t=e^{2x_2}$. If there exists a nonempty intersection, then we set $V_{(a,b,c);(i_1,i_2)}:=\pi(L(0,0,0)\cap L(a,b,c))$. We check  that $V_{(a,b,c);(i_1,i_2)}$ satisfies the conditions (M1) and (M2) given in subsection \ref{Mo(P)}. In this case, the gradient vector field associated to $V_{(a,b,c);(i_1,i_2)}$ is of the form
\begin{equation}\label{grad2}
2\pi\left(a\frac{s}{1+s} + c\frac{s}{1+s+t} - i_1\right)\frac{\del}{\del x^1} + 2\pi\left(b\frac{t}{1+t} + c\frac{t}{1+s+t} - i_2 \right)\frac{\del}{\del x^2}.
\end{equation}\par
On the other hand, for the space of the opposite directional morphisms, we have
\begin{equation*}
Mo(P)(L(a,b,c),L(0,0,0)) \cong Mo(P)(L(0,0,0),L(-a,-b,-c)).
\end{equation*}
Thus, the connected component $V_{(a,b,c);(i_1,i_2)}$ coincides with the connected component $V_{(-a,-b,-c);(-i_1,-i_2)}$. The gradient vector field associated with $V_{(-a,-b,-c);(-i_1,-i_2)}$ is the opposite direction of (\ref{grad2}).\par
\begin{figure}[h]
\center
\begin{tikzpicture}
\draw(0,0)--node[auto=left]{$E_2$} node[auto=right]{$t=0$} (4,0)-- node[auto=left]{$E_3$}node[auto=right]{$s=\infty,\ t\leq\infty$} (4,2)--node[auto=left]{$E_4$} node[auto=right]{$s,t=\infty$} (2,4)--node[auto=left]{$E_5$} node[auto=right]{$s\leq\infty,t=\infty$} (0,4)--node[auto=left]{$E_1$} node[auto=right]{$s=0$} cycle;
\draw(4,2)node[right]{$\frac{t}{s}=0$};
\draw(2,4)node[right]{$\frac{t}{s}=\infty$};
\end{tikzpicture}
\caption{The moment polytope of $\Bl$.}
\label{poly2}
\end{figure}
For all $L$ and $L'$, we compute generators of $Mo_\cE(P)(L,L')$.

\begin{itemize}
\item $Mo_\cE(P)\left(L(0,0,0),L(0,-1,1)\right)$ and $Mo_\cE(P)\left(L(0,0,0),L(-1,0,1)\right)$:\ \par
The intersection of $L(0,0,0)$ and $L(0,-1,1)$ is expressed as
\begin{equation*}
i_1 = \frac{s}{1+s+t},\ \ \ 
i_2 = -\frac{t}{1+t} + \frac{t}{1+s+t}.
\end{equation*}
Then, since $(x^1,x^2)=(\frac{2s}{1+s}+\frac{2s}{1+s+t},\frac{2t}{1+t}+\frac{2t}{1+s+t})$, we obtain the connected components
\begin{equation*}
V_{(0,-1,1);(0,0)}  = E_1 \cup E_5,\ \ \ 
V_{(0,-1,1);(1,0)}  = \{(4,0)\},\ \ \ 
V_{(0,-1,1);(1,-1)} = \{(4,2)\},
\end{equation*}
where $E_i$ is the edge of the moment polytope associated to each toric divisor $D_i$ (see Figure \ref{poly2}). For each $(i_1,i_2)$, the gradient vector field is given by
\begin{equation*}
2\pi\left(\frac{s}{1+s+t} - i_1\right)\frac{\del}{\del x^1} + 2\pi\left(-\frac{t}{1+t} + \frac{t}{1+s+t} - i_2 \right)\frac{\del}{\del x^2}.
\end{equation*}
The degrees of the connected components are $|V_{(0,-1,1);(0,0)}|=0$ and $|V_{(0,-1,1);(1,0)}|=|V_{(0,-1,1);(1,-1)}|=1$. In fact, for $V_{(0,-1,1);(1,0)}$ and $V_{(0,-1,1);(1,-1)}$, the stable manifold is $\{(4,x^2)\}$. Then, since $V_{(0,-1,1);(1,0)}$ and $ V_{(0,-1,1);(1,-1)}$ are vertices of the moment polytope $P$, they do not satisfy the condition (M2) in subsection \ref{Mo(P)} and can not be the generators. Thus, the connected component $V_{(0,-1,1);(0,0)}$ is the only generator of $Mo_\cE(P)\left(L(0,0,0),L(0,-1,1)\right)$ of degree zero:
\begin{equation*}
Mo_\cE(P)\left(L(0,0,0),L(0,-1,1)\right) = \C\cdot V_{(0,-1,1);(0,0)},\ \ \ |V_{(0,-1,1);(0,0)}|=0.
\end{equation*}\par
On the other hand, for the space $Mo_\cE(P)\left(L(0,-1,1),L(0,0,0)\right)$, the degree of any connected components is positive. For this reason, we have
\begin{equation*}
Mo_\cE(P)\left(L(0,-1,1),L(0,0,0)\right) = 0.
\end{equation*}\par
By exchanging $i_1$ for $i_2$ and $s$ for $t$, we have
\begin{equation*}
\begin{split}
V_{(-1,0,1);(0,0)} &= E_2 \cup E_3, \\
|V_{(-1,0,1);(0,0)}| &= 0 ,\\
Mo_\cE(P)\left(L(0,0,0),L(-1,0,1)\right) &= \C\cdot V_{(-1,0,1);(0,0)},\\
Mo_\cE(P)\left(L(-1,0,1),L(0,0,0)\right) &= 0.
\end{split}
\end{equation*}

\item $Mo_\cE(P)\left(L(0,0,0),L(0,0,1)\right)$ and $Mo_\cE(P)\left(L(0,0,1),L(0,0,2)\right)$:\ \par
The intersection of $L(0,0,0)$ and $L(0,0,1)$ is expressed as
\begin{equation*}
i_1 = \frac{s}{1+s+t},\ \ \ 
i_2 = \frac{t}{1+s+t}.
\end{equation*}
Then, we obtain the connected components
\begin{equation*}
V_{(0,0,1);(0,0)}  = \{(0,0)\},\ \ \ 
V_{(0,0,1);(1,0)}  = E_3,\ \ \ 
V_{(0,0,1);(0,1)}  = E_5,
\end{equation*}
and the corresponding gradient vector field
\begin{equation*}
2\pi\left(\frac{s}{1+s+t} - i_1\right)\frac{\del}{\del x^1} + 2\pi\left(\frac{t}{1+s+t} - i_2 \right)\frac{\del}{\del x^2}.
\end{equation*}
Since the degrees of these connected components are zero, we have
\begin{equation*}
Mo_\cE(P)\left(L(0,0,0),L(0,0,1)\right) = \C\cdot V_{(0,0,1);(0,0)} \oplus \C\cdot V_{(0,0,1);(1,0)} \oplus \C\cdot V_{(0,0,1);(0,1)}.
\end{equation*}
Furthermore, we have
\begin{equation*}
\begin{split}
Mo_\cE(P)\left(L(0,0,1),L(0,0,2)\right) &\cong Mo(P)\left(L(0,0,0),L(0,0,1)\right)\\
&= \C\cdot V_{(0,0,1);(0,0)} \oplus \C\cdot V_{(0,0,1);(1,0)} \oplus \C\cdot V_{(0,0,1);(0,1)}.
\end{split}
\end{equation*}\par
On the other hand, the space $Mo_\cE(P)\left(L(0,0,1),L(0,0,0)\right)$ is trivial. In fact, the degree of $V_{(0,0,-1);I}$ is greater than 0 and none of the point $v_{(0,0,-1);I}\in V_{(0,0,-1);I}$ is an interior point of $P$. Thus, we have
\begin{equation*}
\begin{split}
Mo_\cE(P)\left(L(0,0,1),L(0,0,0)\right) &= 0,\\
Mo_\cE(P)\left(L(0,0,2),L(0,0,1)\right) &= 0.
\end{split}
\end{equation*}

\item $Mo_\cE(P)\left(L(0,0,0),L(0,0,2)\right)$:\ \par
The intersection of $L(0,0,0)$ and $L(0,0,2)$ is expressed as
\begin{equation*}
i_1 = \frac{2s}{1+s+t},\ \ \ 
i_2 = \frac{2t}{1+s+t}.
\end{equation*}
Then, we obtain the connected components
\begin{align*}
V_{(0,0,2);(0,2)} &= E_5\\
V_{(0,0,2);(0,1)} &= \{(0,2)\} & V_{(0,0,2);(1,1)} &= \{(3,3)\}\\ 
V_{(0,0,2);(0,0)} &= \{(0,0)\} & V_{(0,0,2);(1,0)} &= \{(2,0)\} & V_{(0,0,2);(2,0)} &= E_3,
\end{align*}
and the corresponding gradient vector field
\begin{equation*}
2\pi\left(\frac{2s}{1+s+t} - i_1\right)\frac{\del}{\del x^1} + 2\pi\left(\frac{2t}{1+s+t} - i_2 \right)\frac{\del}{\del x^2}.
\end{equation*}
For each $V_{(0,0,2);I}$, the degree is zero. Thus, we have
\begin{equation*}
Mo_\cE(P)\left(L(0,0,0),L(0,0,2)\right) = \bigoplus_{\substack{0\leq i_1 \leq 2\\0\leq i_2\leq2\\0\leq i_1+i_2\leq2}} \C\cdot V_{(0,0,2);(i_1,i_2)}.
\end{equation*}
For the space $Mo_\cE(P)\left(L(0,0,2),L(0,0,0)\right)$, since each $V_{(0,0,2);I}$ is included in $\del P$ and its degree is positive, it can not be generators. Thus, we have
\begin{equation*}
Mo_\cE(P)\left(L(0,0,2),L(0,0,0)\right) = 0.
\end{equation*}

\item $Mo_\cE(P)\left(L(0,-1,1),L(-1,0,1)\right)$:\ \par
Since $Mo_\cE(P)\left(L(0,-1,1),L(-1,0,1)\right) \cong Mo_\cE(P)\left(L(0,0,0),L(-1,1,0)\right)$, we consider $Mo_\cE(P)\left(L(0,0,0),L(-1,1,0)\right)$. The intersection of $L(0,0,0)$ and $L(-1,1,0)$ is expressed as
\begin{equation*}
i_1 = -\frac{s}{1+s},\ \ \ i_2 = \frac{t}{1+t}.
\end{equation*}
Then, we obtain the connected components
\begin{align*}
V_{(-1,1,0);(0,1)} &= \{(0,4)\},  &   V_{(-1,1,0);(-1,1)} &= E_4,\\
V_{(-1,1,0);(0,0)} &= \{(0,0)\},  &   V_{(-1,1,0);(-1,0)} &= \{(4,0)\},
\end{align*}
and the corresponding gradient vector field
\begin{equation*}
2\pi\left(-\frac{s}{1+s} - i_1\right)\frac{\del}{\del x^1} + 2\pi\left(\frac{t}{1+t} - i_2 \right)\frac{\del}{\del x^2}.
\end{equation*}
These connected components are not generators of $Mo_\cE(P)\left(L(0,0,0),L(-1,1,0)\right)$. In fact, when $(i_1,i_2)$ is $(0,0),\ (0,1)$ or $(-1,0)$, the corresponding stable manifold is as follows:
\begin{equation*}
S_{(0,4)} = \{(x^1,4)\},\ \ \  S_{(0,0)} = \{(4,x^2)\}  \ \ \  \mbox{or}\ \ \   S_{(4,0)} = \{(4,x^2)\},
\end{equation*}
respectively.
For $(i_1,i_2)=(-1,1)$, the dimension of the stable manifold is not constant. Now, we decompose $V_{(-1,1,0);(-1,1)}$ into the upper half and the lower half:
\begin{equation*}
\begin{split}
V_{(-1,1,0);(-1,1)} &= V_{(-1,1,0);(-1,1)}^+ \cup V_{(-1,1,0);(-1,1)}^- ,\\
V_{(-1,1,0);(-1,1)}^- &:= \{(x^1,x^2)\in P\ |\ x^1+x^2=6,\ 2\leq x^2\leq3\},\\
V_{(-1,1,0);(-1,1)}^+ &:= \{(x^1,x^2)\in P\ |\ x^1+x^2=6,\ 3<x^2\leq4\}.
\end{split}
\end{equation*}
Then, we have $\dim S_v =1$ (resp. $\dim S_v =0$) if $v\in V_{(-1,1,0);(-1,1)}^+$ (resp. $v\in V_{(-1,1,0);(-1,1)}^-$).\par
For the space of opposite directional morphisms, the degree of each connected components is similar to those above. Thus, we have
\begin{align*}
Mo_\cE(P)\left(L(0,-1,1),L(-1,0,1)\right) &= 0, \\
Mo_\cE(P)\left(L(-1,0,1),L(0,-1,1)\right) &= 0.
\end{align*}

\item $Mo_\cE(P)\left(L(0,-1,1),L(0,0,1)\right)$ and $Mo_\cE(P)\left(L(-1,0,1),L(0,0,1)\right)$:\ \par
Since $Mo_\cE(P)\left(L(0,-1,1),L(0,0,1)\right) \cong Mo_\cE(P)\left(L(0,0,0),L(0,1,0)\right)$, we consider $Mo_\cE(P)\left(L(0,0,0),L(0,1,0)\right)$. The intersection of $L(0,0,0)$ and $L(0,1,0)$ is expressed as
\begin{equation*}
i_1 = 0,\ \ \ i_2 = \frac{t}{1+t}.
\end{equation*}
Then, we obtain the connected components
\begin{equation*}
V_{(0,1,0);(0,0)} = E_2,\ \ \ V_{(0,1,0);(0,1)} = E_4 \cup E_5,
\end{equation*}
and the corresponding gradient vector field
\begin{equation*}
2\pi\left(\frac{t}{1+t} - i_2 \right)\frac{\del}{\del x^2}.
\end{equation*}
The degree of these $V_{(0,1,0);I}$ is zero. Also, since $|V_{(0,-1,0);J}|\geq1$, we have
\begin{align*}
Mo_\cE(P)\left(L(0,-1,1),L(0,0,1)\right) &= \C\cdot V_{(0,1,0);(0,0)} \oplus \C\cdot V_{(0,1,0);(0,1)}, \\
Mo_\cE(P)\left(L(0,0,1),L(0,-1,1)\right) &= 0.
\end{align*} 
By exchanging $i_1$ for $i_2$ and $s$ for $t$, we have
\begin{align*}
V_{(1,0,0);(0,0)} &= E_1, \\
V_{(1,0,0);(1,0)} &= E_3 \cup E_4, \\
Mo_\cE(P)\left(L(-1,0,1),L(0,0,1)\right) &= \C\cdot V_{(1,0,0);(0,0)} \oplus \C\cdot V_{(1,0,0);(1,0)}, \\
Mo_\cE(P)\left(L(0,0,1),L(-1,0,1)\right) &= 0.
\end{align*}

\item $Mo_\cE(P)\left(L(0,-1,1),L(0,0,2)\right)$ and $Mo_\cE(P)\left(L(-1,0,1),L(0,0,2)\right)$:\ \par
Since $Mo_\cE(P)\left(L(0,-1,1),L(0,0,2)\right) \cong Mo_\cE(P)\left(L(0,0,0),L(0,1,1)\right)$, we consider $Mo_\cE(P)\left(L(0,0,0),L(0,1,1)\right)$. The intersection of $L(0,0,0)$ and $L(0,1,1)$ is expressed as
\begin{equation*}
i_1 = \frac{s}{1+s+t},\ \ \ 
i_2 = \frac{t}{1+t} + \frac{t}{1+s+t} = \frac{x^2}{2}.
\end{equation*}
Then, we obtain the connected components
\begin{align*}
V_{(0,1,1);(0,2)} &= E_5 ,\\
V_{(0,1,1);(0,1)} &= \{(0,2)\}, & V_{(0,1,1);(1,1)} &= \{(4,2)\}, \\
V_{(0,1,1);(0,0)} &= \{(0,0)\}, & V_{(0,1,1);(1,0)} &= \{(4,0)\},
\end{align*}
and the corresponding gradient vector field
\begin{equation*}
2\pi\left(\frac{s}{1+s+t} - i_1\right)\frac{\del}{\del x^1} + 2\pi\left(\frac{t}{1+t} + \frac{t}{1+s+t} - i_2 \right)\frac{\del}{\del x^2}.
\end{equation*}
The degree of these $V_{(0,1,1);I}$ is zero. Also, since $|V_{(0,-1,-1);J}|\geq1$, we have
\begin{align*}
Mo_\cE(P)\left(L(0,-1,1),L(0,0,2)\right) &= \bigoplus_{\substack{0\leq i_1 \leq 1\\ 0\leq i_2 \leq 2\\ 0 \leq i_1+i_2 \leq 2}}\C\cdot V_{(0,1,1);(i_1,i_2)},\\
Mo_\cE(P)\left(L(0,0,2),L(0,-1,1)\right) &= 0
\end{align*}
By exchanging $i_1$ for $i_2$ and $s$ for $t$, we have
\begin{align*}
V_{(1,0,1);(0,1)} &= \{(0,4)\}, & V_{(1,0,1);(1,0)} &= \{(2,4)\}, \\
V_{(1,0,1);(0,0)} &= \{(0,0)\}, & V_{(1,0,1);(1,0)} &= \{(2,0)\}, & V_{(1,0,1);(2,0)} &= E_3,
\end{align*}
and the space of morphisms
\begin{align*}
Mo_\cE(P)\left(L(-1,0,1),L(0,0,2)\right) &= \bigoplus_{\substack{0\leq i_1 \leq 2\\ 0\leq i_2 \leq 1\\ 0 \leq i_1+i_2 \leq 2}}\C\cdot V_{(1,0,1);(i_1,i_2)},\\
Mo_\cE(P)\left(L(0,0,2),L(-1,0,1)\right) &= 0.
\end{align*}

\end{itemize}
Summarizing the above results, we obtain the following.

\begin{lem}
Each generator of the space of morphisms in $Mo_\cE(P)$ belongs to $\del P$ and its degree is zero.
\end{lem}

\subsection{Proof of the main theorem for $\check{X}=\Bl$.}\label{composite}
In order to prove the main theorem (Theorem \ref{main}), we construct a quasi-isomorphism $\iota:Mo_\cE(P)\to\cV_\cE'$ explicitly. For the objects, we assign $L(a,b,c)$ to $\widetilde{\cO}(a,b,c)$. For the morphisms, we obtain the following.

\begin{lem}\label{lem2-2}
The basis $\ee_{(a,b,c);I}$ of $H^0(\cV'_\cE(\widetilde{\cO}(a_1,b_1,c_1),\widetilde{\cO}(a_1+a,b_1+b,c_1+c)))$ are expressed as the form
\begin{equation*}
\ee_{(a,b,c);I}(x)=e^{-f_I}e^{\ii Iy},
\end{equation*}
where $e^{-f_I}$ is continuous on $P$ and smooth on $B$. Furthermore, the function $f_I$ satisfies
\begin{equation}\label{dfI}
df_I = \sum_{j=1}^2\frac{\del f_I}{\del x_j}dx_j,\ \ \ \frac{\del f_I}{\del x_j} = \frac{y^j_{(a,b,c)} - 2\pi i_j}{2\pi}
\end{equation}
in $B$ and $\min_{x\in P}f_I=0$. In particular, we have 
\begin{equation}\label{VI2}
\{x\in P\ |\ f_I(x)=0\}=V_{(a,b,c);I}.
\end{equation}
Thus, the correspondence $\iota:V_{(a,b,c);I}\mapsto \ee_{(a,b,c);I}$ gives a quasi-isomorphism
\begin{equation*}
\iota : Mo_\cE(P)(L(a_1,b_1,c_1),L(a_1+a,b_1+b,c_1+c))\to \cV'_\cE(\widetilde{\cO}(a_1,b_1,c_1),\widetilde{\cO}(a_1+a,b_1+b,c_1+c))
\end{equation*}
of complexes.
\end{lem}
\begin{proof}
By the expression (\ref{pre-e}), we see that the basis $\ee_{(a,b,c);I}$ of $H^0(\cV'_\cE(\widetilde{\cO}(a_1,b_1,c_1),\widetilde{\cO}(a_1+a,b_1+b,c_1+c)))$ are expressed as the form
\begin{equation*}
\ee_{(a,b,c);I} =c_{(a,b,c);(i_1,i_2)}(1+s)^\frac{-a}{2}(1 + t)^\frac{-b}{2}(1 + s + t)^\frac{-c}{2}s^\frac{i_1}{2}t^\frac{i_2}{2}e^{\ii(i_1y_1 + i_2y_2)}
\end{equation*}
where $s = e^{2 x_1},\ t= e^{2 x_2}$ and $c_{(a,b,c);(i_1,i_2)}$ is a constant. Then, the functions $e^{-f_I}$ and $f_I$ are given by
\begin{equation}\label{fI2}
\begin{split}
e^{-f_I}& = c_{(a,b,c);I}(1+s)^{-\frac{a}{2}}(1 + t)^{-\frac{b}{2}}(1 + s + t)^{-\frac{c}{2}}s^\frac{i_1}{2}t^\frac{i_2}{2},\\
f_I  &= \log\left((1+s)^\frac{a}{2}(1 + t)^\frac{b}{2}(1 + s + t)^\frac{c}{2}s^\frac{-i_1}{2}t^\frac{-i_2}{2}\right) + \mbox{const}  \\
&= \frac{a}{2}\log(1+e^{2 x_1}) + \frac{b}{2}\log(1+e^{2 x_2}) + \frac{c}{2}\log(1+e^{2 x_1}+e^{2 x_2}) -  i_1 x_1 -  i_2 x_2 + \mbox{const}.
\end{split}
\end{equation}
By this expression, the function $f_I$ satisfy (\ref{dfI}). For $(a,b,c)=(-1,0,1)$ and $(0,-1,1)$, we have
\begin{align*}
e^{-f_I} &= c_{(-1,0,1);I}\left(\frac{1+s}{1+s+t}\right)^{\frac{1}{2}} = c_{(-1,0,1);I}\left(\frac{1}{1+\frac{t}{1+s}}\right)^{\frac{1}{2}},\\
e^{-f_I} &= c_{(0,-1,1);I}\left(\frac{1+t}{1+s+t}\right)^{\frac{1}{2}} = c_{(0,-1,1);I}\left(\frac{1}{1+\frac{s}{1+t}}\right)^{\frac{1}{2}},
\end{align*}
since $i_1=i_2=0$.
Thus, the function $e^{-f_I}$ is continuous on $P$.
For $a,b,c\geq0$, since $0\leq i_1 \leq a+c,\ 0\leq i_2 \leq b+c$, the function $e^{-f_I}$ is also continuous on $P$. Furthermore, we obtain (\ref{VI2}) by evaluating $f_I$ in (\ref{fI2}) in each case directly. Since the DG structure of $Mo_\cE(P)$ is minimal, we obtain the last statement.
\end{proof}

\begin{lem}\label{lem2-3}
The correspondence $\iota$ forms a functor from $Mo_\cE(P)$ to $\cV'_\cE$.
\end{lem}
\begin{proof}
By lemma \ref{lem2-2}, the identity morphism in $Mo_\cE(P)$ is maped to that of in $\cV_\cE'$.\par
The composition in $\cV_\cE'$ or $H^0(\cV_\cE')$ is simply a product between functions, i.e.,
\begin{equation}\label{prod-e-pre}
e^{-f_{(\alpha,\beta,\gamma);I}}e^{\ii Iy} \otimes e^{-f_{(\alpha',\beta',\gamma');J}}e^{\ii Jy} \longmapsto e^{-\left(f_{(\alpha,\beta,\gamma);I}+f_{(\alpha',\beta',\gamma');J}\right)}e^{\ii (I+J)y},
\end{equation}
where $\alpha:=a_2-a_1,\ \beta:=b_2-b_1,\ \gamma:=c_2-c_1,\ \alpha':=a_3-a_2,\ \beta':=b_3-b_2,\ \gamma':=c_3-c_2$.
Here, let $v$ be a point where the function $f_{(\alpha,\beta,\gamma);I}+f_{(\alpha',\beta',\gamma')}$ is minimum. Then, we can rewrite (\ref{prod-e-pre}) by \begin{equation}\label{prod-e}
\ee_{(\alpha,\beta,\gamma);I} \otimes \ee_{(\alpha',\beta',\gamma');J} \mapsto e^{-\left(f_{(\alpha,\beta,\gamma);I}(v) + f_{(\alpha',\beta',\gamma');J}(v)\right)}\ee_{(\alpha+\alpha',\beta+\beta',\gamma+\gamma');I+J}.
\end{equation}\par
On the other hand, the composition in $Mo_\cE(P)$ is given by
\begin{equation*}
V_{(\alpha,\beta,\gamma);I}\otimes V_{(\alpha',\beta',\gamma');J} \mapsto e^{-A(\Gamma)}V_{(\alpha+\alpha',\beta+\beta',\gamma+\gamma');I+J},
\end{equation*}
where $\Gamma$ is the unique gradient tree starting at two points $v_{(\alpha,\beta,\gamma);I}\in V_{(\alpha,\beta,\gamma);I}$ and $v_{(\alpha',\beta',\gamma');J}\in V_{(\alpha',\beta',\gamma');J}$ and ending at a point $v_{(\alpha+\alpha',\beta+\beta',\gamma+\gamma');I+J}\in V_{(\alpha+\alpha',\beta+\beta',\gamma+\gamma');I+J}$, and $A(\Gamma)$ is the symplectic area associated to the lift of $\Gamma$. Since the Lagrangian $L(a,b,c)$ is locally the graph of $df_{(a,b,c);I}$ and $f_{(a,b,c);I}=0$ on $V_{(a,b,c)+I}$, we have
\begin{equation}\label{area}
A(\Gamma) = f_{(\alpha,\beta,\gamma);I}(v_{(\alpha+\alpha',\beta+\beta',\gamma+\gamma');I+J}) + f_{(\alpha',\beta',\gamma');J}(v_{(\alpha+\alpha',\beta+\beta',\gamma+\gamma');I+J}).
\end{equation}
By subsection \ref{morse homotopy}, for each generator $V$ of the space of the morphisms in $Mo_\cE(P)$, the gradient trajectories are lines on the adjacent edges of $V$. Using this fact, for $V_{(\alpha,\beta,\gamma);I},\ V_{(\alpha',\beta',\gamma');J}$ and $V_{(\alpha+\alpha',\beta+\beta',\gamma+\gamma');I+J}$, the gradient trees are as follows:
\begin{itemize}
\item In the case of  $V_{(\alpha,\beta,\gamma);I} \cap V_{(\alpha',\beta',\gamma');J} = \emptyset$, there exists a gradient tree if $v_{(\alpha,\beta,\gamma);I}$ and $v_{(\alpha',\beta',\gamma');J}$ are in the same edge. The image of the root edge is $\{v_{(\alpha+\alpha',\beta+\beta',\gamma+\gamma');I+J}\}$.

\item In the case of $V_{(\alpha,\beta,\gamma);I} \cap V_{(\alpha',\beta',\gamma');J} \neq \emptyset$, there exists a gradient tree only if $v_{(\alpha,\beta,\gamma);I} = v_{(\alpha',\beta',\gamma');J} = v_{(\alpha+\alpha',\beta+\beta',\gamma+\gamma');I+J}$. This gradient tree is trivial and the symplectic area equals zero.
\end{itemize}
In Appendix \ref{appendix1}, we give the list of compositions of morphisms and gradient trees. Since $v_{(\alpha+\alpha',\beta+\beta',\gamma+\gamma');I+J}$ coincide with the point $v$ in (\ref{prod-e}), the correspondence $\iota:V_{(a,b,c);I}\mapsto \ee_{(a,b,c);I}$ is compatible with the composition of morphisms:
\begin{equation*}
\fm_2 \circ (\iota\otimes\iota)\left(V_{(\alpha,\beta,\gamma);I}\otimes V_{(\alpha',\beta',\gamma');J}\right) =  \iota \circ \fm_2 \left(V_{(\alpha,\beta,\gamma);I}\otimes V_{(\alpha',\beta',\gamma');J}\right).
\end{equation*}
\end{proof}
By Lemma \ref{lem2-2} and Lemma \ref{lem2-3}, the correspondence $\iota:Mo_\cE(P)\to\cV_\cE'$ is a quasi-isomorphism between DG categories. On the other hand, the DG category $DG_\cE(\Bl)$ is quasi-isomorphic to $\cV_\cE'$ by subsection \ref{HU'}. Thus, the proof of the main theorem (Theorem \ref{main}) for $\Bl$ is completed.

\section{Blow-up of $\C P^2$ at three points.}\label{3}
In this section, we consider the homological mirror symmetry for the blow-up of $\C P^2$ at three points $\Bll$. The organization of this section is parallel to the section 3. For the toric Fano surfaces, we can discuss the homological mirror symmetry by using the Fukaya-Seidel category of the corresponding Landau-Ginzburg potential \cite{AKO,U,seidel}. In particular, for the blow-up of $\C P^2$ at three points, the homological mirror symmetry conjecture is proved by Ueda \cite{U}.

\subsection{Blow-up of $\C P^2$ at three points $\Bll$.}
The blow-up of $\C P^2$ at three points $\Bll$ is defined by
\begin{equation*}
\Bll:=\left\{([\alpha_0:\alpha_1],[\beta_0:\beta_1],[\gamma_0:\gamma_1],[\delta_0:\delta_1:\delta_2]) \in \left(\C P^1\right)^3\times \C P^2\ \middle|
\begin{array}{l}\ \gamma_0\delta_1 = \gamma_1\delta_0 \\ \beta_0\delta_2 = \beta_1\gamma_0 \\ \alpha_0\gamma_2 = \alpha_1\gamma_1
\end{array}\right\}
\end{equation*}
We take the following open covering $\{U_i\}_i$:
\footnotesize
\begin{align*}
U_0  =  \left\{([\alpha_0:\alpha_1],[\beta_0:\beta_1],[\gamma_0:\gamma_1],[\delta_0:\delta_1:\delta_2]) \in \Bll\ \middle|\ \delta_0 \neq 0\ ,\ \gamma_0 \neq \ 0\ ,\ \beta_0 \neq 0 ,\ \alpha_1 \neq 0\right\},\\
U_1  =  \left\{([\alpha_0:\alpha_1],[\beta_0:\beta_1],[\gamma_0:\gamma_1],[\delta_0:\delta_1:\delta_2]) \in \Bll\ \middle|\ \delta_0 \neq 0\ ,\ \gamma_0 \neq \ 0\ ,\ \beta_0 \neq 0 ,\ \alpha_0 \neq 0\right\},\\
U_2  =  \left\{([\alpha_0:\alpha_1],[\beta_0:\beta_1],[\gamma_0:\gamma_1],[\delta_0:\delta_1:\delta_2]) \in \Bll\ \middle|\ \delta_1 \neq 0\ ,\ \gamma_1 \neq \ 0\ ,\ \beta_0 \neq 0 ,\ \alpha_0 \neq 0\right\},\\
U_3  =  \left\{([\alpha_0:\alpha_1],[\beta_0:\beta_1],[\gamma_0:\gamma_1],[\delta_0:\delta_1:\delta_2]) \in \Bll\ \middle|\ \delta_1 \neq 0\ ,\ \gamma_1 \neq \ 0\ ,\ \beta_1 \neq 0 ,\ \alpha_0 \neq 0\right\},\\
U_4  =  \left\{([\alpha_0:\alpha_1],[\beta_0:\beta_1],[\gamma_0:\gamma_1],[\delta_0:\delta_1:\delta_2]) \in \Bll\ \middle|\ \delta_2 \neq 0\ ,\ \gamma_1 \neq \ 0\ ,\ \beta_1 \neq 0 ,\ \alpha_1 \neq 0\right\},\\
U_5  =  \left\{([\alpha_0:\alpha_1],[\beta_0:\beta_1],[\gamma_0:\gamma_1],[\delta_0:\delta_1:\delta_2]) \in \Bll\ \middle|\ \delta_2 \neq 0\ ,\ \gamma_0 \neq \ 0\ ,\ \beta_1 \neq 0 ,\ \alpha_1 \neq 0\right\}.
\end{align*}
\normalsize
The local coordinates in $U_i$ are
\begin{equation*}
\begin{split}
&(u_0,v_0) := \left(\frac{\alpha_0}{\alpha_1},\frac{\beta_1}{\beta_0}\right),\ 
(u_1,v_1) := \left(\frac{\alpha_1}{\alpha_0},\frac{\gamma_1}{\gamma_0}\right),\ 
(u_2,v_2) := \left(\frac{\gamma_0}{\gamma_1},\frac{\beta_1}{\beta_0}\right),\\
&(u_3,v_3) := \left(\frac{\alpha_1}{\alpha_0},\frac{\beta_0}{\beta_1}\right),\ 
(u_4,v_4) := \left(\frac{\gamma_0}{\gamma_1},\frac{\alpha_0}{\alpha_1}\right),\ 
(u_5,v_5) := \left(\frac{\beta_0}{\beta_1},\frac{\gamma_1}{\gamma_0}\right),
\end{split}
\end{equation*}
and the coordinate transformations turn out to be
\begin{equation}\label{ct3}
\begin{array}{ccc}
(u_1,v_1) = \left(\frac{1}{u_0},u_0v_0\right), & (u_2,v_2) = \left(\frac{1}{u_0v_0},v_0\right), & (u_3,v_3)=\left(\frac{1}{u_0},\frac{1}{v_0}\right),\\
(u_4,v_4) = \left(\frac{1}{u_0v_0},u_0\right), & (u_5,v_5) = \left(\frac{1}{v_0},u_0v_0\right). & 
\end{array}
\end{equation}
There exists natural projections $\ \pr_1,\pr_2,\pr_3:\Bll\to\C P^1$ and $\pr_4:\Bll\to\C P^2$.
Using these projections, we define the K\"ahler form of $\Bll$ by
\begin{equation*}
\check{\omega}:= C_1\pr_1^*\left(\omega_{\C P^1}\right) + C_2\pr_2^*\left(\omega_{\C P^1}\right) +  C_3\pr_3^*\left(\omega_{\C P^1}\right), + C_4\pr_4^*\left(\omega_{\C P^2}\right),
\end{equation*}
where each $C_i$ is real positive constant and $\omega_{\C P^n}$ is the Fubini-Study form on $\C P^n$. Correspondingly, the moment map $\mu:\Bll\to \R^2$ is given by
\footnotesize
\begin{equation*}
\begin{split}
&\mu\left([\alpha_0:\alpha_1],[\beta_0:\beta_1],[\gamma_0:\gamma_1],[\delta_0:\delta_1:\delta_2]\right):=\\
&\left(\frac{2C_1|\alpha_0|^2}{|\alpha_0|^2+|\alpha_1|^2} + \frac{2C_3|\gamma_1|}{|\gamma_0|^2 + |\gamma|^2} + \frac{2C_4|\delta_1|^2}{|\delta_0|^2+|\delta_1|^2+|\delta_2|^2} , \frac{2C_1|\alpha_1|^2}{|\alpha_0|^2+|\alpha_1|^2} + \frac{2C_2|\beta_1|^2}{|\beta_0|^2+|\beta_1|^2} + \frac{2C_4|\delta_2|^2}{|\delta_0|^2+|\delta_1|^2+|\delta_2|^2}\right)
\end{split}
\end{equation*}
\normalsize
The image $\mu(\Bll)$ is called the moment polytope associated to $\check{\omega}$, which is a hexagon. Namely, the moment polytope is
\begin{equation*}
P:=\left\{(X,Y)\in\R^2\ \middle|\begin{array}{l}\ 0\leq X \leq 2(C_1+C_3+C_4\\ 0\leq Y \leq 2(C_1+C_2+C_4)\\ 2C_1\leq X+Y \leq 2(C_1+C_2+C_3+C_4)\end{array}\right\}.
\end{equation*}
We denote each edge of $P$ by $E_i$  (see Figure \ref{mp3}). For simplicity, we fix $C_1=C_2=C_3=C_4=1$ since the structure of the category $Mo(P)$ we shall construct is independent of these constants.
\begin{figure}[h]
\center
\begin{tikzpicture}
\draw(0,2)-- node[auto=right]{$E_2$} (2,0) -- node[auto=right]{$E_3$} (6,0)-- node[auto=right]{$E_4$} (6,2)-- node[auto=right]{$E_5$} (2,6)-- node[auto=right]{$E_6$} (0,6)-- node[auto=right]{$E_1$} cycle;
\end{tikzpicture}
\caption{The moment polytope of $\Bll.$}\label{mp3}
\end{figure}

Now, we set
\begin{equation*}
\check{M}:=U_0\cap U_1\cap U_2\cap U_3\cap U_4\cap U_5,\ \ \  B:=\mathrm{Int}P,
\end{equation*}
and we treat $\check{M}$ as a torus fibration $\mu|_{\check{M}}:\check{M}\to B$. Hereafter, we fix $U:=U_0,\ u:=u_0,\ v:=v_0$. Then, $\check{M}$ is equipped with an affine structure by $u=e^{x_1+\ii y_1}$ and $v=e^{x_2+\ii y_2}$, where $y_1$ and $y_2$ are the fiber coordinates of $\check{M}$. The K\"ahler form $\omega$ is expressed as
\begin{equation*}
\begin{split}
\omega = &\frac{4(st(1+t)dx_1\wedge dy_1 - stdx_1\wedge dy_2 - stdx_2\wedge dy_1 +(1+s)tdx_2\wedge dy_2)}{(1+st+t)^2} + \frac{4 sdx_1\wedge dy_1}{(1+s)^2} \\&+ \frac{4 tdx_2\wedge dy_2}{(1+t)^2} + \frac{4 st(dx_1\wedge dy_1 + dx_1\wedge dy_1 + dx_1\wedge dy_2 + dx_2\wedge dy_1 + dx_2\wedge dy_2)}{(1+st)^2} 
\end{split}
\end{equation*}
on $U$, where $s:=|u|^2 = e^{2x_1}$ and $t:=|v^2|= e^{2x_2}$. By this expression, the inverse matrix $\{g^{ij}\}$ of the metric $\{g_{ij}\}$ on $B$ is given by
\begin{equation*}
\begin{pmatrix}
g^{11} & g^{12}\\
g^{21} & g^{22}
\end{pmatrix}
=
\begin{pmatrix}
\frac{4 s}{(1+s)^2} +\frac{4 st}{(1+st)^2} + \frac{4 (1+t)st}{(1+st+t)^2} & -\frac{4 st}{(1+st)^2} -\frac{4 st}{(1+st+t)^2} \\
-\frac{4 st}{(1+st)^2} -\frac{4 st}{(1+st+t)^2} & \frac{4 t}{(1+t)^2} +\frac{4 st}{(1+st)^2} + \frac{4 (1+s)t}{(1+st+t)^2}
\end{pmatrix}.
\end{equation*}
Now, we put $\psi:=\log(1+e^{2 (x_1+x_2)}+e^{2 x_2}) + \log(1+e^{2 x_1}) + \log(1+e^{2 x_2}) + \log(1+e^{2 (x_1+x_2)})$ and then $\frac{\del^2 \psi}{\del x_i\del x_j}=g^{ij}$. Thus the dual coordinate $(x^1,x^2)$ is obtained by
\footnotesize
\begin{equation*}
\begin{split}
(&x^1,x^2)
:=
\left(\frac{\del\psi}{\del x_1} , \frac{\del\psi}{\del x_2}\right)\\
&=
\left(\frac{2e^{2 x_1}}{1+e^{2 x_1}} + \frac{2e^{2(x_1 + x_2)}}{1+e^{2(x_1 + x_2)}} + \frac{2e^{2 (x_1 + x_2)}}{1+e^{2 (x_1 + x_2)}+e^{2 x_2}}, \frac{2e^{2 x_2}}{1+e^{2 x_2}} + \frac{2e^{2 (x_1 + x_2)}}{1+e^{2 (x_1 + x_2)}}  + \frac{2e^{2 x_2}}{1+e^{2 (x_1 + x_2)}+e^{2 x_2}}\right).
\end{split}
\end{equation*}
\normalsize

\subsection{Holomorphic line bundles $\cO(a,b,c,d)$ over $\Bll$.}
Any line bundle over $\Bll$ is constructed from a toric divisor, which is a linear combination of the following divisors:
\begin{align*}
 D_1&=(\alpha_0=\gamma_1=\delta_1=0) ,&
 D_2&=(\beta_1=\gamma_1=\delta_1=\delta_2=0) ,\\
 D_3&=(\alpha_1=\beta_1=\delta_2=0),&
 D_4&=(\alpha_1=\gamma_0=\delta_0=\delta_2=0) ,\\
 D_5&=(\beta_0=\gamma_0=\delta_0=0),& 
 D_6&=(\alpha_0=\beta_0=\delta_0=\delta_1=0).
\end{align*}
The corresponding Cartier divisors are as follows:
\begin{align*}
D_1 :& \{(U_0,u_0),(U_1,1),(U_2,1),(U_3,1),(U_4,1),(U_5,v_5)\},\\
D_2 :& \{(U_0,v_0),(U_1,v_1),(U_2,1),(U_3,1),(U_4,1),(U_5,1)\},\\
D_3 :& \{(U_0,1),(U_1,u_1),(U_2,v_2),(U_3,1),(U_4,1),(U_5,1)\},\\
D_4 :& \{(U_0,1),(U_1,1),(U_2,u_2),(U_3,u_3),(U_4,1),(U_5,1)\},\\
D_5 :& \{(U_0,1),(U_1,1),(U_2,1),(U_3,v_3),(U_4,u_4),(U_5,1)\},\\
D_6 :& \{(U_0,1),(U_1,1),(U_2,1),(U_3,1),(U_4,v_4),(U_5,u_5)\}.
\end{align*}
Using the coordinate transformations (\ref{ct3}), the transition functions of the line bundle $\cO(D_i)$ corresponding to Cartier divisor $D_i$ are as follows:
\begin{align*}
D_1:& \phi_{01}=u_0,  &  &\phi_{02}=u_0,  &  &\phi_{03}=u_0,  &  &\phi_{04}=u_0, & &\phi_{05}=\frac{u_0}{v_5}=\frac{1}{v_0},\\
D_2:& \phi_{01}=\frac{v_0}{v_1}=\frac{1}{u_0},  &  &\phi_{02}=v_0,  &  &\phi_{03}=v_0,  &  &\phi_{04}=v_0, & &\phi_{05}=v_0,\\
D_3:& \phi_{01}=\frac{1}{u_1}=u_0,  &  &\phi_{02}=\frac{1}{v_2}=\frac{1}{v_0},  &  &\phi_{03}=1,  &  &\phi_{04}=1, & &\phi_{05}=1,\\
D_4:& \phi_{01}=1,  &  &\phi_{02}=\frac{1}{u_2}=u_0v_0,  &  &\phi_{03}=\frac{1}{u_3}=u_0,  &  &\phi_{04}=1, & &\phi_{05}=1,\\
D_5:& \phi_{01}=1,  &  &\phi_{02}=1,  &  &\phi_{03}=\frac{1}{v_3}=v_0,  &  &\phi_{04}=\frac{1}{u_4}=u_0v_0, & &\phi_{05}=1,\\
D_6:& \phi_{01}=1,  &  &\phi_{02}=1,  &  &\phi_{03}=1,  &  &\phi_{04}=\frac{1}{v_4}=\frac{1}{u_0}, & &\phi_{05}=\frac{1}{u_5}=v_0.
\end{align*}
Thus, we see that $\cO(D_1+D_2)=\cO(D_4+D_5)$ and $\cO(D_2+D_3)=\cO(D_5+D_6)$, then any line bundle over $\Bll$ is generated by $(D_1+D_6,D_2+D_3,D_4+D_5,D_4+D_5+D_6)$. For more details, see \cite{fulton93toric,coxtor}. \par
On the other hand, we have line bundles $\pr_1^*(\cO_{\C P^1}(1)),\ \pr_2^*(\cO_{\C P^1}(1)),\ \pr_3^*(\cO_{\C P^1}(1))$ and $\pr_4^*(\cO_{\C P^2}(1))$. By comparing the transition functions of these line bundles with those of $\cO(D_i)$, we can identify $ \pr_1^*(\cO_{\C P^1}(1))=\cO(D_1+D_6),\ \pr_2^*(\cO_{\C P^1}(1))=\cO(D_2+D_3),\ \pr_3^*(\cO_{\C P^1}(1))=\cO(D_4+D_5)$ and $\pr_4^*(\cO_{\C P^2}(1))=\cO(D_4+D_5+D_6)$. Furthermore, the connection one-forms of $\pr_1^*(\cO_{\C P^1}(1)),\ \pr_2^*(\cO_{\C P^1}(1)),\ \pr_3^*(\cO_{\C P^1}(1))$ and $\pr_4^*(\cO_{\C P^2}(1))$ are expressed as
\begin{equation*}
\begin{split}
\pr_1^*(A_{\C P^1}) &= -\frac{\overline{u}du}{1+u\overline{u}}=-\frac{s( dx_1+\ii dy_1)}{1+s},\\
\pr_2^*(A_{\C P^1}) &= -\frac{\overline{v}dv}{1+v\overline{v}}=-\frac{t( dx_2+\ii dy_2)}{1+t},\\
\pr_3^*(A_{\C P^1}) &= -\frac{v\overline{uv}du+u\overline{uv}dv}{1+uv\overline{uv}}=-\frac{st((dx_1 + dx_2)+\ii( dy_1 + dy_2))}{1+st},\\
\pr_4^*(A_{\C P^2}) &= -\frac{v\overline{uv}du+(u\overline{uv}+\overline{v})dv}{1+uv\overline{uv}+v\overline{v}}=-\frac{st( dx_1+\ii dy_1) + (1+s)t( dx_2+\ii dy_2)}{1+st+t},
\end{split}
\end{equation*}
on $U$. Thus, for $a,b,c,d\in\Z$, the connection one-form $A_{(a,b,c,d)}$ of $\cO(a,b,c,d):=\cO(a(D_1+D_6) + b(D_2+D_3) +c(D_4+D_5) + d(D_4+D_5+D_6))$ is given by
\begin{equation}\label{Aabcd}
\begin{split}
A_{(a,b,c)}:=&-a\frac{s( dx_1+\ii dy_1)}{1+s} - b\frac{t( dx_2+\ii dy_2)}{1+t} - c\frac{st((dx_1 + dx_2)+\ii( dy_1 + dy_2))}{1+st}\\ &- d\frac{st( dx_1+\ii dy_1) + (1+s)t( dx_2+\ii dy_2)}{1+st+t}
\end{split}
\end{equation}

\subsection{DG category $DG(\Bll)$ and full strongly exceptional collection $\cE$.}\label{DGCP23}
We consider $DG(\Bll)$ as defined in subsection \ref{lineDG}, where the objects are the line bundles $\cO(a,b,c,d)$ with the connection one-form $A_{(a,b,c,d)}$. The DG structure of $DG(\Bll)$ is given by the way described in subsection \ref{lineDG}. Since each $\cO(a,b,c,d)$ is a line bundle, we have
\begin{equation*}
\begin{split}
DG(\Bll)&(\cO(a_1,b_1,c_1,d_1),\cO(a_2,b_2,c_2,d_2)\\
&\simeq DG(\Bll)\left(\cO,\cO(a_2-a_1,b_2-b_1,c_2-c_1,d_2-d_1)\right).
 \end{split}
\end{equation*}
In particular,  the zero-th cohomology of $DG(\Bll)(\cO(0,0,0,0),\cO(a,b,c,d))$ is the space $\Gamma(\Bll,\cO(a,b,c,d))$ of holomorphic global sections.
\begin{equation*}
\begin{split}
H^0(DG(\Bll)&(\cO(a_1,b_1,c_1,d_1),\cO(a_2,b_2,c_2,d_2)))\\
&\simeq H^0(DG(\Bll)(\cO(0,0,0,0),\cO(a_2-a_1,b_2-b_1,c_2-c_1,d_2-d_1))\\
&\simeq \Gamma(\Bll,\cO(a_2-a_1,b_2-b_1,c_2-c_1,d_2-d_1)) .
\end{split}
\end{equation*}
Using the coordinates $(u,v)$ for $U$, the generators of $\Gamma(\Bll,\cO(a,b,c,d))$ are expressed as
\begin{equation*}
\psi_{(i_1,i_2)}:=u^{i_1}v^{i_2},
\end{equation*}
where $0\leq i_1\leq a+c+d,\ 0\leq i_2\leq b+c+d$ and $-a\leq-i_1+i_2\leq b+d$. \par
As in subsection \ref{DGCP22}, we next consider a full strongly exceptional collection of the derived category of coherent sheaves $D^b(Coh(\Bll))\simeq Tr(DG(\Bll))$. The exceptional divisors of blow-ups are $D_2,D_4$ and $D_6$. The corresponding line bundles are
\begin{equation*}
\begin{split}
\cO(D_2) &= \cO(-D_1 + D_4 + D_5) = \cO(-1,0,0,1),\\
\cO(D_4) &= \cO( D_4 + D_5 + D_6 - (D_5 + D_6)) = \cO(0,0,-1,1),\\
\cO(D_6) &= \cO( D_4 + D_5 + D_6 - (D_4 + D_5)) = \cO(0,-1,0,1),
\end{split}
\end{equation*}
respectively. Also, the generator of $\mathrm{Pic}(\C P^2)$ is $\cO(0,0,0,1)$.
We put
\begin{equation*}
\cE:=(\cO,\cO(-1,0,0,1),\cO(0,-1,0,1),\cO(0,0,-1,1),\cO(0,0,0,1),\cO(0,0,0,2)).
\end{equation*}
We can identify this collection $\cE$ with a full strongly exceptional collection of Hille-Perling's construction in \cite[Theorem 5.7]{hille-perling11}. We denote by $DG_\cE(\Bll)$ be the full subcategory of $DG(\Bll)$ consisting of $\cE$.

\subsection{Lagrangian sections $L(a,b,c,d)$.}\label{lagabcd}
We discuss Lagrangian sections $L(a,b,c,d)$ of the dual torus fibration $M\to B$ corresponding to the line bundle $\cO(a,b,c,d)$. Twisting the fibers of $\cO(a,b,c,d)$ by the isomorphisms$\Psi_{(a,b,c,d)}:=(1+s)^{\frac{a}{2}}(1+t)^{\frac{b}{2}}(1+st)^{\frac{c}{2}}(1+st+t)^{\frac{d}{2}}$, we can remove the $dx$ term from $A_{(a,b,c,d)}$ as follows:
\begin{equation*}
\begin{split}
&\Psi_{(a,b,c,d)}^{-1}(d+A_{(a,b,c,d)})\Psi_{(a,b,c,d)}\\ &\ = d - \ii\left( a\frac{s}{1+s} + c\frac{st}{1+st} + d\frac{st}{1+st+t}\right)dy_1 - \ii\left( b\frac{t}{1+t} + c\frac{st}{1+st} + d\frac{st+t}{1+st+t}\right)dy_2,
\end{split}
\end{equation*}
where $s=e^{2x_1}$ and $t=e^{2x_2}$. Thus, the Lagrangians section $L(a,b,c,d)$ corresponding to $\cO(a,b,c,d)$ is obtained as the coefficients of $dy$ terms:
\begin{equation}\label{Lagrange-section3}
\begin{pmatrix}
y^1 \\ y^2
\end{pmatrix}
=2\pi
\begin{pmatrix}
 a\frac{s}{1+s} +  c\frac{st}{1+st} + d\frac{st}{1+st+t} \\  b\frac{t}{1+t} +  c\frac{t}{1+st} + d\frac{st+t}{1+st+t}
\end{pmatrix}.
\end{equation}
By this expression, we can extend $L(a,b,c,d)$ on $B$ to that on $P$ smoothly.
The potential function of this Lagrangian section is given by
\begin{equation*}
f = \pi a\log(1+s) + \pi b\log(1+t) + \pi c\log(1+st) + \pi d\log(1+st+t)
\end{equation*}
The collection of the Lagrangian sections corresponding to the full strongly exceptional collection $\cE$ given in subsection \ref{DGCP23} is denoted by the same symbol $\cE$, i.e.,
\begin{equation*}
\cE=\left(L(0,0,0,0),L(-1,0,0,1),L(0,-1,0,1),L(0,0,-1,1),L(0,0,0,1),L(0,0,0,2)\right).
\end{equation*}

\subsection{Cohomologies $H(\cV')$.}\label{HV'}
Let $\cV$ be a DG category of holomorphic line bundles over $\check{M}$. We consider the faithful functor $\cI:DG(\Bll)\to\cV$. Let $\widetilde{\cO}(a,b,c,d)$ denote the line bundle $\cO(a,b,c,d)$ twisted by $\Psi_{(a,b,c,d)}$. This functor assigns $\cO(a,b,c,d)$ to $\widetilde{\cO}(a,b,c,d)$. Also, each generator $\psi_{(i_1,i_2)}\in DG(\Bll)(\cO,\cO(a,b,c,d))$ is sent to be $\Psi^{-1}_{(a,b,c,d)}\psi_{(i_1,i_2)}\in\cV(\widetilde{\cO},\widetilde{\cO}(a,b,c,d))$, i.e.,
\begin{equation}\label{pre-e2}
\begin{split}
\Psi^{-1}_{(a,b,c,d)}\psi_{(i_1,i_2)} &= (1+s)^{-\frac{a}{2}}(1+t)^{-\frac{b}{2}}(1+st)^{-\frac{c}{2}}(1+st+t)^{-\frac{d}{2}}u^{i_1}v^{i_2}\\
 &=(1+s)^{-\frac{a}{2}}(1 + t)^{-\frac{b}{2}}(1 + st)^{-\frac{c}{2}}(1+st+t)^{-\frac{d}{2}}s^\frac{i_1}{2}t^\frac{i_2}{2}e^{\ii(i_1y_1 + i_2y_2)}.
\end{split}
\end{equation}
The images are denoted by $\cV':=\cI(DG(\Bll))$ and $\cV_\cE':=\cI(DG_\cE(\Bll))$, respectively. Then, the basis of $H^0(\cV(\widetilde{\cO},\widetilde{\cO}(a,b,c,d))$ are given by (\ref{pre-e2}). If the functions $\Psi^{-1}_{(a,b,c,d)}\psi_{(i_1,i_2)}$ on $B$ extend to that on $P$ smoothly, then we rescale each basis $\Psi^{-1}_{(a,b,c,d)}\psi_{(i_1,i_2)}$ by multiplying a positive number and denote it by $\ee_{(a,b,c,d);(i_1,i_2)}$ so that
\begin{equation*}
\max_{x\in P}|\ee_{(a,b,c,d);(i_1,i_2)}(x)|=1.
\end{equation*}

\subsection{Weighted Morse homotopy $Mo_{\cE}(P)$.}\label{morse homotopy2}
For the moment polytope $P$ of $\Bll$, we construct the full subcategory $Mo_\cE(P)\subset Mo(P)$ consisting of $\cE$, where $Mo(P)$ is defined in subsection \ref{Mo(P)}. The objects of $Mo(P)$ are Lagrangian sections $L(a,b,c,d)$ obtained in subsection \ref{lagabcd}. Since we have
\begin{equation*}
Mo(P)(L(a_1,b_1,c_1,d_1),L(a_2,b_2,c_2,d_2)) \simeq Mo(P)(L(0,0,0),L(a_2-a_1,b_2-b_1,c_2-c_1,d_2-d_1)),
\end{equation*}
we concentrate on computing the space $Mo_\cE(P)(L(0,0,0,0),L(a,b,c,d))$. As considered in subsection \ref{glag}, the intersections of $L(0,0,0,0)$ and $L(a,b,c,d)$ are expressed as
\begin{equation}\label{eq3}
\begin{split}
2\pi
\begin{pmatrix}
i_1 \\ i_2
\end{pmatrix}
=
2\pi\begin{pmatrix}
 a\frac{s}{1+s} +  c\frac{st}{1+st} + d\frac{st}{1+st+t} \\  b\frac{t}{1+t} +  c\frac{t}{1+st} + d\frac{st+t}{1+st+t}
\end{pmatrix}
\end{split}
\end{equation}
in the covering space of $\pi:\bar{M}\to P$, where $(i_1,i_2)\in\Z^2$, $s=e^{2x_1}$ and $t=e^{2x_2}$. If there exists a nonempty intersection, then we set $V_{(a,b,c,d);(i_1,i_2)}:=\pi(L(0,0,0,0)\cap L(a,b,c,d))$. We check  that $V_{(a,b,c,d);(i_1,i_2)}$ satisfies the conditions (M1) and (M2) given in subsection \ref{Mo(P)}. In this cace, the gradient vector field associated to $V_{(a,b,c,d);(i_1,i_2)}$ is of the form
\footnotesize
\begin{equation}\label{grad3}
2\pi\left(a\frac{s}{1+s} +  c\frac{st}{1+st} + d\frac{st}{1+st+t} - i_1\right)\frac{\del}{\del x^1} + 2\pi\left(b\frac{t}{1+t} +  c\frac{st}{1+st} + + d\frac{st+t}{1+st+t} - i_2 \right)\frac{\del}{\del x^2}.
\end{equation}
\normalsize
On the other hand, for the space of the opposite directional morphisms, we have
\begin{equation*}
Mo(P)(L(a,b,c,d),L(0,0,0,0)) \cong Mo(P)(L(0,0,0,0),L(-a,-b,-c,-d)).
\end{equation*}
Thus, the connected component $V_{(a,b,c,d);(i_1,i_2)}$ coincides with the connected component $V_{(-a,-b,-c,-d);(-i_1,-i_2)}$. The gradient vector field associated with $V_{(-a,-b,-,c,-d);(-i_1,-i_2)}$ is the opposite direction of (\ref{grad3}).\par

\begin{figure}[h]
\center
\begin{tikzpicture}
\draw(0,2)-- node[auto=left]{$E_2$} node[auto=right]{$0\leq s \leq\infty,t=0$} (2,0) --node[auto=left]{$E_3$} node[auto=right]{$s=\infty,t=0,0\leq st\leq\infty$} (6,0)-- node[auto=left]{$E_4$} node[auto=right]{$s=\infty,t\leq\infty,st=\infty$} (6,2)-- node[auto=left]{$E_5$} node[auto=right]{$s\leq\infty,t=\infty,st=\infty$} (2,6)--node[auto=left]{$E_6$} node[auto=right]{$s=0,t=\infty,0\leq st\leq\infty$} (0,6)--node[auto=left]{$E_1$} node[auto=right]{$s=0,0\leq t\leq\infty$} cycle;
\draw(6,2) node[right]{$s=t=st=\infty$};
\end{tikzpicture}
\caption{The moment polytope$\Bll$.}
\label{poly3}
\end{figure}
For all $L$ and $L'$, we compute generators of $Mo_\cE(P)(L,L')$.

\begin{itemize}
\item $Mo_\cE(P)\left(L(0,0,0,0),L(-1,0,0,1)\right)$:\ \par
The intersection of $L(0,0,0,0)$ and $L(-1,0,0,1)$ is expressed as
\begin{equation*}
i_1 = -\frac{s}{1+s} + \frac{st}{1+st+t},\ \ \ 
i_2 = \frac{st+t}{1+st+t}.
\end{equation*}
Then, since $(x^1,x^2)=\left(\frac{2s}{1+s} + \frac{2st}{1+st} + \frac{2st}{1+st+t}, \frac{2t}{1+t} + \frac{2st}{1+st}  + \frac{2(st+t)}{1+st+t}\right)$,
we obtain the connected components
\begin{equation*}
V_{(-1,0,0,1);(0,0)}   = \{(0,2)\},\ \ \ 
V_{(-1,0,0,1);(-1,0)}  = \{(2,0)\},\ \ \ 
V_{(-1,0,0,1);(0,1)}   = E_4 \cup E_5 \cup E_6,
\end{equation*}
where $E_i$ is the edge of the moment polytope associated to each toric divisor $D_i$ (see Figure \ref{poly3}). For each $(i_1,i_2)$, the gradient vector field is given by
\begin{equation*}
2\pi\left(-\frac{s}{1+s} + \frac{st}{1+st+t} - i_1\right)\frac{\del}{\del x^1} + 2\pi\left(\frac{st+t}{1+st+t} - i_2 \right)\frac{\del}{\del x^2}.
\end{equation*}
The degrees of the connected components are $|V_{(-1,0,0,1);(0,1)}|=0$ and $|V_{(-1,0,0,1);(0,0)}|=|V_{(-1,0,0,1);(-1,0)}|=1$. In fact, for $V_{(-1,0,0,1);(0,0)}$ and $V_{(-1,0,0,1);(-1,0)}$, the stable manifold is $\{(X,Y)\in\R^2\ |\ X+Y=2\}$. Since the condition (M2) in subsection \ref{Mo(P)} the components $V_{(-1,0,0,1);(0,0)}$ and $V_{(-1,0,0,1);(-1,0)}$ can not be the generators. Thus, the connected component $V_{(-1,0,0,1);(0,1)}$ is the only generator of $Mo_\cE(P)\left(L(0,0,0,0),L(-1,0,0,1)\right)$ of degree zero:
\begin{equation*}
Mo_\cE(P)\left(L(0,0,0,0),L(-1,0,0,1)\right) = \C\cdot V_{(-1,0,0,1);(0,1)},\ \ \ |V_{(-1,0,0,1);(0,1)}|=0.
\end{equation*}
On the other hand, for the space $Mo_\cE(P)\left(L(-1,0,0,1),L(0,0,0,0)\right)$, the degree of any connected components is positive. For this reason, we have
\begin{equation*}
Mo_\cE(P)\left(L(-1,0,0,1),L(0,0,0,0)\right) = 0.
\end{equation*}\par

\item $Mo_\cE(P)\left(L(0,0,0,0),L(0,-1,0,1)\right)$:\ \par
The intersection of $L(0,0,0,0)$ and $L(-1,0,0,1)$ is expressed as
\begin{equation*}
i_1 = \frac{st}{1+st+t},\ \ \ 
i_2 = -\frac{t}{1+t} + \frac{st+t}{1+st+t}.
\end{equation*}
Then, we obtain the connected components
\begin{equation*}
V_{(0,-1,0,1);(0,0)}  = E_1 \cup E_2 \cup E_6,\ \ \ 
V_{(0,-1,0,1);(1,0)}  = \{(6,2)\},\ \ \ 
V_{(0,-1,0,1);(1,1)}  = \{(6,0)\},
\end{equation*}
and the corresponding gradient vector field
\begin{equation*}
2\pi\left(\frac{st}{1+st+t} - i_1\right)\frac{\del}{\del x^1} + 2\pi\left(-\frac{t}{1+t} + \frac{st+t}{1+st+t} - i_2 \right)\frac{\del}{\del x^2}.
\end{equation*}
The degrees of the connected components are $|V_{(0,-1,0,1);(0,0)}|=0$ and $|V_{(0,-1,0,1);(1,0)}|=|V_{(0,-1,0,1);(1,1)}|=1$. The components $V_{(0,-1,0,1);(1,0)}$ and $V_{(0,-1,0,1);(1,1)}$ do not satisfy the condition (M2) and can not be the generators. Thus, the connected component $V_{(0,-1,0,1);(0,0)}$ is the only generator of $Mo_\cE(P)\left(L(0,0,0,0),L(0,-1,0,1)\right)$ of degree zero:
\begin{equation*}
Mo_\cE(P)\left(L(0,0,0,0),L(0,-1,0,1)\right) = \C\cdot V_{(0,-1,0,1);(0,1)},\ \ \ |V_{(0,-1,0,1);(0,0)}|=0.
\end{equation*}
On the other hand, for the space $Mo_\cE(P)\left(L(0,-1,0,1),L(0,0,0,0)\right)$, the degree of any connected components is positive. For this reason, we have
\begin{equation*}
Mo_\cE(P)\left(L(0,-1,0,1),L(0,0,0,0)\right) = 0.
\end{equation*}\par

\item $Mo_\cE(P)\left(L(0,0,0,0),L(0,0,-1,1)\right)$:\ \par
The intersection of $L(0,0,0,0)$ and $L(0,0,-1,1)$ is expressed as
\begin{equation*}
i_1 = -\frac{st}{1+st} + \frac{st}{1+st+t},\ \ \ 
i_2 = -\frac{st}{1+st} + \frac{st+t}{1+st+t}.
\end{equation*}
Then, we obtain the connected components
\begin{equation*}
V_{(0,0,-1,1);(0,0)}   = E_2 \cup E_3 \cup E_4,\ \ \ 
V_{(0,0,-1,1);(0,1)}   = \{(0,6)\},\ \ \ 
V_{(0,0,-1,1);(-1,0)}  = \{(2,6)\},
\end{equation*}
and the corresponding gradient vector field
\begin{equation*}
2\pi\left( - \frac{st}{1+st} + \frac{st}{1+st+t} - i_1\right)\frac{\del}{\del x^1} + 2\pi\left(-\frac{st}{1+st} + \frac{st+t}{1+st+t} - i_2 \right)\frac{\del}{\del x^2}.
\end{equation*}
The degrees of the connected components are $|V_{(0,0,-1,1);(0,0)}|=0$ and $|V_{(0,0,-1,1);(-1,0)}|=|V_{(0,0,-1,1);(0,1)}|=1$. The components $V_{(0,0,-1,1);(-1,0)}$ and $V_{(0,0,-1,1);(0,1)}$ do not satisfy the condition (M2) and can not be the generators. Thus, the connected component $V_{(0,0,-1,1);(0,0)}$ is the only generator of $Mo_\cE(P)\left(L(0,0,0,0),L(0,0,-1,1)\right)$ of degree zero:
\begin{equation*}
Mo_\cE(P)\left(L(0,0,0,0),L(0,0,-1,1)\right) = \C\cdot V_{(0,0,-1,1);(0,0)},\ \ \ |V_{(0,0,-1,1);(0,0)}|=0,
\end{equation*}
On the other hand, for the space $Mo_\cE(P)\left(L(0,-1,0,1),L(0,0,0,0)\right)$, the degree of any connected components is positive. For this reason, we have
\begin{equation*}
Mo_\cE(P)\left(L(0,0,-1,1),L(0,0,0,0)\right) = 0.
\end{equation*}\par

\item $Mo_\cE(P)\left(L(0,0,0,0),L(0,0,0,1)\right)$ and $Mo_\cE(P)\left(L(0,0,0,1),L(0,0,0,2)\right)$:\ \par
The intersection of $L(0,0,0,0)$ and $L(0,0,0,1)$ is expressed as
\begin{equation*}
i_1 = \frac{st}{1+st+t},\ \ \ 
i_2 = \frac{st+t}{1+st+t}.
\end{equation*}
Then, we obtain the connected components
\begin{equation*}
V_{(0,0,0,1);(0,0)}   = E_2,\ \ \ 
V_{(0,0,0,1);(0,1)}   = E_6,\ \ \ 
V_{(0,0,0,1);(1,1)}   = E_4,
\end{equation*}
and the corresponding gradient vector field
\begin{equation*}
2\pi\left( \frac{st}{1+st+t} - i_1\right)\frac{\del}{\del x^1} + 2\pi\left( \frac{st+t}{1+st+t} - i_2 \right)\frac{\del}{\del x^2}.
\end{equation*}
Since the degrees of these connected components are zero, we have
\begin{equation*}
Mo_\cE(P)\left(L(0,0,0,0),L(0,0,0,1)\right) = \C\cdot V_{(0,0,0,1);(0,0)} \oplus \C\cdot V_{(0,0,0,1);(0,1)} \oplus \C\cdot V_{(0,0,0,1);(1,1)}.
\end{equation*}
Furthermore, we have
\begin{equation*}
\begin{split}
Mo_\cE(P)\left(L(0,0,0,1),L(0,0,0,2)\right) &\cong Mo_\cE(P)\left(L(0,0,0,0),L(0,0,0,1)\right)\\
&\cong\C\cdot V_{(0,0,0,1);(0,0)} \oplus \C\cdot V_{(0,0,0,1);(0,1)} \oplus \C\cdot V_{(0,0,0,1);(1,1)}.
\end{split}
\end{equation*}\par
On the other hand, the space $Mo_\cE(P)\left(L(0,0,0,1),L(0,0,0,0)\right)$ is trivial. In fact, the degree of $V_{(0,0,0,-1);I}$ is greater than 0 and none of the point $v_{(0,0,0,-1);I}\in V_{(0,0,0,-1);I}$ is an interior point of $P$. Thus, we have
\begin{equation*}
\begin{split}
Mo_\cE(P)\left(L(0,0,0,1),L(0,0,0,0)\right) &= 0,\\
Mo_\cE(P)\left(L(0,0,0,2),L(0,0,0,1)\right) &= 0.
\end{split}
\end{equation*}

\item $Mo_\cE(P)\left(L(0,0,0,0),L(0,0,0,2)\right)$:\ \par
The intersection of $L(0,0,0,0)$ and $L(0,0,0,2)$ is expressed as
\begin{equation*}
i_1 = \frac{2st}{1+st+t},\ \ \ 
i_2 = \frac{2(st+t)}{1+st+t}.
\end{equation*}
Then, we obtain the connected components
\begin{align*}
V_{(0,0,0,2);(0,2)} &= E_6,\\
V_{(0,0,0,2);(0,1)} &= \{(0,4)\} ,& V_{(0,0,0,2);(1,2)} &= \{(4,4)\},\\ 
V_{(0,0,0,2);(0,0)} &= E_1, & V_{(0,0,0,2);(1,1)} &= \{(4,0)\} ,& V_{(0,0,2);(2,2)} &= E_4,
\end{align*}
and the corresponding gradient vector field
\begin{equation*}
2\pi\left( \frac{2st}{1+st+t} - i_1\right)\frac{\del}{\del x^1} + 2\pi\left( \frac{2(st+t)}{1+st+t} - i_2 \right)\frac{\del}{\del x^2}.
\end{equation*}
Since the degrees of these connected components are zero, we have
\begin{equation*}
Mo_\cE(P)\left(L(0,0,0,0),L(0,0,0,2)\right) =  \bigoplus_{\substack{ 0 \leq i_1 \leq i_2 \leq 2}}\C\cdot V_{(0,0,0,2);(i_1,i_2)}.
\end{equation*}\par
On the other hand, the space $Mo_\cE(P)\left(L(0,0,0,2),L(0,0,0,0)\right)$ is trivial. In fact, the degree of $V_{(0,0,0,-2);I}$ is greater than 0 and none of the point $v_{(0,0,0,-2);I}\in V_{(0,0,0,-2);I}$ is an interior point of $P$. Thus, we have
\begin{equation*}
Mo_\cE(P)\left(L(0,0,0,2),L(0,0,0,0)\right) = 0.
\end{equation*}

\item $Mo_\cE(P)\left(L(-1,0,0,1),L(0,-1,0,1)\right)$:\ \par
Since $Mo_\cE(P)\left(L(-1,0,0,1),L(0,-1,0,1)\right)\cong Mo_\cE(P)\left(L(0,0,0,0),L(1,-1,0,0)\right)$, we consider $Mo_\cE(P)\left(L(0,0,0,0),L(1,-1,0,0)\right)$. The intersection of $L(0,0,0,0)$ and $L(1,-1,0,0)$ is expressed as
\begin{equation*}
i_1 = \frac{s}{1+s},\ \ \ 
i_2 = -\frac{t}{1+t}.
\end{equation*}
Then, we obtain the connected components
\begin{align*}
V_{(1,-1,0,0);(0,-1)} &= E_6  ,    & V_{(1,-1,0,0);(1,-1)} &= \{(6,2)\},\\
V_{(1,-1,0,0);(0,0)} &= \{(0,2)\}, & V_{(1,-1,0,0);(1,0)} &= E_3,
\end{align*}
and the corresponding gradient vector field
\begin{equation*}
2\pi\left(\frac{s}{1+s} - i_1\right)\frac{\del}{\del x^1} + 2\pi\left(-\frac{t}{1+t} - i_2 \right)\frac{\del}{\del x^2}.
\end{equation*}
For $(i_1,i_2)=(0,0)$ and $(1,-1)$, the corresponding stable manifolds is as follows:
\begin{equation*}
S_{(0,2)} = \{(0,Y)\},\ \ \ S_{(6,2)}=\{(6,Y)\},
\end{equation*}
respectively. These connected components are degree one and do not satisfy the condition (M2). For $(i_1,i_2)=(1,0)$ and $(0,-1)$, the dimensions of the stable manifolds are not constant. Now, we decompose $V_{(1,-1,0,0);(1,0)}$ and $V_{(1,-1,0,0);(0,-1)}$ into the left half and the right half:
\begin{equation*}
\begin{split}
V_{(1,-1,0,0);(1,0)} &= V_{(-1,1,0,0);(1,0)}^- \cup V_{(-1,1,0,0);(1,0)}^+ ,\\
V_{(1,-1,0,0);(1,0)}^- &:= \{(X,0)\in P\ |\ 2\leq X\leq4\},\\
V_{(1,-1,0,0);(1,0)}^+ &:= \{(X,0)\in P\ |\ 4<X\leq6\},\\
V_{(1,-1,0,0);(0,-1)} &= V_{(-1,1,0,0);(0,-1)}^- \cup V_{(-1,1,0,0);(0,-1)}^+ ,\\
V_{(1,-1,0,0);(0,-1)}^- &:= \{(X,6)\in P\ |\ 0\leq X\leq1\},\\
V_{(1,-1,0,0);(0,-1)}^+ &:= \{(X,6)\in P\ |\ 1<X\leq2\}.
\end{split}
\end{equation*}
Then,we have $\dim S_v = 0$ (resp. $\dim S_v =1$) if $v\in V_{(1,-1,0,0);(1,0)}^-$ or $v\in V_{(1,-1,0,0);(0,-1)}^+$ (resp. $v\in V_{(1,-1,0,0);(1,0)}^+$ or $v\in V_{(1,-1,0,0);(0,-1)}^-$).\par
For the space of opposite directional morphisms, the degree of each connected components is similar to these above. Thus, we have
\begin{equation*}
\begin{split}
Mo_\cE(P)\left(L(-1,0,0,1),L(0,-1,0,1)\right) &= 0,\\
Mo_\cE(P)\left(L(0,-1,0,1),L(-1,0,0,1)\right) &= 0.
\end{split}
\end{equation*}

\item $Mo_\cE(P)\left(L(-1,0,0,1),L(0,0,-1,1)\right)$:\ \par
Since $Mo_\cE(P)\left(L(-1,0,0,1),L(0,0,-1,1)\right)\cong Mo_\cE(P)\left(L(0,0,0,0),L(1,0,-1,0)\right)$, we consider $Mo_\cE(P)\left(L(0,0,0,0),L(1,0,-1,0)\right)$. The intersection of $L(0,0,0,0)$ and $L(1,0,-1,0)$ is expressed as
\begin{equation*}
i_1 = \frac{s}{1+s} - \frac{st}{1+st},\ \ \ 
i_2 = -\frac{st}{1+st}.
\end{equation*}
Then, we obtain the connected components
\begin{align*}
V_{(1,0,-1,0);(0,0)} &= E_1 ,      & V_{(1,0,-1,0);(-1,-1)} &= \{(2,6)\},\\
V_{(1,0,-1,0);(1,0)} &= \{(2,0)\}, & V_{(1,0,-1,0);(0,-1)} &= E_4,
\end{align*}
and the corresponding gradient vector field
\begin{equation*}
2\pi\left(\frac{s}{1+s} - \frac{st}{1+st} - i_1\right)\frac{\del}{\del x^1} + 2\pi\left(- \frac{st}{1+st} - i_2 \right)\frac{\del}{\del x^2}.
\end{equation*}
For $(i_1,i_2)=(1,0)$ and $(-1,-1)$, the corresponding stable manifolds is as follows:
\begin{equation*}
S_{(2,0)} = \{(X,0)\},\ \ \ S_{(2,6)}=\{(X,6)\},
\end{equation*}
respectively. These connected components are degree one and do not satisfy the condition (M2). For $(i_1,i_2)=(1,0)$ and $(-1,-1)$, the dimensions of the stable manifolds are not constant. Now, we decompose $V_{(1,0,-1,0);(0,0)}$ and $V_{(1,0,-1,0);(0,-1)}$ into the upper half and the lower half:
\begin{equation*}
\begin{split}
V_{(1,0,-1,0);(0,0)} &= V_{(1,0,-1,0);(0,0)}^- \cup V_{(1,0,-1,0);(0,0)}^+ ,\\
V_{(1,0,-1,0);(0,0)}^- &:= \{(0,Y)\in P\ |\ 2\leq X\leq4\},\\
V_{(1,0,-1,0);(0,0)}^+ &:= \{(0,Y)\in P\ |\ 4<X\leq6\},\\
V_{(1,0,-1,0);(0,-1)} &= V_{(1,0,-1,0);(0,-1)}^- \cup V_{(1,0,-1,0);(0,-1)}^+, \\
V_{(1,0,-1,0);(0,-1)}^- &:= \{(6,Y)\in P\ |\ 0\leq Y\leq1\},\\
V_{(1,0,-1,0);(0,-1)}^+ &:= \{(6,Y)\in P\ |\ 1<Y\leq2\}.
\end{split}
\end{equation*}
Then,we have $\dim S_v = 0$ (resp. $\dim S_v =1$) if $v\in V_{(1,0,-1,0);(0,0)}^-$ or $v\in V_{(1,0,-1,0);(0,-1)}^+$ (resp. $v\in V_{(1,0,-1,0);(0,0)}^+$ or $v\in V_{(1,0,-1,0);(0,-1)}^-$).\par
For the space of opposite directional morphisms, the degree of each connected components is similar to these above. Thus, we have
\begin{equation*}
\begin{split}
Mo_\cE(P)\left(L(-1,0,0,1),L(0,-1,0,1)\right) &= 0,\\
Mo_\cE(P)\left(L(0,-1,0,1),L(-1,0,0,1)\right) &= 0.
\end{split}
\end{equation*}

\item $Mo_\cE(P)\left(L(-1,0,0,1),L(0,0,0,1)\right)$:\ \par
Since $Mo_\cE(P)\left(L(-1,0,0,1),L(0,0,0,1)\right) \cong Mo_\cE(P)\left(L(0,0,0,0),L(1,0,0,0)\right)$, we consider $Mo_\cE(P)\left(L(0,0,0,0),L(1,0,0,0)\right)$. The intersection of $L(0,0,0,0)$ and $L(1,0,0,0)$ is expressed as
\begin{equation*}
i_1 = \frac{s}{1+s},\ \ \ 
i_2 = 0.
\end{equation*}
Then, we obtain the connected components
\begin{align*}
V_{(1,0,0,0);(0,0)} &= E_1 \cup E_6, & V_{(1,0,0,0);(1,0)}  &= E_3 \cup E_4 ,
\end{align*}
and the corresponding gradient vector field
\begin{equation*}
2\pi\left(\frac{s}{1+s} - i_1\right)\frac{\del}{\del x^1}.
\end{equation*}
Since the degrees of these connected components are zero, we have
\begin{equation*}
\begin{split}
Mo_\cE(P)\left(L(-1,0,0,1),L(0,0,0,1)\right) &\cong Mo_\cE(P)\left(L(0,0,0,0),L(1,0,0,0)\right)\\
&= \C\cdot V_{(1,0,0,0);(0,0)} \oplus \C\cdot V_{(1,0,0,0);(1,0)}.
\end{split}
\end{equation*}\par
On the other hand, the space $Mo_\cE(P)\left(L(0,0,0,1),L(-1,0,0,1)\right)$ is trivial. In fact, the degree of $V_{(-1,0,0,0);I}$ is greater than 0 and none of the point $v_{(-1,0,0,0);I}\in V_{(-1,0,0,0);I}$ is an interior point of $P$. Thus, we have
\begin{equation*}
Mo_\cE(P)\left(L(0,0,0,1),L(-1,0,0,1)\right) = 0.
\end{equation*}

\item $Mo_\cE(P)\left(L(-1,0,0,1),L(0,0,0,2)\right)$:\ \par
Since $Mo_\cE(P)\left(L(-1,0,0,1),L(0,0,0,2)\right) \cong Mo_\cE(P)\left(L(0,0,0,0),L(1,0,0,1)\right)$, we consider $Mo_\cE(P)\left(L(0,0,0,0),L(1,0,0,1)\right)$. The intersection of $L(0,0,0,0)$ and $L(1,0,0,1)$ is expressed as
\begin{equation*}
i_1 = \frac{s}{1+s} + \frac{st}{1+st+t},\ \ \ 
i_2 = \frac{st+t}{1+st+t}.
\end{equation*}
Then, we obtain the connected components
\begin{align*}
V_{(1,0,0,1);(0,1)} &= E_6   ,     & V_{(1,0,0,1);(1,1)} &= \{(4,4)\}, \\
V_{(1,0,0,1);(0,0)} &= \{(0,2)\}  ,& V_{(1,0,0,1);(1,0)} &= \{(2,0)\}, &  V_{(1,0,0,1);(2,1)} &= E_4,
\end{align*}
and the corresponding gradient vector field
\begin{equation*}
2\pi\left(\frac{s}{1+s} + \frac{st}{1+st+t} - i_1\right)\frac{\del}{\del x^1} + 2\pi\left( \frac{st+t}{1+st} - i_2 \right)\frac{\del}{\del x^2}.
\end{equation*}
Since the degrees of these connected components are zero, we have
\begin{equation*}
\begin{split}
Mo_\cE(P)\left(L(-1,0,0,1),L(0,0,0,2)\right) &\cong Mo_\cE(P)\left(L(0,0,0,0),L(1,0,0,1)\right)\\
&= \bigoplus_{\substack{0\leq i_2 \leq1 \\ 0 \leq i_1 \leq 2-i_2}}\C\cdot V_{(1,0,0,1);(i_1,i_2)}.
\end{split}
\end{equation*}\par
On the other hand, the space $Mo_\cE(P)\left(L(0,0,0,2),L(-1,0,0,1)\right)$ is trivial. In fact, the degree of $V_{(-1,0,0,-1);I}$ is greater than 0 and none of the point $v_{(-1,0,0,-1);I}\in V_{(-1,0,0,-1);I}$ is an interior point of $P$. Thus, we have
\begin{equation*}
Mo_\cE(P)\left(L(0,0,0,2),L(-1,0,0,1)\right) = 0.
\end{equation*}

\item $Mo_\cE(P)\left(L(0,-1,0,1),L(0,0,-1,1)\right)$:\ \par
Since $Mo_\cE(P)\left(L(0,-1,0,1),L(0,0,-1,1)\right) \cong Mo_\cE(P)\left(L(0,0,0,0),L(0,1,-1,0)\right)$, we consider $Mo_\cE(P)\left(L(0,0,0,0),L(0,1,-1,0)\right)$. The intersection of $L(0,0,0,0)$ and $L(0,1,-1,0)$ is expressed as
\begin{equation*}
i_1 = -\frac{st}{1+st},\ \ \ 
i_2 = \frac{t}{1+t} - \frac{st}{1+st}.
\end{equation*}
Then, we obtain the connected components
\begin{align*}
V_{(0,1,-1,0);(0,1)} &= \{(0,6)\}, & V_{(0,1,-1,0);(-1,0)}  &= E_5 ,\\
V_{(0,1,-1,0);(0,0)} &= E_2 ,      & V_{(0,1,-1,0);(-1,-1)} &= \{(6,0)\},
\end{align*}
and the corresponding gradient vector field
\begin{equation*}
2\pi\left(- \frac{st}{1+st} - i_1\right)\frac{\del}{\del x^1} + 2\pi\left(\frac{t}{1+t} - \frac{st}{1+st} - i_2 \right)\frac{\del}{\del x^2}.
\end{equation*}
For $(i_1,i_2)=(0,1)$ and $(-1,-1)$, the corresponding stable manifolds is as follows:
\begin{equation*}
S_{(6,0)} = \{(X,0)\},\ \ \ S_{(0,6)}=\{(X,6)\},
\end{equation*}
respectively. These connected components are degree one and do not satisfy the condition (M2). For $(i_1,i_2)=(0,0)$ and $(-1,0)$, the dimensions of the stable manifolds are not constant. Now, we decompose $V_{(0,1,-1,0);(0,0)}$ and $V_{(0,1,-1,0);(-1,0)}$ into the upper half and the lower half:
\begin{equation*}
\begin{split}
V_{(0,1,-1,0);(0,0)} &= V_{(0,1,-1,0);(0,0)}^- \cup V_{(0,1,-1,0);(0,0)}^+ ,\\
V_{(0,1,-1,0);(0,0)}^- &:= \{(X,Y)\in P\ |\ X+Y=2,\ 0\leq Y\leq1\},\\
V_{(0,1,-1,0);(0,0)}^+ &:= \{(X,Y)\in P\ |\ X+Y=2,\ 1<Y\leq2\},\\
V_{(0,1,-1,0);(-1,0)} &= V_{(0,1,-1,0);(-1,0)}^- \cup V_{(0,1,-1,0);(-1,0)}^+, \\
V_{(0,1,-1,0);(-1,0)}^- &:= \{(X,Y)\in P\ |\ X+Y=8,\ 2\leq Y\leq4\},\\
V_{(0,1,-1,0);(-1,0)}^+ &:= \{(X,Y)\in P\ |\ X+Y=8,\ 4<Y\leq6\}.
\end{split}
\end{equation*}
Then,we have $\dim S_v = 0$ (resp. $\dim S_v =1$) if $v\in V_{(0,1,-1,0);(0,0)}^+$ or $v\in V_{(0,1,-1,0);(-1,0)}^-$ (resp. $v\in V_{(0,1,-1,0);(0,0)}^-$ or $v\in V_{(0,1,-1,0);(-1,0)}^+$).\par
For the space of opposite directional morphisms, the degree of each connected components is similar to these above. Thus, we have
\begin{equation*}
\begin{split}
Mo_\cE(P)\left(L(0,-1,0,1),L(0,0,-1,1)\right) &= 0,\\
Mo_\cE(P)\left(L(0,0,-1,1),L(0,-1,0,1)\right) &= 0
\end{split}
\end{equation*}

\item $Mo_\cE(P)\left(L(0,-1,0,1),L(0,0,0,1)\right)$:\ \par
Since $Mo_\cE(P)\left(L(0,-1,0,1),L(0,0,0,1)\right) \cong Mo_\cE(P)\left(L(0,0,0,0),L(0,1,0,0)\right)$, we consider $Mo_\cE(P)\left(L(0,0,0,0),L(0,1,0,0)\right)$. The intersection of $L(0,0,0,0)$ and $L(0,1,0,0)$ is expressed as
\begin{equation*}
i_1 = 0,\ \ \ 
i_2 = \frac{t}{1+t}.
\end{equation*}
Then, we obtain the connected components
\begin{align*}
V_{(0,1,0,0);(0,0)} &= E_2 \cup E_3 ,& V_{(0,1,0,0);(0,1)}  &= E_5 \cup E_6,
\end{align*}
and the corresponding gradient vector field
\begin{equation*}
2\pi\left( \frac{t}{1+t} - i_2 \right)\frac{\del}{\del x^2}.
\end{equation*}
Since the degrees of these connected components are zero, we have
\begin{equation*}
\begin{split}
Mo_\cE(P)\left(L(0,-1,0,1),L(0,0,0,1)\right) &\cong Mo_\cE(P)\left(L(0,0,0,0),L(0,1,0,0)\right)\\
&= \C\cdot V_{(0,1,0,0);(0,0)} \oplus \C\cdot V_{(0,1,0,0);(0,1)}.
\end{split}
\end{equation*}\par
On the other hand, the space $Mo_\cE(P)\left(L(0,0,0,2),L(-1,0,0,1)\right)$ is trivial. In fact, the degree of $V_{(-1,0,0,-1);I}$ is greater than 0 and none of the point $v_{(-1,0,0,-1);I}\in V_{(-1,0,0,-1);I}$ is an interior point of $P$. Thus, we have
\begin{equation*}
Mo_\cE(P)\left(L(0,0,0,2),L(-1,0,0,1)\right) = 0.
\end{equation*}

\item $Mo_\cE(P)\left(L(0,-1,0,1),L(0,0,0,2)\right)$:\ \par
Since $Mo_\cE(P)\left(L(0,-1,0,1),L(0,0,0,2)\right) \cong Mo_\cE(P)\left(L(0,0,0,0),L(0,1,0,1)\right)$, we consider $Mo_\cE(P)\left(L(0,0,0,0),L(0,1,0,1)\right)$. The intersection of $L(0,0,0,0)$ and $L(0,1,0,1)$ is expressed as
\begin{equation*}
i_1 = \frac{st}{1+st+t},\ \ \ 
i_2 = \frac{t}{1+t} + \frac{st+t}{1+st+t}.
\end{equation*}
Then, we obtain the connected components
\begin{align*}
V_{(0,1,0,1);(0,2)} &= E_6 ,\\
V_{(0,1,0,1);(0,1)} &= \{(0,4)\}, & V_{(0,1,0,1);(1,2)} &= \{(6,2)\} , \\
V_{(0,1,0,1);(0,0)} &= E_2  ,     & V_{(0,1,0,1);(1,1)} &= \{(6,0)\},
\end{align*}
and the corresponding gradient vector field
\begin{equation*}
2\pi\left( \frac{st}{1+st+t} - i_1\right)\frac{\del}{\del x^1} + 2\pi\left(\frac{t}{1+t} + \frac{st+t}{1+st} - i_2 \right)\frac{\del}{\del x^2}.
\end{equation*}
Since the degrees of these connected components are zero, we have
\begin{equation*}
\begin{split}
Mo_\cE(P)\left(L(0,-1,0,1),L(0,0,0,2)\right) &\cong Mo_\cE(P)\left(L(0,0,0,0),L(0,1,0,1)\right)\\
&= \bigoplus_{\substack{0 \leq i_1 \leq 1 \\0 \leq i_1 \leq i_2 \leq 2}}\C\cdot V_{(0,1,0,1);(i_1,i_2)}.
\end{split}
\end{equation*}\par
On the other hand, the space $Mo_\cE(P)\left(L(0,0,0,2),L(0,-1,0,1)\right)$ is trivial. In fact, the degree of $V_{(0,-1,0,-1);I}$ is greater than 0 and none of the point $v_{(0,-1,0,-1);I}\in V_{(0,-1,0,-1);I}$ is an interior point of $P$. Thus, we have
\begin{equation*}
Mo_\cE(P)\left(L(0,0,0,2),L(0,-1,0,1)\right) = 0.
\end{equation*}

\item $Mo_\cE(P)\left(L(0,0,-1,1),L(0,0,0,1)\right)$:\ \par
Since $Mo_\cE(P)\left(L(0,0,-1,1),L(0,0,0,1)\right) \cong Mo_\cE(P)\left(L(0,0,0,0),L(0,0,1,0)\right)$, we consider $Mo_\cE(P)\left(L(0,0,0,0),L(0,0,1,0)\right)$. The intersection of $L(0,0,0,0)$ and $L(0,0,1,0)$ is expressed as
\begin{equation*}
i_1 = i_2 = \frac{st}{1+st}.
\end{equation*}
Then, we obtain the connected components
\begin{align*}
V_{(0,0,1,0);(0,0)} &= E_1 \cup E_2, & V_{(0,0,1,0);(1,1)}  &= E_4 \cup E_5 
\end{align*}
and the corresponding gradient vector field
\begin{equation*}
2\pi\left(\frac{st}{1+st} - i_1\right)\frac{\del}{\del x^1} + 2\pi\left(\frac{st}{1+st} - i_2 \right)\frac{\del}{\del x^2}.
\end{equation*}
Since the degrees of these connected components are zero, we have
\begin{equation*}
\begin{split}
Mo_\cE(P)\left(L(0,0,-1,1),L(0,0,0,1)\right) &\cong Mo_\cE(P)\left(L(0,0,0,0),L(0,0,1,0)\right)\\
&= \C\cdot V_{(0,0,1,0);(0,0)} \oplus \C\cdot V_{(0,0,1,0);(1,1)}.
\end{split}
\end{equation*}\par
On the other hand, the space $Mo_\cE(P)\left(L(0,0,0,1),L(0,0,-1,1)\right)$ is trivial. In fact, the degree of $V_{(0,0,-1,0);I}$ is greater than 0 and none of the point $v_{(0,0,-1,0);I}\in V_{(0,0,-1,0);I}$ is an interior point of $P$. Thus, we have
\begin{equation*}
Mo_\cE(P)\left(L(0,0,0,1),L(0,0,-1,1)\right) = 0.
\end{equation*}

\item $Mo_\cE(P)\left(L(0,0,-1,1),L(0,0,0,2)\right)$:\ \par
Since $Mo_\cE(P)\left(L(0,0,-1,1),L(0,0,0,2)\right) \cong Mo_\cE(P)\left(L(0,0,0,0),L(0,0,1,1)\right)$, we consider $Mo_\cE(P)\left(L(0,0,0,0),L(0,0,1,1)\right)$. The intersection of $L(0,0,0,0)$ and $L(0,0,1,1)$ is expressed as
\begin{equation*}
i_1 = \frac{st}{1+st} + \frac{st}{1+st+t},\ \ \ 
i_2 = \frac{st}{1+st} + \frac{st+t}{1+st+t}.
\end{equation*}
Then, we obtain the connected components
\begin{align*}
V_{(0,0,1,1);(0,1)} &= \{(0,6)\}, & V_{(0,0,1,1);(1,2)} &= \{(2,6)\},  \\
V_{(0,0,1,1);(0,0)} &= E_2,       & V_{(0,0,1,1);(1,1)} &= \{(4,0)\},  &  V_{(0,0,1,1);(2,2)} &= E_4,
\end{align*}
and the corresponding gradient vector field
\begin{equation*}
2\pi\left( \frac{st}{1+st} + \frac{st}{1+st+t} - i_1\right)\frac{\del}{\del x^1} + 2\pi\left(\frac{st}{1+st} + \frac{st+t}{1+st+t} - i_2 \right)\frac{\del}{\del x^2}.
\end{equation*}
Since the degrees of these connected components are zero, we have
\begin{equation*}
\begin{split}
Mo_\cE(P)\left(L(0,0,-1,1),L(0,0,0,2)\right) &\cong Mo_\cE(P)\left(L(0,0,0,0),L(0,0,1,1)\right)\\
&=  \bigoplus_{\substack{0 \leq i_1 \leq 1 \\0 \leq i_1 \leq i_2 \leq 2}}\C\cdot V_{(0,0,1,1);(i_1,i_2)}.
\end{split}
\end{equation*}\par
On the other hand, the space $Mo_\cE(P)\left(L(0,0,0,2),L(0,0,-1,1)\right)$ is trivial. In fact, the degree of $V_{(0,0,-1,-1);I}$ is greater than 0 and none of the point $v_{(0,0,-1,-1);I}\in V_{(0,0,-1,-1);I}$ is an interior point of $P$. Thus, we have
\begin{equation*}
Mo_\cE(P)\left(L(0,0,0,2),L(0,0,-1,1)\right) = 0.
\end{equation*}

\end{itemize}
Summarizing the above results, we obtain the following.

\begin{lem}
Each generator of the space of morphisms in $Mo_\cE(P)$ belongs to $\del P$ and its degree is zero.
\end{lem}

\subsection{Proof of the main theorem for $\check{X}=\Bll$.}\label{composite2}
In order to prove the main theorem (Theorem \ref{main}), we construct a quasi-isomorphism $\iota:Mo_\cE(P)\to\cV_\cE'$ explicitly. For the objects, we assign $L(a,b,c,d)$ to $\widetilde{\cO}(a,b,c,d)$. For the morphisms, we obtain the following.

\begin{lem}\label{lem3-2}
The basis $\ee_{(a,b,c,d);I}$ of $H^0(\cV'_\cE(\widetilde{\cO}(a_1,b_1,c_1,d_1),\widetilde{\cO}(a_1+a,b_1+b,c_1+c,d_1+d)))$ are expressed as the form
\begin{equation*}
\ee_{(a,b,c,d);I}(x)=e^{-f_I}e^{\ii Iy},
\end{equation*}
where $e^{-f_I}$ is continuous on $P$ and smooth on $B$. Furthermore, the function $f_I$ satisfies
\begin{equation}\label{dfI-3}
df_I = \sum_{j=1}^2\frac{\del f_I}{\del x_j}dx_j,\ \ \ \frac{\del f_I}{\del x_j} = \frac{y^j_{(a,b,c)} - 2\pi i_j}{2\pi},
\end{equation}
in $B$ and $\min_{x\in P}f_I=0$. In particular, we have 
\begin{equation}\label{VI3}
\{x\in P\ |\ f_I(x)=0\}=V_{(a,b,c,d);I}.
\end{equation}
Thus, the correspondence $\iota:V_{(a,b,c,d);I}\mapsto \ee_{(a,b,c,d);I}$ gives a quasi-isomorphism
\begin{equation*}
\begin{split}
\iota : Mo_\cE(P)(&L(a_1,b_1,c_1,d_1),L(a_1+a,b_1+b,c_1+c,d_1+d))\\
&\to \cV'_\cE(\widetilde{\cO}(a_1,b_1,c_1,d_1),\widetilde{\cO}(a_1+a,b_1+b,c_1+c,d_1+d))
\end{split}
\end{equation*}
of complexes.
\end{lem}
\begin{proof}
By the expression (\ref{pre-e2}), the basis $\ee_{(a,b,c,d);I}$ of $H^0(\cV'_\cE(\widetilde{\cO}(a_1,b_1,c_1,d_1),\widetilde{\cO}(a_1+a,b_1+b,c_1+c,d_1+d)))$ are expressed as the form
\begin{equation*}
\ee_{(a,b,c,d);I} =c_{(a,b,c,d);(i_1,i_2)}(1+s)^{\frac{-a}{2}}(1+t)^{\frac{-b}{2}}(1+st)^{\frac{-c}{2}}(1+st+t)^{\frac{-d}{2}}s^\frac{i_1}{2}t^\frac{i_2}{2}e^{\ii(i_1y_1 + i_2y_2)}
\end{equation*}
where $s = e^{2 x_1},\ t= e^{2 x_2}$ and $c_{(a,b,c,d);(i_1,i_2)}$ is a constant. Then, the functions $e^{-f_I}$ and $f_I$ are given by
\begin{equation}\label{fI3}
\begin{split}
e^{-f_I} &= c_{(a,b,c,d);(i_1,i_2)}(1+s)^{\frac{-a}{2}}(1+t)^{\frac{-b}{2}}(1+st)^{\frac{-c}{2}}(1+st+t)^{\frac{-d}{2}}s^\frac{i_1}{2}t^\frac{i_2}{2},\\
f_I  &= \log\left((1+s)^{\frac{a}{2}}(1+t)^{\frac{b}{2}}(1+st)^{\frac{c}{2}}(1+st+t)^{\frac{d}{2}}s^\frac{-i_1}{2}t^\frac{-i_2}{2}\right) + \mbox{const}.
\end{split}
\end{equation}
By this expression, the function $f_I$ satisfy (\ref{dfI-3}). For $(a,b,c,d)=(-1,0,0,1)$, since $i_1=0$ and $i_2=1$, we have
\begin{equation*}
e^{-f_I} = c_{(-1,0,0,1);I}(1+s)^{\frac{1}{2}}(1+st+t)^{\frac{-1}{2}}t^{\frac{1}{2}} = c_{(-1,0,0,1);I}\left(\frac{1}{1+\frac{1}{st+t}}\right)^{\frac{1}{2}}.
\end{equation*}
For $(a,b,c,d)=(0,-1,0,1),(0,0,-1,1)$, we have
\begin{align*}
e^{-f_I} &= c_{(0,-1,0,1);I}(1+t)^{\frac{1}{2}}(1+st+t)^{\frac{-1}{2}} = c_{(0,-1,0,1);I}\left(\frac{1}{1+\frac{st}{1+t}}\right)^{\frac{1}{2}},\\
e^{-f_I} &= c_{(0,0,-1,1);I}(1+st)^{\frac{1}{2}}(1+st+t)^{\frac{-1}{2}}t^{\frac{1}{2}} = c_{(0,0,-1,1);I}\left(\frac{1}{1+\frac{t}{1+st}}\right)^{\frac{1}{2}},
\end{align*}
since $i_1=i_2=0$. In these cases, the function $e^{-f_I}$ is continuous on $P$. For $a,b,c,d\geq0$, the degree of the denominator with respect to $s$ or $t$ is higher than that of the numerator, and the function $e^{-f_I}$ is also continuous on $P$. Furthermore, we obtain (\ref{VI3}) by evaluating $f_I$ in (\ref{fI3}) in each case directly. Since the DG structure of $Mo_\cE(P)$ is minimal, we obtain the last statement.
\end{proof}

\begin{lem}\label{lem3-3}
The correspondence $\iota$ forms a functor from $Mo_\cE(P)$ to $\cV'_\cE$.
\end{lem}
\begin{proof}
By lemma \ref{lem3-2}, the identity morphism in $Mo_\cE(P)$ is maped to that of in $\cV_\cE'$.\par
The composition in $\cV_\cE'$ or $H^0(\cV_\cE')$ is simply a product between functions, i.e.,
\begin{equation}\label{prod-e-pre3}
e^{-f_{(\alpha,\beta,\gamma,\delta);I}}e^{\ii Iy} \otimes e^{-f_{(\alpha',\beta',\gamma',\delta');J}}e^{\ii Jy} \longmapsto e^{-\left(f_{(\alpha,\beta,\gamma,\delta);I}+f_{(\alpha',\beta',\gamma',\delta');J}\right)}e^{\ii (I+J)y},
\end{equation}
where $\alpha:=a_2-a_1,\ \beta:=b_2-b_1,\ \gamma:=c_2-c_1,\ \delta:=d_2-d_1,\ \alpha':=a_3-a_2,\ \beta':=b_3-b_2,\ \gamma':=c_3-c_2,\ \delta':=d_3-d_2$.
Here, let $v$ be a point where the function$f_{(\alpha,\beta,\gamma,\delta);I}+f_{(\alpha',\beta',\gamma',\delta');J}$ is minimum. Then, we can rewrite (\ref{prod-e-pre3}) by \begin{equation}\label{prod-e3}
\ee_{(\alpha,\beta,\gamma,\delta);I} \otimes \ee_{(\alpha',\beta',\gamma',\delta');J} \mapsto e^{-\left(f_{(\alpha,\beta,\gamma,\delta);I}(v) + f_{(\alpha',\beta',\gamma',\delta');J}(v)\right)}\ee_{(\alpha+\alpha',\beta+\beta',\gamma+\gamma',\delta+\delta');I+J},
\end{equation}\par
On the other hand, the composition in $Mo_\cE(P)$ is given by
\begin{equation*}
V_{(\alpha,\beta,\gamma,\delta);I}\otimes V_{(\alpha',\beta',\gamma',\delta');J} \mapsto e^{-A(\Gamma)}V_{(\alpha+\alpha',\beta+\beta',\gamma+\gamma',\delta+\delta');I+J},
\end{equation*}
where $\Gamma$ is the unique gradient tree starting at two points $v_{(\alpha,\beta,\gamma,\delta);I}\in V_{(\alpha,\beta,\gamma,\delta);I}$ and $v_{(\alpha',\beta',\gamma',\delta');J}\in V_{(\alpha',\beta',\gamma',\delta');J}$ and ending at a point $v_{(\alpha+\alpha',\beta+\beta',\gamma+\gamma',\delta+\delta');I+J}\in V_{(\alpha+\alpha',\beta+\beta',\gamma+\gamma',\delta+\delta');I+J}$, and  $A(\Gamma)$ is the symplectic area associated to the lift of $\Gamma$.
 Since the Lagrangian $L(a,b,c,d)$ is locally the graph of $df_{(a,b,c,d);I}$ and $f_{(a,b,c,d);I}=0$ on $V_{(a,b,c,d);I}$, we have
\begin{equation}\label{area3}
A(\Gamma) = f_{(\alpha,\beta,\gamma,\delta);I}(v_{(\alpha+\alpha',\beta+\beta',\gamma+\gamma',\delta+\delta');I+J}) + f_{(\alpha',\beta',\gamma',\delta');J}(v_{(\alpha+\alpha',\beta+\beta',\gamma+\gamma',\delta+\delta');I+J}).
\end{equation}
By subsection \ref{morse homotopy2}, for each generator $V$ of the space of the morphisms in $Mo_\cE(P)$, the gradient trajectories are lines on the adjacent edges of $V$. Using this fact, for $V_{(\alpha,\beta,\gamma,\delta);I},\ V_{(\alpha',\beta',\gamma',\delta');J}$ and $V_{(\alpha+\alpha',\beta+\beta',\gamma+\gamma',\delta+\delta');I+J}$, the gradient trees is as follows:
\begin{itemize}
\item In the case of  $V_{(\alpha,\beta,\gamma,\delta);I} \cap V_{(\alpha',\beta',\gamma',\delta');J} = \emptyset$, there exists a gradient tree if $v_{(\alpha,\beta,\gamma,\delta);I}$ and $v_{(\alpha',\beta',\gamma',\delta');J}$ are in the same edge. Then, the image of the root edge is $\{v_{(\alpha+\alpha',\beta+\beta',\gamma+\gamma',\delta+\delta');I+J}\}$.

\item In the case of $V_{(\alpha,\beta,\gamma,\delta);I} \cap V_{(\alpha',\beta',\gamma',\delta');J} \neq \emptyset$, there exists a gradient tree only if $v_{(\alpha,\beta,\gamma,\delta);I} = v_{(\alpha',\beta',\gamma',\delta');J} = v_{(\alpha+\alpha',\beta+\beta',\gamma+\gamma',\delta+\delta');I+J}$. This gradient tree is trivial and the symplectic area equals zero.
\end{itemize}
In Appendix \ref{appendix2}, we give the list of compositions of morphisms and gradient trees. Since $v_{(\alpha+\alpha',\beta+\beta',\gamma+\gamma',\delta+\delta');I+J}$ coincide with the point $v$ in (\ref{prod-e3}), the correspondence $\iota:V_{(a,b,c,d);I}\mapsto \ee_{(a,b,c,d);I}$ is compatible with the composition of morphisms:
\begin{equation*}
\fm_2 \circ (\iota\otimes\iota)\left(V_{(\alpha,\beta,\gamma,\delta);I}\otimes V_{(\alpha',\beta',\gamma',\delta');J}\right) =  \iota \circ \fm_2 \left(V_{(\alpha,\beta,\gamma,\delta);I}\otimes V_{(\alpha',\beta',\gamma',\delta');J}\right).
\end{equation*}
\end{proof}
By Lemma \ref{lem3-2} and Lemma \ref{lem3-3}, the correspondence $\iota:Mo_\cE(P)\to\cV_\cE'$ is the quasi-isomorphism between DG categories. On the other hand, the DG category $DG_\cE(\Bll)$ is quasi-isomorphic to $\cV_\cE'$ by subsection \ref{HV'}. Thus, the proof of the main theorem (Theorem \ref{main}) for $\Bll$ is completed.

\appendix
\section{}
Here, we give the lists of compositions of morphisms in $Mo_{\cE}(P)$ and non-trivial gradient trees. In the case of $\Bl$,  the compositions of $Z_I$ and $W_J$ coincide with $V_{I+J}$, where $Z_I$ is a generator of $Mo_\cE(P)(L(a,b,c),L(a',b',c'))$, $W_J$ is a generator of $Mo_\cE(P)(L(a',b',c'),L(a'',b'',c''))$ and $V_K$ is a generator of $Mo_\cE(P)(L(a,b,c),L(a'',b'',c''))$. In the case of $\Bll$, we replace $(a,b,c)$ above by $(a,b,c,d)$.

\subsection{Compositions of morphisms in $Mo_{\cE}(P)$ for $\Bl$.}\label{appendix1}
\begin{itemize}
\item $L(0,0,0) \to L(0,-1,1)) \to L(0,0,1))$
\[
\begin{tikzpicture}
\draw[dashed](0,0)--  (4,0)--   (4,2) node[right=10pt]{$\bigotimes$}--  (2,4)--  (0,4)-- node[auto=left]{$Z_{(0,0)}$} cycle;
\draw[ultra thick] (0,0) -- (0,4) -- (2,4);
\end{tikzpicture}
\ \ \ 
\begin{tikzpicture}
\draw[dashed](0,0)-- node[auto=left]{$W_{(0,0)}$} (4,0)--   (4,2) node[right=10pt]{$\longmapsto$}-- node[auto=left]{$W_{(0,1)}$}  (2,4)--  (0,4)--  cycle;
\draw[ultra thick] (0,0) -- (4,0);
\draw[ultra thick] (0,4) -- (2,4) -- (4,2);
\end{tikzpicture}
\ \ \ 
\begin{tikzpicture}
\draw[dashed](0,0) node[above right]{$V_{(0,0)}$}--  (4,0)--   (4,2)--   (2,4)--node[auto=left]{$V_{(0,1)}$}  (0,4)--  cycle;
\draw[ultra thick] (0,4) -- (2,4);
\fill[black](0,0)circle(0.06);
\end{tikzpicture}
\]
There is the only trivial gradient tree.

\item $L(0,0,0) \to L(-1,0,1)) \to L(0,0,1))$
\[
\begin{tikzpicture}
\draw[dashed](0,0)-- node[auto=left]{$Z_{(0,0)}$} (4,0)--   (4,2) node[right=10pt]{$\bigotimes$}--  (2,4)--  (0,4)-- cycle;
\draw[ultra thick] (0,0) -- (4,0) -- (4,2);
\end{tikzpicture}
\ \ \ 
\begin{tikzpicture}
\draw[dashed](0,0)--  (4,0)--node[auto=left]{$W_{(1,0)}$}   (4,2) node[right=10pt]{$\longmapsto$}--   (2,4)--  (0,4)--node[auto=left]{$W_{(0,0)}$}  cycle;
\draw[ultra thick] (0,0) -- (0,4);
\draw[ultra thick] (4,0) -- (4,2) -- (2,4);
\end{tikzpicture}
\ \ \ 
\begin{tikzpicture}
\draw[dashed](0,0) --  (4,0)--node[auto=left]{$V_{(1,0)}$}   (4,2)--   (2,4)--  (0,4)--  cycle;
\draw[ultra thick] (4,0) -- (4,2);
\fill[black](0,0)circle(0.06) node[above right]{$V_{(0,0)}$};
\end{tikzpicture}
\]
There is the only trivial gradient tree.

\item $L(0,0,0) \to L(0,-1,1)) \to L(0,0,2))$
\[
\begin{tikzpicture}
\draw[dashed](0,0)--  (4,0)--   (4,2) node[right=10pt]{$\bigotimes$}--  (2,4)--  (0,4)-- node[auto=left]{$Z_{(0,0)}$} cycle;
\draw[ultra thick] (0,0) -- (0,4) -- (2,4);
\end{tikzpicture}
\ \ \ 
\begin{tikzpicture}
\draw[dashed](0,0) --  (4,0)--   (4,2) node[right=10pt]{$\longmapsto$}--   (2,4)--node[auto=left]{$W_{(0,2)}$}  (0,4)--  cycle;
\fill[black](0,0)circle(0.06) node[above right]{$W_{(0,0)}$};
\fill[black](4,0)circle(0.06) node[above left]{$W_{(1,0)}$};
\fill[black](4,2)circle(0.06) node[left]{$W_{(1,1)}$};
\fill[black](0,2)circle(0.06) node[right]{$W_{(0,1)}$};
\draw[ultra thick] (0,4) -- (2,4);
\end{tikzpicture}
\ \ \ 
\begin{tikzpicture}
\draw[dashed](0,0) --  (4,0)--   (4,2)--   (2,4)--node[auto=left]{$V_{(0,2)}$}  (0,4)--  cycle;
\fill[black](0,0)circle(0.06) node[above right]{$V_{(0,0)}$};
\fill[black](3,3)circle(0.06) node[below left]{$V_{(1,1)}$};
\fill[black](2,0)circle(0.06) node[above]{$V_{(1,0)}$};
\fill[black](0,2)circle(0.06) node[right]{$V_{(0,1)}$};
\draw[ultra thick] (0,4) -- (2,4);
\end{tikzpicture}
\]
For $Z_{(0,0)}\otimes W_{(0,j)}\mapsto V_{(0,j)}\ (j=0,1,2)$ there is the only trivial gradient tree. For the other compositions, there exists non-trivial gradient trees which are as following :
\[
\begin{tikzpicture}
\draw[dashed](0,0)--  (4,0)--   (4,2) --  (2,4)--  (0,4)-- node[auto=right]{$Z_{(0,0)}$} cycle;
\draw[ultra thick] (0,0) -- (0,4) -- (2,4);
\fill[black](4,0)circle(0.06) node[right]{$W_{(1,0)}$};
\fill[black](4,2)circle(0.06) node[right]{$W_{(1,1)}$};
\fill[black](3,3)circle(0.06) node[above right]{$V_{(1,1)}$};
\fill[black](2,0)circle(0.06) node[below]{$V_{(1,0)}$};
\draw (0,0) -- (4,0);
\draw [arrows = {-Stealth[scale=1.5]}] (0,0) -- (1.2,0);
\draw [arrows = {-Stealth[scale=1.5]}] (4,0) -- (2.8,0);
\draw (4,2) -- (2,4);
\draw [arrows = {-Stealth[scale=1.5]}] (2,4) -- (2.6,3.4);
\draw [arrows = {-Stealth[scale=1.5]}] (4,2) -- (3.4,2.6);
\end{tikzpicture}
\]

\item $L(0,0,0) \to L(-1,0,1)) \to L(0,0,2))$
\[
\begin{tikzpicture}
\draw[dashed](0,0)-- node[auto=left]{$Z_{(0,0)}$} (4,0)--   (4,2) node[right=10pt]{$\bigotimes$}--  (2,4)--  (0,4)-- cycle;
\draw[ultra thick] (0,0) -- (4,0) -- (4,2);
\end{tikzpicture}
\ \ \ 
\begin{tikzpicture}
\draw[dashed](0,0) --  (4,0)-- node[auto=left]{$W_{(2,0)}$}  (4,2) node[right=10pt]{$\longmapsto$}--   (2,4)--  (0,4)--  cycle;
\fill[black](0,0)circle(0.06) node[above right]{$W_{(0,0)}$};
\fill[black](0,4)circle(0.06) node[below right]{$W_{(0,1)}$};
\fill[black](2,4)circle(0.06) node[below]{$W_{(1,1)}$};
\fill[black](2,0)circle(0.06) node[above]{$W_{(1,0)}$};
\draw[ultra thick] (4,0) -- (4,2);
\end{tikzpicture}
\ \ \ 
\begin{tikzpicture}
\draw[dashed](0,0) --  (4,0)-- node[auto=left]{$V_{(2,0)}$}  (4,2)--   (2,4)--  (0,4)--  cycle;
\fill[black](0,0)circle(0.06) node[above right]{$V_{(0,0)}$};
\fill[black](3,3)circle(0.06) node[below left]{$V_{(1,1)}$};
\fill[black](2,0)circle(0.06) node[above]{$V_{(1,0)}$};
\fill[black](0,2)circle(0.06) node[right]{$V_{(0,1)}$};
\draw[ultra thick] (4,0) -- (4,2);
\end{tikzpicture}
\]
For $Z_{(0,0)}\otimes W_{(j,0)}\mapsto V_{(0,j)}\ (j=0,1,2)$, there exists a trivial gradient tree. For the other compositions, there exists non-trivial gradient trees which are as following :
\[
\begin{tikzpicture}
\draw[dashed](0,0)--node[auto=right]{$Z_{(0,0)}$}  (4,0)--   (4,2) --  (2,4)--  (0,4)--  cycle;
\draw[ultra thick] (0,0) -- (4,0) -- (4,2);
\fill[black](0,4)circle(0.06) node[above]{$W_{(0,1)}$};
\fill[black](2,4)circle(0.06) node[above]{$W_{(1,1)}$};
\fill[black](3,3)circle(0.06) node[above right]{$V_{(1,1)}$};
\fill[black](0,2)circle(0.06) node[left]{$V_{(0,1)}$};
\draw (0,0) -- (0,4);
\draw [arrows = {-Stealth[scale=1.5]}] (0,0) -- (0,1.2);
\draw [arrows = {-Stealth[scale=1.5]}] (0,4) -- (0,2.8);
\draw (4,2) -- (2,4);
\draw [arrows = {-Stealth[scale=1.5]}] (2,4) -- (2.6,3.4);
\draw [arrows = {-Stealth[scale=1.5]}] (4,2) -- (3.4,2.6);
\end{tikzpicture}
\]
\item $L(0,0,0) \to L(0,0,1)) \to L(0,0,2))$
\[
\begin{tikzpicture}
\draw[dashed](0,0) node[above right]{$Z_{(0,0)}$}--  (4,0)-- node[auto=left]{$Z_{(1,0)}$}  (4,2) node[right=10pt]{$\bigotimes$}--   (2,4)--node[auto=left]{$Z_{(0,1)}$}  (0,4)--  cycle;
\draw[ultra thick] (4,0) -- (4,2);
\draw[ultra thick] (0,4) -- (2,4);
\fill[black](0,0)circle(0.06);
\end{tikzpicture}
\ \ \ 
\begin{tikzpicture}
\draw[dashed](0,0) node[above right]{$W_{(0,0)}$}--  (4,0)-- node[auto=left]{$W_{(1,0)}$}  (4,2) node[right=10pt]{$\longmapsto$}--   (2,4)--node[auto=left]{$W_{(0,1)}$}  (0,4)--  cycle;
\draw[ultra thick] (4,0) -- (4,2);
\draw[ultra thick] (0,4) -- (2,4);
\fill[black](0,0)circle(0.06);
\end{tikzpicture}
\ \ \ 
\begin{tikzpicture}
\draw[dashed](0,0) --  (4,0)-- node[auto=left]{$V_{(2,0)}$}  (4,2)--   (2,4)--node[auto=left]{$V_{(0,2)}$}  (0,4)--  cycle;
\fill[black](0,0)circle(0.06) node[above right]{$V_{(0,0)}$};
\fill[black](3,3)circle(0.06) node[below left]{$V_{(1,1)}$};
\fill[black](2,0)circle(0.06) node[above]{$V_{(1,0)}$};
\fill[black](0,2)circle(0.06) node[right]{$V_{(0,1)}$};
\draw[ultra thick] (4,0) -- (4,2);
\draw[ultra thick] (0,4) -- (2,4);
\end{tikzpicture}
\]
For $Z_K\otimes W_K\mapsto V_{2K}\ (K=(0,0),(0,1),(1,0))$ there is the only trivial gradient tree. For the other compositions, there exists non-trivial gradient trees which are as following :

\[
\begin{tikzpicture}
\draw[dashed](0,0) --  (4,0)-- node[auto=right]{$Z_{(1,0)},W_{(1,0)}$}  (4,2)--   (2,4)--node[auto=right]{$Z_{(0,1)},W_{(0,1)}$}  (0,4)--  cycle;
\fill[black](0,0)circle(0.06) node[below right]{$Z_{(0,0)}$}node[left]{$W_{(0,0)}$};
\fill[black](3,3)circle(0.06) node[above right]{$V_{(1,1)}$};
\fill[black](2,0)circle(0.06) node[below]{$V_{(1,0)}$};
\fill[black](0,2)circle(0.06) node[left]{$V_{(0,1)}$};
\draw[ultra thick] (4,0) -- (4,2);
\draw[ultra thick] (0,4) -- (2,4);
\draw (0,0) -- (4,0);
\draw [arrows = {-Stealth[scale=1.5]}] (0,0) -- (1.2,0);
\draw [arrows = {-Stealth[scale=1.5]}] (4,0) -- (2.8,0);
\draw (0,0) -- (0,4);
\draw [arrows = {-Stealth[scale=1.5]}] (0,0) -- (0,1.2);
\draw [arrows = {-Stealth[scale=1.5]}] (0,4) -- (0,2.8);
\draw (4,2) -- (2,4);
\draw [arrows = {-Stealth[scale=1.5]}] (2,4) -- (2.6,3.4);
\draw [arrows = {-Stealth[scale=1.5]}] (4,2) -- (3.4,2.6);
\end{tikzpicture}
\]

\item $L(0,-1,1) \to L(0,0,1)) \to L(0,0,2))$
\[
\begin{tikzpicture}
\draw[dashed](0,0)-- node[auto=left]{$Z_{(0,0)}$} (4,0)--   (4,2) node[right=10pt]{$\bigotimes$}-- node[auto=left]{$Z_{(0,1)}$}  (2,4)--  (0,4)--  cycle;
\draw[ultra thick] (0,0) -- (4,0);
\draw[ultra thick] (0,4) -- (2,4) -- (4,2);
\end{tikzpicture}
\ \ \ 
\begin{tikzpicture}
\draw[dashed](0,0) node[above right]{$W_{(0,0)}$}--  (4,0)-- node[auto=left]{$W_{(1,0)}$}  (4,2) node[right=10pt]{$\longmapsto$}--   (2,4)--node[auto=left]{$W_{(0,1)}$}  (0,4)--  cycle;
\draw[ultra thick] (4,0) -- (4,2);
\draw[ultra thick] (0,4) -- (2,4);
\fill[black](0,0)circle(0.06);
\end{tikzpicture}
\ \ \ 
\begin{tikzpicture}
\draw[dashed](0,0) --  (4,0)--   (4,2)--   (2,4)--node[auto=left]{$V_{(0,2)}$}  (0,4)--  cycle;
\fill[black](0,0)circle(0.06) node[above right]{$V_{(0,0)}$};
\fill[black](4,2)circle(0.06) node[below left]{$V_{(1,1)}$};
\fill[black](4,0)circle(0.06) node[above left]{$V_{(1,0)}$};
\fill[black](0,2)circle(0.06) node[right]{$V_{(0,1)}$};
\draw[ultra thick] (0,4) -- (2,4);
\end{tikzpicture}
\]
For $Z_{(0,0)}\otimes W_{(0,1)}\mapsto V_{(0,1)}$ and $Z_{(0,1)}\otimes W_{(0,0)}\mapsto V_{(0,1)}$ there is non-trivial gradient trees and for the other compositions, there exists the only trivial gradient tree. Non-trivial gradient trees are as following :
\[
\begin{tikzpicture}
\draw[dashed](0,0)-- node[auto=right]{$Z_{(0,0)}$} (4,0)--   (4,2)  --   (2,4) --node[auto=right]{$W_{(0,1)}$}  (0,4)--  cycle;
\draw[ultra thick] (0,0) -- (4,0);
\draw[ultra thick] (0,4) -- (2,4) ;
\fill[black](0,2)circle(0.06) node[left]{$V_{(0,1)}$};
\draw (0,0) -- (0,4);
\draw [arrows = {-Stealth[scale=1.5]}] (0,0) -- (0,1.2);
\draw [arrows = {-Stealth[scale=1.5]}] (0,4) -- (0,2.8);
\end{tikzpicture}
\ \ \ 
\begin{tikzpicture}
\draw[dashed](0,0)--  (4,0)--   (4,2)  --   (2,4) node[above right]{$Z_{(0,1)}$}--  (0,4)--  cycle;
\draw[ultra thick] (0,4) -- (2,4) -- (4,2);
\fill[black](0,0)circle(0.06) node[below]{$W_{(0,0)}$};
\fill[black](0,2)circle(0.06) node[left]{$V_{(0,1)}$};
\draw (0,0) -- (0,4);
\draw [arrows = {-Stealth[scale=1.5]}] (0,0) -- (0,1.2);
\draw [arrows = {-Stealth[scale=1.5]}] (0,4) -- (0,2.8);
\end{tikzpicture}
\]

\item $L(-1,0,1) \to L(0,0,1)) \to L(0,0,2))$
\[
\begin{tikzpicture}
\draw[dashed](0,0)--  (4,0)--node[auto=left]{$Z_{(1,0)}$}   (4,2) node[right=10pt]{$\bigotimes$}--   (2,4)--  (0,4)--node[auto=left]{$Z_{(0,0)}$}  cycle;
\draw[ultra thick] (0,0) -- (0,4);
\draw[ultra thick] (4,0) -- (4,2) -- (2,4);
\end{tikzpicture}
\ \ \ 
\begin{tikzpicture}
\draw[dashed](0,0) node[above right]{$W_{(0,0)}$}--  (4,0)-- node[auto=left]{$W_{(1,0)}$}  (4,2) node[right=10pt]{$\longmapsto$}--   (2,4)--node[auto=left]{$W_{(0,1)}$}  (0,4)--  cycle;
\draw[ultra thick] (4,0) -- (4,2);
\draw[ultra thick] (0,4) -- (2,4);
\fill[black](0,0)circle(0.06);
\end{tikzpicture}
\ \ \ 
\begin{tikzpicture}
\draw[dashed](0,0) --  (4,0)-- node[auto=left]{$V_{(2,0)}$}  (4,2)--   (2,4)--  (0,4)--  cycle;
\fill[black](0,0)circle(0.06) node[above right]{$V_{(0,0)}$};
\fill[black](2,4)circle(0.06) node[below]{$V_{(1,1)}$};
\fill[black](2,0)circle(0.06) node[above]{$V_{(1,0)}$};
\fill[black](0,4)circle(0.06) node[below right]{$V_{(0,1)}$};
\draw[ultra thick] (4,0) -- (4,2);
\end{tikzpicture}
\]
For $Z_{(0,0)}\otimes W_{(1,0)}\mapsto V_{(1,0)}$ and $Z_{(1,0)}\otimes W_{(0,0)}\mapsto V_{(1,0)}$ there is non-trivial gradient trees and for the other compositions, there exists the only trivial gradient tree. Non-trivial gradient trees are as following :
\[
\begin{tikzpicture}
\draw[dashed](0,0) --  (4,0)--node[auto=right]{$W_{(1,0)}$}   (4,2)  --   (2,4)--  (0,4)--node[auto=right]{$Z_{(0,0)}$}  cycle;
\draw[ultra thick] (0,0) -- (0,4);
\draw[ultra thick] (4,0) -- (4,2) ;
\fill[black](2,0)circle(0.06) node[below]{$V_{(1,0)}$};
\draw (0,0) -- (4,0);
\draw [arrows = {-Stealth[scale=1.5]}] (0,0) -- (1.2,0);
\draw [arrows = {-Stealth[scale=1.5]}] (4,0) -- (2.8,0);
\end{tikzpicture}
\ \ \ 
\begin{tikzpicture}
\draw[dashed](0,0) --  (4,0)--   (4,2) node[right]{$Z_{(1,0)}$} --   (2,4)--  (0,4)--  cycle;
\draw[ultra thick] (4,0) -- (4,2) -- (2,4);
\fill[black](0,0)circle(0.06) node[below]{$W_{(0,0)}$};
\fill[black](2,0)circle(0.06) node[below]{$V_{(1,0)}$};
\draw (0,0) -- (4,0);
\draw [arrows = {-Stealth[scale=1.5]}] (0,0) -- (1.2,0);
\draw [arrows = {-Stealth[scale=1.5]}] (4,0) -- (2.8,0);
\end{tikzpicture}
\]
\end{itemize}

\subsection{Compositions of morphisms in $Mo_{\cE}(P)$ for $\Bll$.}\label{appendix2}

\begin{itemize}
\item $L(0,0,0,0) \to L(-1,0,0,1) \to  L(0,0,0,1)$\ \par
\[
\begin{tikzpicture}
\draw[dashed](0,1.5)--  (1.5,0) --  (4.5,0)--  (4.5,1.5)node[above=20pt,right=1pt]{$\bigotimes$} -- node[auto=left]{$Z_{(0,1)}$} (1.5,4.5)--  (0,4.5)--  cycle;
\draw[ultra thick] (4.5,0)--(4.5,1.5)--(1.5,4.5)--(0,4.5);
\end{tikzpicture}
\ \ \ 
\begin{tikzpicture}
\draw[dashed](0,1.5)--  (1.5,0) -- node[auto=left]{$W_{(1,0)}$} (4.5,0)--  (4.5,1.5)node[above=20pt,right=1pt]{$\longmapsto$}--  (1.5,4.5)--  (0,4.5)-- node[auto=left]{$W_{(0,0)}$} cycle;
\draw[ultra thick] (0,1.5)--(0,4.5)--(1.5,4.5);
\draw[ultra thick] (1.5,0)--(4.5,0)--(4.5,1.5);
\end{tikzpicture}
\ \ \ 
\begin{tikzpicture}
\draw[dashed](0,1.5)--  (1.5,0) --  (4.5,0)-- node[auto=left]{$V_{(1,1)}$} (4.5,1.5)--  (1.5,4.5)-- node[auto=left]{$V_{(0,1)}$} (0,4.5)--  cycle;
\draw[ultra thick] (4.5,0)--(4.5,1.5);
\draw[ultra thick] (0,4.5)--(1.5,4.5);
\end{tikzpicture}
\]
There is the only trivial gradient tree.

\item $L(0,0,0,0) \to L(0,-1,0,1) \to  L(0,0,0,1)$\ \par
\[
\begin{tikzpicture}
\draw[dashed](0,1.5)--  (1.5,0) --  (4.5,0)--  (4.5,1.5)node[above=20pt,right=1pt]{$\bigotimes$}--  (1.5,4.5)--  (0,4.5)-- node[auto=left]{$Z_{(0,0)}$} cycle;
\draw[ultra thick] (1.5,0)--(0,1.5)--(0,4.5)--(1.5,4.5);
\end{tikzpicture}
\ \ \ 
\begin{tikzpicture}
\draw[dashed](0,1.5)-- node[auto=left]{$W_{(0,0)}$} (1.5,0) --  (4.5,0)--  (4.5,1.5)node[above=20pt,right=1pt]{$\longmapsto$}-- node[auto=left]{$W_{(0,1)}$} (1.5,4.5)--  (0,4.5)--  cycle;
\draw[ultra thick] (0,1.5)--(1.5,0)--(4.5,0);
\draw[ultra thick] (4.5,1.5)--(1.5,4.5)--(0,4.5);
\end{tikzpicture}
\ \ \ 
\begin{tikzpicture}
\draw[dashed](0,1.5)-- node[auto=left]{$V_{(0,0)}$} (1.5,0) --  (4.5,0)--  (4.5,1.5)--  (1.5,4.5)-- node[auto=left]{$V_{(0,1)}$} (0,4.5)--  cycle;
\draw[ultra thick] (0,1.5)--(1.5,0);
\draw[ultra thick] (0,4.5)--(1.5,4.5);
\end{tikzpicture}
\]
There is the only trivial gradient tree.

\item $L(0,0,0,0) \to L(0,0,-1,1) \to  L(0,0,0,1)$\ \par
\[
\begin{tikzpicture}
\draw[dashed](0,1.5)--  (1.5,0) -- node[auto=left]{$Z_{(0,0)}$} (4.5,0)--  (4.5,1.5)node[above=20pt,right=1pt]{$\bigotimes$}--  (1.5,4.5)--  (0,4.5)--  cycle;
\draw[ultra thick] (0,1.5)--(1.5,0)--(4.5,0)--(4.5,1.5);
\end{tikzpicture}
\ \ \ 
\begin{tikzpicture}
\draw[dashed](0,1.5)-- node[auto=left]{$W_{(0,0)}$} (1.5,0) --  (4.5,0)--  (4.5,1.5)node[above=20pt,right=1pt]{$\longmapsto$}-- node[auto=left]{$W_{(1,1)}$} (1.5,4.5)--  (0,4.5)--  cycle;
\draw[ultra thick] (1.5,0)--(0,1.5)--(0,4.5);
\draw[ultra thick] (4.5,0)--(4.5,1.5)--(1.5,4.5);
\end{tikzpicture}
\ \ \ 
\begin{tikzpicture}
\draw[dashed](0,1.5)-- node[auto=left]{$V_{(0,0)}$} (1.5,0) --  (4.5,0)-- node[auto=left]{$V_{(1,1)}$} (4.5,1.5)--  (1.5,4.5)--  (0,4.5)--  cycle;
\draw[ultra thick] (0,1.5)--(1.5,0);
\draw[ultra thick] (4.5,0)--(4.5,1.5);
\end{tikzpicture}
\]
There is the only trivial gradient tree.

\item $L(0,0,0,0) \to L(-1,0,0,1) \to  L(0,0,0,2)$\ \par
\[
\begin{tikzpicture}
\draw[dashed](0,1.5)--  (1.5,0) --  (4.5,0)--  (4.5,1.5)node[above=20pt,right=1pt]{$\bigotimes$} -- node[auto=left]{$Z_{(0,1)}$} (1.5,4.5)--  (0,4.5)--  cycle;
\draw[ultra thick] (4.5,0)--(4.5,1.5)--(1.5,4.5)--(0,4.5);
\end{tikzpicture}
\ \ \ 
\begin{tikzpicture}
\draw[dashed](0,1.5)--  (1.5,0) --  (4.5,0)-- node[auto=left]{$W_{(2,1)}$} (4.5,1.5)node[above=20pt,right=1pt]{$\longmapsto$}--  (1.5,4.5)-- node[auto=left]{$W_{(0,1)}$} (0,4.5)--  cycle;
\draw[ultra thick] (4.5,0)--(4.5,1.5);
\draw[ultra thick] (0,4.5)--(1.5,4.5);
\fill[black](0,1.5)circle(0.06) node[right]{$W_{(0,0)}$};
\fill[black](1.5,0)circle(0.06) node[above right]{$W_{(1,0)}$};
\fill[black](3,3)circle(0.06) node[below]{$W_{(1,1)}$};
\end{tikzpicture}
\ \ \ 
\begin{tikzpicture}
\draw[dashed](0,1.5)--  (1.5,0) --  (4.5,0)-- node[auto=left]{$V_{(2,2)}$} (4.5,1.5)--  (1.5,4.5)-- node[auto=left]{$V_{(0,2)}$} (0,4.5)--  cycle;
\draw[ultra thick] (4.5,0)--(4.5,1.5);
\draw[ultra thick] (0,4.5)--(1.5,4.5);
\fill[black](0,3)circle(0.06) node[right]{$V_{(0,1)}$};
\fill[black](3,0)circle(0.06) node[above]{$V_{(1,1)}$};
\fill[black](3,3)circle(0.06) node[below]{$V_{(1,2)}$};
\end{tikzpicture}
\]
For $Z_{(0,1)}\otimes W_{(j,1)}\mapsto V_{(j,2)}\ (j=0,1,2)$ there is the only trivial gradient tree. For the other compositions, there exists non-trivial gradient trees which are as following :
\[
\begin{tikzpicture}
\draw[dashed](0,1.5)--  (1.5,0) --  (4.5,0)--  (4.5,1.5) -- node[auto=right]{$Z_{(0,1)}$} (1.5,4.5)--  (0,4.5)--  cycle;
\draw[ultra thick] (4.5,0)--(4.5,1.5);
\draw[ultra thick] (0,4.5)--(1.5,4.5);
\fill[black](0,3)circle(0.06) node[left]{$V_{(0,1)}$};
\fill[black](3,0)circle(0.06) node[below]{$V_{(1,1)}$};
\fill[black](0,1.5)circle(0.06) node[left]{$W_{(0,0)}$};
\fill[black](1.5,0)circle(0.06) node[below]{$W_{(1,0)}$};
\draw[ultra thick] (4.5,0)--(4.5,1.5)--(1.5,4.5)--(0,4.5);
\draw (1.5,0) -- (4.5,0);
\draw [arrows = {-Stealth[scale=1.5]}] (1.5,0) -- (2.5,0);
\draw [arrows = {-Stealth[scale=1.5]}] (4.5,0) -- (3.5,0);
\draw (0,1.5) -- (0,4.5);
\draw [arrows = {-Stealth[scale=1.5]}] (0,1.5) -- (0,2.5);
\draw [arrows = {-Stealth[scale=1.5]}] (0,4.5) -- (0,3.5);
\end{tikzpicture}
\]

\item $L(0,0,0,0) \to L(0,-1,0,1) \to  L(0,0,0,2)$\ \par
\[
\begin{tikzpicture}
\draw[dashed](0,1.5)--  (1.5,0) --  (4.5,0)--  (4.5,1.5)node[above=20pt,right=1pt]{$\bigotimes$}--  (1.5,4.5)--  (0,4.5)-- node[auto=left]{$Z_{(0,0)}$} cycle;
\draw[ultra thick] (1.5,0)--(0,1.5)--(0,4.5)--(1.5,4.5);
\end{tikzpicture}
\ \ \ 
\begin{tikzpicture}
\draw[dashed](0,1.5)-- node[auto=left]{$W_{(0,0)}$} (1.5,0) --  (4.5,0)--  (4.5,1.5)node[above=20pt,right=1pt]{$\longmapsto$}--  (1.5,4.5)-- node[auto=left]{$W_{(0,2)}$} (0,4.5)--  cycle;
\draw[ultra thick] (0,1.5)--(1.5,0);
\draw[ultra thick] (0,4.5)--(1.5,4.5);
\fill[black](4.5,1.5)circle(0.06) node[left]{$W_{(1,2)}$};
\fill[black](4.5,0)circle(0.06) node[above left]{$W_{(1,1)}$};
\fill[black](0,3)circle(0.06) node[right]{$W_{(0,1)}$};
\end{tikzpicture}
\ \ \ 
\begin{tikzpicture}
\draw[dashed](0,1.5)-- node[auto=left]{$V_{(0,0)}$} (1.5,0) --  (4.5,0)--  (4.5,1.5)--  (1.5,4.5)-- node[auto=left]{$V_{(0,2)}$} (0,4.5)--  cycle;
\draw[ultra thick] (0,1.5)--(1.5,0);
\draw[ultra thick] (0,4.5)--(1.5,4.5);
\fill[black](0,3)circle(0.06) node[right]{$V_{(0,1)}$};
\fill[black](3,0)circle(0.06) node[above]{$V_{(1,1)}$};
\fill[black](3,3)circle(0.06) node[below]{$V_{(1,2)}$};
\end{tikzpicture}
\]
For $Z_{(0,0)}\otimes W_{(0,j)}\mapsto V_{(0,j)}\ (j=0,1,2)$ there is the only trivial gradient tree. For the other compositions, there exists non-trivial gradient trees which are as following :
\[
\begin{tikzpicture}
\draw[dashed](0,1.5)-- (1.5,0) --  (4.5,0)--  (4.5,1.5)--  (1.5,4.5)--  (0,4.5)-- node[left]{$Z_{(0,0)}$}  cycle;
\draw[ultra thick] (0,1.5)--(1.5,0);
\draw[ultra thick] (0,4.5)--(1.5,4.5);
\fill[black](3,0)circle(0.06) node[below]{$V_{(1,1)}$};
\fill[black](3,3)circle(0.06) node[above right]{$V_{(1,2)}$};
\fill[black](4.5,1.5)circle(0.06) node[right]{$W_{(1,2)}$};
\fill[black](4.5,0)circle(0.06) node[below]{$W_{(1,1)}$};
\draw[ultra thick] (1.5,0)--(0,1.5)--(0,4.5)--(1.5,4.5);
\draw (1.5,0) -- (4.5,0);
\draw [arrows = {-Stealth[scale=1.5]}] (1.5,0) -- (2.5,0);
\draw [arrows = {-Stealth[scale=1.5]}] (4.5,0) -- (3.5,0);
\draw (4.5,1.5) -- (1.5,4.5);
\draw [arrows = {-Stealth[scale=1.5]}] (4.5,1.5) -- (3.5,2.5);
\draw [arrows = {-Stealth[scale=1.5]}] (1.5,4.5) -- (2.5,3.5);
\end{tikzpicture}
\]

\item $L(0,0,0,0) \to L(0,0,-1,1) \to  L(0,0,0,2)$\ \par
\[
\begin{tikzpicture}
\draw[dashed](0,1.5)--  (1.5,0) -- node[auto=left]{$Z_{(0,0)}$} (4.5,0)--  (4.5,1.5)node[above=20pt,right=1pt]{$\bigotimes$}--  (1.5,4.5)--  (0,4.5)--  cycle;
\draw[ultra thick] (0,1.5)--(1.5,0)--(4.5,0)--(4.5,1.5);
\end{tikzpicture}
\ \ \ 
\begin{tikzpicture}
\draw[dashed](0,1.5)-- node[auto=left]{$W_{(0,0)}$} (1.5,0) --  (4.5,0)-- node[auto=left]{$W_{(2,2)}$} (4.5,1.5)node[above=20pt,right=1pt]{$\longmapsto$}--  (1.5,4.5)--  (0,4.5)--  cycle;
\draw[ultra thick] (0,1.5)--(1.5,0);
\draw[ultra thick] (4.5,0)--(4.5,1.5);
\fill[black](1.5,4.5)circle(0.06) node[below]{$W_{(1,2)}$};
\fill[black](0,4.5)circle(0.06) node[below right]{$W_{(0,1)}$};
\fill[black](3,0)circle(0.06) node[above]{$W_{(1,1)}$};
\end{tikzpicture}
\ \ \ 
\begin{tikzpicture}
\draw[dashed](0,1.5)-- node[auto=left]{$V_{(0,0)}$} (1.5,0) --  (4.5,0)-- node[auto=left]{$V_{(2,2)}$} (4.5,1.5)--  (1.5,4.5)--  (0,4.5)--  cycle;
\draw[ultra thick] (0,1.5)--(1.5,0);
\draw[ultra thick] (4.5,0)--(4.5,1.5);
\fill[black](0,3)circle(0.06) node[right]{$V_{(0,1)}$};
\fill[black](3,0)circle(0.06) node[above]{$V_{(1,1)}$};
\fill[black](3,3)circle(0.06) node[below]{$V_{(1,2)}$};
\end{tikzpicture}
\]
For $Z_{(0,0)}\otimes W_{(j,j)}\mapsto V_{(j,j)}\ (j=0,1,2)$ there is the only trivial gradient tree. For the other compositions, there exists non-trivial gradient trees which are as following :
\[
\begin{tikzpicture}
\draw[dashed](0,1.5)--  (1.5,0) -- node[auto=right]{$Z_{(0,0)}$} (4.5,0)--  (4.5,1.5)--  (1.5,4.5)--  (0,4.5)--  cycle;
\draw[ultra thick] (0,1.5)--(1.5,0);
\draw[ultra thick] (4.5,0)--(4.5,1.5);
\fill[black](0,3)circle(0.06) node[left]{$V_{(0,1)}$};
\fill[black](3,3)circle(0.06) node[above right]{$V_{(1,2)}$};
\fill[black](1.5,4.5)circle(0.06) node[right]{$W_{(1,2)}$};
\fill[black](0,4.5)circle(0.06) node[left]{$W_{(0,1)}$};
\draw[ultra thick] (0,1.5)--(1.5,0)--(4.5,0)--(4.5,1.5);
\draw (0,1.5) -- (0,4.5);
\draw [arrows = {-Stealth[scale=1.5]}] (0,1.5) -- (0,2.5);
\draw [arrows = {-Stealth[scale=1.5]}] (0,4.5) -- (0,3.5);
\draw (4.5,1.5) -- (1.5,4.5);
\draw [arrows = {-Stealth[scale=1.5]}] (4.5,1.5) -- (3.5,2.5);
\draw [arrows = {-Stealth[scale=1.5]}] (1.5,4.5) -- (2.5,3.5);
\end{tikzpicture}
\]

\item $L(0,0,0,0) \to L(0,0,0,1)  \to  L(0,0,0,2)$\ \par
\[
\begin{tikzpicture}
\draw[dashed](0,1.5)-- node[auto=left]{$Z_{(0,0)}$} (1.5,0) --  (4.5,0)-- node[auto=left]{$Z_{(1,1)}$} (4.5,1.5)node[above=20pt,right=1pt]{$\bigotimes$}--  (1.5,4.5)-- node[auto=left]{$Z_{(0,1)}$} (0,4.5)--  cycle;
\draw[ultra thick] (0,1.5)--(1.5,0);
\draw[ultra thick] (4.5,0)--(4.5,1.5);
\draw[ultra thick] (0,4.5)--(1.5,4.5);
\end{tikzpicture}
\ \ \ 
\begin{tikzpicture}
\draw[dashed](0,1.5)-- node[auto=left]{$W_{(0,0)}$} (1.5,0) --  (4.5,0)-- node[auto=left]{$W_{(1,1)}$} (4.5,1.5)node[above=20pt,right=1pt]{$\longmapsto$}--  (1.5,4.5)-- node[auto=left]{$W_{(0,1)}$} (0,4.5)--  cycle;
\draw[ultra thick] (0,1.5)--(1.5,0);
\draw[ultra thick] (4.5,0)--(4.5,1.5);
\draw[ultra thick] (0,4.5)--(1.5,4.5);
\end{tikzpicture}
\ \ \ 
\begin{tikzpicture}
\draw[dashed](0,1.5)-- node[auto=left]{$V_{(0,0)}$} (1.5,0) --  (4.5,0)-- node[auto=left]{$V_{(2,2)}$} (4.5,1.5)--  (1.5,4.5)-- node[auto=left]{$V_{(0,2)}$} (0,4.5)--  cycle;
\draw[ultra thick] (0,1.5)--(1.5,0);
\draw[ultra thick] (4.5,0)--(4.5,1.5);
\draw[ultra thick] (0,4.5)--(1.5,4.5);
\fill[black](0,3)circle(0.06) node[right]{$V_{(0,1)}$};
\fill[black](3,0)circle(0.06) node[above]{$V_{(1,1)}$};
\fill[black](3,3)circle(0.06) node[below]{$V_{(1,2)}$};
\end{tikzpicture}
\]
For $Z_{K}\otimes W_{K}\mapsto V_{2K}\ (K=(0,0),(0,1),(1,1))$ there is the only trivial gradient tree. For the other compositions, there exists non-trivial gradient trees which are as following :
\[
\begin{tikzpicture}
\draw[dashed](0,1.5)-- node[auto=right]{$Z_{(0,0)},W_{(0,0)}$} (1.5,0) --  (4.5,0)-- node[auto=right]{$Z_{(1,1)},W_{(1,1)}$} (4.5,1.5)--  (1.5,4.5)-- node[auto=right]{$Z_{(0,1)},W_{(0,1)}$} (0,4.5)--  cycle;
\draw[ultra thick] (0,1.5)--(1.5,0);
\draw[ultra thick] (4.5,0)--(4.5,1.5);
\draw[ultra thick] (0,4.5)--(1.5,4.5);
\fill[black](0,3)circle(0.06) node[left]{$V_{(0,1)}$};
\fill[black](3,0)circle(0.06) node[below]{$V_{(1,1)}$};
\fill[black](3,3)circle(0.06) node[above right]{$V_{(1,2)}$};
\draw (4.5,1.5) -- (1.5,4.5);
\draw [arrows = {-Stealth[scale=1.5]}] (4.5,1.5) -- (3.5,2.5);
\draw [arrows = {-Stealth[scale=1.5]}] (1.5,4.5) -- (2.5,3.5);
\draw (0,1.5) -- (0,4.5);
\draw [arrows = {-Stealth[scale=1.5]}] (0,1.5) -- (0,2.5);
\draw [arrows = {-Stealth[scale=1.5]}] (0,4.5) -- (0,3.5);
\draw (1.5,0) -- (4.5,0);
\draw [arrows = {-Stealth[scale=1.5]}] (1.5,0) -- (2.5,0);
\draw [arrows = {-Stealth[scale=1.5]}] (4.5,0) -- (3.5,0);
\end{tikzpicture}
\]

\item $L(-1,0,0,1) \to L(0,0,0,1) \to  L(0,0,0,2)$\ \par
\[
\begin{tikzpicture}
\draw[dashed](0,1.5)--  (1.5,0) -- node[auto=left]{$Z_{(1,0)}$} (4.5,0)--  (4.5,1.5)node[above=20pt,right=1pt]{$\bigotimes$}--  (1.5,4.5)--  (0,4.5)-- node[auto=left]{$Z_{(0,0)}$} cycle;
\draw[ultra thick] (0,1.5)--(0,4.5)--(1.5,4.5);
\draw[ultra thick] (1.5,0)--(4.5,0)--(4.5,1.5);
\end{tikzpicture}
\ \ \ 
\begin{tikzpicture}
\draw[dashed](0,1.5)-- node[auto=left]{$W_{(0,0)}$} (1.5,0) --  (4.5,0)-- node[auto=left]{$W_{(1,1)}$} (4.5,1.5)node[above=20pt,right=1pt]{$\longmapsto$}--  (1.5,4.5)-- node[auto=left]{$W_{(0,1)}$} (0,4.5)--  cycle;
\draw[ultra thick] (0,1.5)--(1.5,0);
\draw[ultra thick] (4.5,0)--(4.5,1.5);
\draw[ultra thick] (0,4.5)--(1.5,4.5);
\end{tikzpicture}
\ \ \ 
\begin{tikzpicture}
\draw[dashed](0,1.5)--  (1.5,0) --  (4.5,0)-- node[auto=left]{$V_{(2,1)}$} (4.5,1.5)--  (1.5,4.5)-- node[auto=left]{$V_{(0,1)}$} (0,4.5)--  cycle;
\draw[ultra thick] (4.5,0)--(4.5,1.5);
\draw[ultra thick] (0,4.5)--(1.5,4.5);
\fill[black](0,1.5)circle(0.06) node[right]{$V_{(0,0)}$};
\fill[black](1.5,0)circle(0.06) node[above right]{$V_{(1,0)}$};
\fill[black](3,3)circle(0.06) node[below]{$V_{(1,1)}$};
\end{tikzpicture}
\]
For $Z_{(0,0)}\otimes W_{(1,1)}\mapsto V_{(1,1)}$ and $Z_{(1,0)}\otimes W_{(0,1)}\mapsto V_{(1,1)}$ there is a non-trivial gradient tree and for the other compositions there exists the only trivial gradient tree. Non-trivial gradient trees are as following :
\[
\begin{tikzpicture}
\draw[dashed](0,1.5)--  (1.5,0) --  (4.5,0)-- node[auto=right]{$W_{(1,1)}$}  (4.5,1.5)--  (1.5,4.5)--  (0,4.5)-- node[auto=right]{$Z_{(0,0)}$} cycle;
\draw[ultra thick] (0,1.5)--(0,4.5)--(1.5,4.5);
\draw[ultra thick] (4.5,0)--(4.5,1.5);
\fill[black](3,3)circle(0.06) node[above right]{$V_{(1,1)}$};
\draw (4.5,1.5) -- (1.5,4.5);
\draw [arrows = {-Stealth[scale=1.5]}] (4.5,1.5) -- (3.5,2.5);
\draw [arrows = {-Stealth[scale=1.5]}] (1.5,4.5) -- (2.5,3.5);
\end{tikzpicture}
\ \ \ 
\begin{tikzpicture}
\draw[dashed](0,1.5)--  (1.5,0) -- node[auto=right]{$Z_{(1,0)}$} (4.5,0)--  (4.5,1.5)--  (1.5,4.5)-- node[auto=right]{$W_{(0,1)}$}  (0,4.5)--  cycle;
\draw[ultra thick] (1.5,0)--(4.5,0)--(4.5,1.5);
\draw[ultra thick] (0,4.5)--(1.5,4.5);
\fill[black](3,3)circle(0.06) node[above right]{$V_{(1,1)}$};
\draw (4.5,1.5) -- (1.5,4.5);
\draw [arrows = {-Stealth[scale=1.5]}] (4.5,1.5) -- (3.5,2.5);
\draw [arrows = {-Stealth[scale=1.5]}] (1.5,4.5) -- (2.5,3.5);
\end{tikzpicture}
\]

\item $L(0,-1,0,1) \to L(0,0,0,1) \to  L(0,0,0,2)$\ \par
\[
\begin{tikzpicture}
\draw[dashed](0,1.5)-- node[auto=left]{$Z_{(0,0)}$} (1.5,0) --  (4.5,0)--  (4.5,1.5)node[above=20pt,right=1pt]{$\bigotimes$}-- node[auto=left]{$Z_{(0,1)}$} (1.5,4.5)--  (0,4.5)--  cycle;
\draw[ultra thick] (0,1.5)--(1.5,0)--(4.5,0);
\draw[ultra thick] (4.5,1.5)--(1.5,4.5)--(0,4.5);
\end{tikzpicture}
\ \ \ 
\begin{tikzpicture}
\draw[dashed](0,1.5)-- node[auto=left]{$W_{(0,0)}$} (1.5,0) --  (4.5,0)-- node[auto=left]{$W_{(1,1)}$} (4.5,1.5)node[above=20pt,right=1pt]{$\longmapsto$}--  (1.5,4.5)-- node[auto=left]{$W_{(0,1)}$} (0,4.5)--  cycle;
\draw[ultra thick] (0,1.5)--(1.5,0);
\draw[ultra thick] (4.5,0)--(4.5,1.5);
\draw[ultra thick] (0,4.5)--(1.5,4.5);
\end{tikzpicture}
\ \ \ 
\begin{tikzpicture}
\draw[dashed](0,1.5)-- node[auto=left]{$V_{(0,0)}$} (1.5,0) --  (4.5,0)--  (4.5,1.5)--  (1.5,4.5)-- node[auto=left]{$V_{(0,2)}$} (0,4.5)--  cycle;
\draw[ultra thick] (0,1.5)--(1.5,0);
\draw[ultra thick] (0,4.5)--(1.5,4.5);
\fill[black](4.5,1.5)circle(0.06) node[left]{$V_{(1,2)}$};
\fill[black](4.5,0)circle(0.06) node[above left]{$V_{(1,1)}$};
\fill[black](0,3)circle(0.06) node[right]{$V_{(0,1)}$};
\end{tikzpicture}
\]
For $Z_{(0,0)}\otimes W_{(0,1)}\mapsto V_{(0,1)}$ and $Z_{(0,1)}\otimes W_{(0,0)}\mapsto V_{(0,1)}$ there is a non-trivial gradient tree and for the other compositions there exists the only trivial gradient tree. Non-trivial gradient trees are as following :
\[
\begin{tikzpicture}
\draw[dashed](0,1.5)-- node[auto=right]{$Z_{(0,0)}$} (1.5,0) --  (4.5,0)--  (4.5,1.5)--  (1.5,4.5)--node[auto=right]{$W_{(0,1)}$}  (0,4.5)--  cycle;
\draw[ultra thick] (0,1.5)--(1.5,0)--(4.5,0);
\draw[ultra thick] (0,4.5)--(1.5,4.5);
\fill[black](0,3)circle(0.06) node[left]{$V_{(0,1)}$};
\draw (0,1.5) -- (0,4.5);
\draw [arrows = {-Stealth[scale=1.5]}] (0,1.5) -- (0,2.5);
\draw [arrows = {-Stealth[scale=1.5]}] (0,4.5) -- (0,3.5);
\end{tikzpicture}
\ \ \ 
\begin{tikzpicture}
\draw[dashed](0,1.5)--node[auto=right]{$W_{(0,0)}$} (1.5,0) --  (4.5,0)--  (4.5,1.5)-- node[auto=right]{$Z_{(0,1)}$} (1.5,4.5)--  (0,4.5)--  cycle;
\draw[ultra thick] (4.5,1.5)--(1.5,4.5)--(0,4.5);
\draw[ultra thick] (0,1.5)--(1.5,0);
\fill[black](0,3)circle(0.06) node[left]{$V_{(0,1)}$};
\draw (0,1.5) -- (0,4.5);
\draw [arrows = {-Stealth[scale=1.5]}] (0,1.5) -- (0,2.5);
\draw [arrows = {-Stealth[scale=1.5]}] (0,4.5) -- (0,3.5);
\end{tikzpicture}
\]

\item $L(0,0,-1,1) \to L(0,0,0,1) \to  L(0,0,0,2)$\ \par
\[
\begin{tikzpicture}
\draw[dashed](0,1.5)-- node[auto=left]{$Z_{(0,0)}$} (1.5,0) --  (4.5,0)--  (4.5,1.5)node[above=20pt,right=1pt]{$\bigotimes$}-- node[auto=left]{$Z_{(1,1)}$} (1.5,4.5)--  (0,4.5)--  cycle;
\draw[ultra thick] (1.5,0)--(0,1.5)--(0,4.5);
\draw[ultra thick] (4.5,0)--(4.5,1.5)--(1.5,4.5);
\end{tikzpicture}
\ \ \ 
\begin{tikzpicture}
\draw[dashed](0,1.5)-- node[auto=left]{$W_{(0,0)}$} (1.5,0) --  (4.5,0)-- node[auto=left]{$W_{(1,1)}$} (4.5,1.5)node[above=20pt,right=1pt]{$\longmapsto$}--  (1.5,4.5)-- node[auto=left]{$W_{(0,1)}$} (0,4.5)--  cycle;
\draw[ultra thick] (0,1.5)--(1.5,0);
\draw[ultra thick] (4.5,0)--(4.5,1.5);
\draw[ultra thick] (0,4.5)--(1.5,4.5);
\end{tikzpicture}
\ \ \ 
\begin{tikzpicture}
\draw[dashed](0,1.5)-- node[auto=left]{$V_{(0,0)}$} (1.5,0) --  (4.5,0)-- node[auto=left]{$V_{(2,2)}$} (4.5,1.5)--  (1.5,4.5)--  (0,4.5)--  cycle;
\draw[ultra thick] (0,1.5)--(1.5,0);
\draw[ultra thick] (4.5,0)--(4.5,1.5);
\fill[black](1.5,4.5)circle(0.06) node[below]{$V_{(1,2)}$};
\fill[black](0,4.5)circle(0.06) node[below right]{$V_{(0,1)}$};
\fill[black](3,0)circle(0.06) node[above]{$V_{(1,1)}$};
\end{tikzpicture}
\]
For $Z_{(0,0)}\otimes W_{(1,1)}\mapsto V_{(1,1)}$ and $Z_{(1,1)}\otimes W_{(0,0)}\mapsto V_{(1,1)}$ there are non-trivial gradient trees and for the other compositions there exists the only trivial gradient tree. Non-trivial trees are as following :
\[
\begin{tikzpicture}
\draw[dashed](0,1.5)-- node[auto=right]{$Z_{(0,0)}$} (1.5,0) --  (4.5,0)-- node[auto=right]{$W_{(1,1)}$} (4.5,1.5)--  (1.5,4.5)--  (0,4.5)--  cycle;
\draw[ultra thick] (1.5,0)--(0,1.5)--(0,4.5);
\draw[ultra thick] (4.5,0)--(4.5,1.5);
\fill[black](3,0)circle(0.06) node[above]{$V_{(1,1)}$};
\draw (1.5,0) -- (4.5,0);
\draw [arrows = {-Stealth[scale=1.5]}] (1.5,0) -- (2.5,0);
\draw [arrows = {-Stealth[scale=1.5]}] (4.5,0) -- (3.5,0);
\end{tikzpicture}
\ \ \ 
\begin{tikzpicture}
\draw[dashed](0,1.5)-- node[auto=right]{$W_{(0,0)}$}  (1.5,0) --  (4.5,0)--  (4.5,1.5)-- node[auto=right]{$Z_{(1,1)}$} (1.5,4.5)--  (0,4.5)--  cycle;
\draw[ultra thick] (4.5,0)--(4.5,1.5)--(1.5,4.5);
\draw[ultra thick] (0,1.5)--(1.5,0);
\fill[black](3,0)circle(0.06) node[above]{$V_{(1,1)}$};
\draw (1.5,0) -- (4.5,0);
\draw [arrows = {-Stealth[scale=1.5]}] (1.5,0) -- (2.5,0);
\draw [arrows = {-Stealth[scale=1.5]}] (4.5,0) -- (3.5,0);
\end{tikzpicture}
\]
\end{itemize}

\ \par
{\bf Declarations of interest} : none

\end{document}